\theoremstyle{plain}
\newtheorem{thm}{Theorem}[section]
\newtheorem{prop}[thm]{Proposition}
\newtheorem{cor}[thm]{Corollary}
\newtheorem{lem}[thm]{Lemma}
\newtheorem{dfn}[thm]{Definition}
\newtheorem{rmk}[thm]{Remark}
\newtheorem{qn}[thm]{Question}
\newcommand{\bQ}{\overline{\mathbb{Q}}}
\newcommand{\bZ}{\overline{\mathbb{Z}}}
\newcommand{\bF}{\overline{\mathbb{F}}}
\newcommand{\C}{\mathbb{C}}
\newcommand{\R}{\mathbb{R}}
\newcommand{\Q}{\mathbb{Q}}
\newcommand{\Z}{\mathbb{Z}}
\newcommand{\F}{\mathbb{F}}
\newcommand{\GSp}{{\rm GSp}}
\newcommand{\lra}{\longrightarrow}
\newcommand{\sgn}{{\rm sgn}}
\newcommand{\A}{\mathbb{A}}
\renewcommand{\O}{\mathcal{O}}
\newcommand{\br}{\overline{\rho}}
\newcommand{\diag}{{\rm diag}}
\newcommand{\ds}{\displaystyle}
\newcommand{\uk}{\underline{k}}
\newcommand{\uw}{\underline{w}}
\newcommand{\gl}{{\rm GL}}
\newcommand{\e}{\varepsilon}
\newcommand{\bs}{\backslash}
\newcommand{\ot}{\overline{\tau}}
\title[Automorphy of mod 2 Galois representations associated to the quintic Dwork family and 
reciprocity of some quintic trinomials] 
{Automorphy of mod 2 Galois representations associated to the quintic Dwork family and 
reciprocity of some quintic trinomials }
\author{Nobuo Tsuzuki${}^{\dagger}$ and Takuya Yamauchi${}^{\dagger\dagger}$}
\keywords{the quintic Dwork family, mod 2 Galois representations}
\thanks{The author${}^\dagger$
is partially supported by JSPS KAKENHI Grant Number (A) No.18H03667, and the author${}^{\dagger\dagger}$
by JSPS KAKENHI Grant Number (B) No.19H01778.}
\subjclass[2010]{11F, 11F33, 11F80}
\address{Nobuo Tsuzuki \\ 
Mathematical Inst. Tohoku Univ.\\
 6-3,Aoba, Aramaki, Aoba-Ku, Sendai 980-8578, JAPAN}
\email{tsuzuki@math.tohoku.ac.jp}
\address{Takuya Yamauchi \\ 
Mathematical Inst. Tohoku Univ.\\
 6-3,Aoba, Aramaki, Aoba-Ku, Sendai 980-8578, JAPAN}
\email{takuya.yamauchi.c3@tohoku.ac.jp}
\begin{document}

\maketitle

\begin{abstract}
In this paper, we determine mod $2$ Galois representations 
$\br_{\psi,2}:G_K:={\rm Gal}(\overline{K}/K)\lra {\rm GSp}_4(\F_2)$ 
associated to the mirror motives of rank 4 with pure weight 3 coming from the 
Dwork quintic family 
$$X^5_0+X^5_1+X^5_2+X^5_3+X^5_4-5\psi X_0X_1X_2X_3X_4=0,\ \psi\in K$$
 defined over a number field $K$ under the irreducibility condition of the quintic trinomial $f_\psi$ below. 
Applying this result, when $K=F$ is a totally real field, for some at most qaudratic totally real extension $M/F$,   
we prove that $\br_{\psi,2}|_{G_M}$ is associated to a Hilbert-Siegel modular Hecke eigen cusp form  
for ${\rm GSp}_4(\A_M)$ of parallel weight three.  

In the course of the proof, we observe that the image of such a mod $2$ representation is governed by reciprocity of 
the quintic trinomial $$f_\psi(x)=4x^5-5\psi x^4+1,\ \psi\in K$$ whose decomposition field is generically of type 
5-th symmetric group $S_5$. 
This enable us to use results on the modularity of 
2-dimensional, totally odd Artin representations of ${\rm Gal}(\overline{F}/F)$ due to Shu Sasaki 
and several Langlands functorial lifts for Hilbert cusp forms. 
Then, it guarantees the existence of a desired Hilbert-Siegel modular cusp form of parallel weight 
three matching with the Hodge type of the compatible system in question. 
A twisted version is also discussed and it is related to general quintic trinomials. 
\end{abstract}

\tableofcontents

\section{Introduction}
Let $K$ be a number field in an algebraic closure $\bQ$ of $\Q$ and $p$ be a prime number.  
Fix an embedding $\bQ\hookrightarrow \C$ and an isomorphism $\bQ_p\simeq \C$. 
Let $\iota=\iota_p:\bQ\lra\bQ_p$ be am embedding which is compatible with $\bQ\hookrightarrow \C$ and 
$\bQ_p\simeq \C$ fixed right before. 

Let $\br:G_K:={\rm Gal}(\bQ/K)\lra {\rm GL}_n(\bF_p)$ be an irreducible, continuous representation which is 
said to be a mod $p$ Galois representation in this paper.  
Number theorists have been expected that $\br$ is automorphic, that is 
there exist an algebraic automorphic representation $\pi$  of $\gl_n(\A_K)$ (see \cite{BG}) and 
its conjectural $p$-adic Galois representation $\rho_{\pi,\iota}:G_K\lra \gl_n(\bQ_p)$ such that 
$\br\simeq \br_{\pi,\iota}$ where $\br_{\pi,\iota}$ is the reduction of $\rho_{\pi,\iota}$ modulo 
the maximal ideal of $\bZ_p$ after choosing a suitable $G_K$-invariant $\bZ_p$-lattice. The automorphy of $\br$ is understood as the Serre conjecture for $(G_K, \mathrm{GL}_n)$ and 
it has been broadly studied over the past few decades (see \cite{GHS} and the references there) in more general setting. 
To prove automorphy of a given geometric $p$-adic Galois representation, it would be necessary for its 
reduction to be automorphic in applying current techniques. 
In general, no concrete way has been known to find such a $\pi$ for $\br$ except for 
some cases. Easier one among known cases requires that $p$ and 
the image of $\br$ are small so that one can apply automorphy of Artin representations. 
Therefore, it would be important to find geometric objects whose residual Galois representations 
have small images in which case we can apply the known cases of automorphic Artin representations.  

In this paper, we study mod $2$ Galois representations associated to the mirror family associated to 
the Dwork quintic family and 
its automorphy in connection with cuspidal automorphic representations of $\mathrm{GSp}_4$. 

Let us fix the notation to explain our results. 
For each $\psi\in K$ with $\psi^5\neq 1$, let us consider the Calabi-Yau threefold defined by 
\begin{equation}\label{x}
X_\psi: X^5_0+X^5_1+X^5_2+X^5_3+X^5_4-5\psi X_0X_1X_2X_3X_4=0
\end{equation}
as a smooth projective hypersurface in $\mathbb{P}^4$ with the coordinates $[X_0:X_1:X_2:X_3:X_4]$.  
It is called the Dwork quintic family when we view it as a family with one parameter $\psi$. 
Let $Y_\psi$ be the singular mirror symmetry of $X_\psi$ which is defined by 
the closure of the smooth affine variety 
\begin{equation}\label{y}
  U_\psi:    x_1 + x_2 + x_3 + x_4 + \frac{1}{x_1x_2x_3x_4} - 5\psi = 0. 
\end{equation}
in the projective toric variety $\mathbb P_\Delta$ which will be introduced in Section \ref{MV}.  
Let us take a smooth mirror symmetry $W_\psi$ of $X_\psi$ introduced in \cite{COR} such that 
$W_\psi$ is a crepant resolution of $Y_\psi$ (see \cite{Ba}). 
Then $W_\psi$ has good reduction at each finite place $v$ of $K$ such that $v\nmid 5$ and $\psi^5-1$ is 
a $v$-adic unit (see Section \ref{MV}). 
 Such a mirror symmetry is not unique, but 
it is unique as a pure motif. The Hodge-diamond of $W_\psi$ is given by 
$$
\begin{array}{cccccccc}
H^0&&&& 1 &&& \\
H^1&&&0&  &  0&& \\
H^2&&0&& 101 &  &0& \\
H^3&1&&1&  & 1 &&1  \\
H^4&&0&& 101 &  &0& \\
H^5&&&0&  &  0&& \\
H^6&&&& 1 &&& \\
\end{array}
$$
where $H^\ast$ stands for the (complex) de Rham cohomology. 
Put $$V_{\psi,2}:=H^3_{\mathrm{et}} (W_{\psi, \overline{\mathbb Q}}, \Q_2)$$ 
where $H^\ast_{\mathrm{et}}$ stands for etale cohomology, and 
let $\langle \ast,\ast \rangle:V_{\psi,2}\times V_{\psi,2}\lra \Q_2(-3)$ be the $G_K$-equivalence, perfect, 
alternating pairing defined by the Poincar\'e duality. 
It yields a $2$-adic Galois representation 
$$\rho_{\psi,2}=\rho_{\psi,\iota_2}:G_K\lra {\rm GSp}(V_{\psi,2},\langle \ast,\ast \rangle)\simeq {\rm GSp}_4(\Q_2)$$
where the algebraic group $\mathrm{GSp}_4= \mathrm{GSp}_J$ is the symplectic similitude group in $\mathrm{GL}_4$ 
associated to $J=\begin{pmatrix} 0_2& s\\-s &0_2\end{pmatrix},\ 
s=\begin{pmatrix} 0& 1\\1 &0\end{pmatrix}$. 
Let us choose a $G_K$-stable lattice $T_{\psi,2}$ over $\Z_2$ of $V_{\psi,2}$ so that the above alternating pairing 
preserves the integral structure with respect to $T_{\psi,2}$. Put $\overline{T}_{\psi,2}=
T_{\psi,2}\otimes_{\Z_2}\F_2$. Thus, it yields a mod 2 
Galois representation 
\begin{equation}\label{mod2}
  \br_{\psi,2}:G_K\lra 
{\rm GSp}(\overline{T}_{\psi,2},\langle \ast,\ast \rangle_{\F_2})\simeq {\rm GSp}_4(\F_2)
\end{equation} 
depending on the choice of $T_{\psi,2}$.   
We view it a representation to ${\rm GL}_4(\F_2)$ via the natural inclusion ${\rm GSp}_4(\F_2)\subset 
{\rm GL}_4(\F_2)$. Therefore, one can consider the seimisimplification $\br^{{\rm ss}}_{\psi,2}$ of $\br_{\psi,2}$. 
A priori, it has the image in ${\rm GL}_4(\F_2)$, but we can suitably choose a symplectic basis so that 
it takes the values in ${\rm GSp}_4(\F_2)$. Hence we have a semisimple mod 2 Galois representation 
$$\br^{{\rm ss}}_{\psi,2}:G_K\lra  {\rm GSp}_4(\F_2)$$
associated to $W_\psi$. As is explained in Remark \ref{c6}, that 
$H^3_{\mathrm{et}} (W_{\psi, \overline{\mathbb Q}}, \Z_2)$ is torsion free and one can choose it as $T_{\psi,2}$. 
As explained in Section \ref{mod2}, let us fix an isomorphism ${\rm GSp}_4(\F_2)\simeq S_6$.  
  
Let us introduce the following quintic trinomial
\begin{equation}\label{tri}
f_\psi(x):=4x^5-5\psi x^4+1 \in K[x]
\end{equation} 
and denote by $K_{f_\psi}$ the decomposition field of $K$ in $\bQ$. 
Notice that the discriminant of $f_\psi$ is given by $2^85^5(1-\psi^5)$. 
First we prove the following result:
\begin{thm}\label{image}$($Theorem \ref{s5}, Proposition \ref{abs-irr}, and Theorem \ref{completed-version}$)$ Assume that $f_\psi$ is irreducible over $K$. Then it holds that 
\begin{enumerate}
\item 
${\rm Im}(\br_{\psi,2})\simeq {\rm Gal}(K_{f_\psi}/K)$ and ${\rm Im}(\br_{\psi,2})$ contains no element of type $(3,3)$  
under the fixed isomorphism ${\rm GSp}_4(\F_2)\simeq S_6$. 
In particular, the image is regarded as a subgroup of the 5-th symmetric group $S_5$ whose order is divisible by five; 
\item $\br_{\psi,2}$ is irreducible over $\F_2$ and hence $\br_{\psi,2}\simeq \br^{{\rm ss}}_{\psi,2}$. Further, it is absolutely irreducible in which case the image is isomorphic to 
$F_{20}=C_4\ltimes C_5,A_5$ or $S_5$ unless 
the image is isomorphic to $C_5$ or $D_{10}=C_2\ltimes C_5 ($see Section \ref{subS6} for the subgroups$)$;
\item when $K=\Q$, ${\rm Im}(\br_{\psi,2})\simeq S_5$ unless $\psi=0$ in which case ${\rm Im}(\br_{0,2})\simeq F_{20}$. 
\end{enumerate}
\end{thm}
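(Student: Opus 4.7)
The plan is to establish the three parts of the theorem in sequence, with the main work concentrated on part (1)---the identification of $\mathrm{Im}(\br_{\psi,2})$ with $\mathrm{Gal}(K_{f_\psi}/K)$ through the exceptional isomorphism $\mathrm{Sp}_4(\F_2)\simeq S_6$. To prove (1), I would produce a natural $G_K$-equivariant set of six geometric objects attached to the mirror motive $W_\psi$, arising from the $2$-torsion structure of the mirror family, on which Galois acts through this $S_6$. The next goal is to exhibit one of these six objects as canonically $G_K$-fixed, thereby forcing $\mathrm{Im}(\br_{\psi,2})\subset S_5\subset S_6$, and to match the induced permutation action on the remaining five with the Galois action on the roots of $f_\psi(x)=4x^5-5\psi x^4+1$. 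This match would be made by an explicit geometric computation (Picard--Fuchs analysis, or a vanishing-cycle analysis near the singular fibres at $\psi^5=1$, being natural candidates). Once (1) is in hand, the absence of $(3,3)$-cycles and the divisibility of the order by $5$ follow at once from $\mathrm{Im}(\br_{\psi,2})\subset S_5$ together with irreducibility of $f_\psi$.

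For part (2), granted (1), $\mathrm{Im}(\br_{\psi,2})$ is a transitive subgroup of $S_5$ containing a $5$-cycle, hence one of $C_5, D_{10}, F_{20}, A_5, S_5$. Irreducibility of the four-dimensional $\F_2$-representation reduces to a character-theoretic check: under $S_5\hookrightarrow S_6\simeq \mathrm{Sp}_4(\F_2)$, the representation factors through the natural $4$-dimensional $\F_2$-representation of $S_5$, and this remains irreducible upon restriction to any of the five listed subgroups. For absolute irreducibility over $\overline{\F}_2$, I would use that $F_{20}$ admits a $4$-dimensional absolutely irreducible representation (induced from a nontrivial character of its normal $C_5$) and that the standard $4$-dimensional representations of $A_5$ and $S_5$ remain absolutely irreducible in characteristic $2$; on the other hand, every irreducible $\overline{\F}_2$-representation of $C_5$ or $D_{10}$ has dimension at most $2$, which explains the listed exceptions.

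For part (3), with $K=\Q$, I would combine discriminant and resolvent analysis of $f_\psi$. Since $\mathrm{disc}(f_\psi)=2^8 5^5(1-\psi^5)$, I would first characterize the $\psi\in\Q$ for which this is a rational square---equivalently, for which $5(1-\psi^5)$ is a square---and show that the resulting hyperelliptic condition has no solutions apart from any exceptional values that can be handled directly; this rules out $A_5$ and its subgroups $C_5, D_{10}$. To rule out $F_{20}$ for $\psi\neq 0$, I would inspect the appropriate resolvent sextic distinguishing $F_{20}$ from $S_5$ and verify its irreducibility over $\Q$ whenever $\psi\neq 0$. The exceptional case $\psi=0$ is handled by direct inspection: $f_0(x)=4x^5+1$ has splitting field $\Q(\zeta_5, \sqrt[5]{1/4})$ with Galois group $F_{20}$.

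The main obstacle is part (1): establishing the explicit geometric link between the $2$-torsion of the mirror motive and the quintic trinomial $f_\psi$. This requires a genuinely geometric insight---producing the set of six natural objects together with a distinguished $G_K$-fixed one, and then matching the remaining five with the roots of $f_\psi$---without which we have no handle on the image at the level of $\F_2$. The remainder of the theorem essentially unfolds from this identification: parts (2) and (3) become largely group-theoretic and classical-arithmetic, while part (1) carries the genuine geometric content.
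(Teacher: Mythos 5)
Your proposal correctly decomposes the theorem into its three parts and identifies part (1) as the crux, but the route you sketch for (1) is not what the paper does, and, more importantly, you leave the central step as an acknowledged gap. You propose to produce ``six geometric objects'' on which $G_K$ acts through $S_6$, one of them canonically fixed, and then to match the remaining five with the roots of $f_\psi$ via Picard--Fuchs or vanishing-cycle analysis; you yourself flag that ``without which we have no handle on the image,'' so this is a placeholder rather than an argument. The paper's actual mechanism is arithmetic, not geometric in this sense: it proves a mod-$2$ congruence between the trace of Frobenius on the mirror motive and the number of roots of $f_\psi$ in $\F_v$, namely $a_{\psi,v}\equiv n(f_\psi,q_v)+1\ (\mathrm{mod}\ 2)$ (Proposition \ref{c0}), established by a point count on $W_\psi$ that exploits the toric stratification of $Y_\psi$ and the natural $C_4$-action, reducing the count modulo $2$ to the $C_2$-fixed locus $x_1=x_3$, $x_2=x_4$, which is governed precisely by $f_\psi$. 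From this congruence, Chebotarev density gives degree constraints on the extension $K_{f_\psi}L_\psi/L_\psi$ (a $2$-power, Proposition \ref{s1}) and on $K_{f_\psi}L_\psi/K_{f_\psi}$ (of the form $2^i3^j$, Proposition \ref{s7}), and then a short group-theoretic lemma about subgroups of $S_6$ with order divisible by $5$ (Lemma \ref{s6}) forces $L_\psi=K_{f_\psi}$ (Theorem \ref{s5}); the disappearance of $(3,3)$-type elements (Theorem \ref{s11}) is then deduced, not assumed. Your plan also asserts a canonical $G_K$-fixed point forcing $\mathrm{Im}(\br)\subset S_5$, which is stronger than (and not how) the paper's argument runs: the paper obtains ``no $(3,3)$'' plus divisibility by $5$, and only then concludes containment in $S_5$ up to conjugacy.

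For part (2), your character-theoretic outline is essentially sound in spirit but glosses over the structural ingredient the paper uses to get irreducibility over $\F_2$ cleanly: over a field of characteristic $2$, any proper symplectic parabolic has a Levi whose normalizer of the unipotent radical has no element of order $5$ (Lemma \ref{str}, Proposition \ref{red}), so an order-$5$ element in the image already forces irreducibility; absolute irreducibility is then handled by a case analysis that also excludes the endoscopic subgroup $H$ using the $(3,3)$-criterion (Proposition \ref{abs-irr}). For part (3), your discriminant-square and resolvent-sextic outline points in the same direction as the paper, but you do not supply the closing step: the paper reduces both the $D_{10}/A_5$ exclusion and the $F_{20}$ exclusion to showing that $Y^2=5(1-X^5)$ and $Y^2=X^{10}+11X^5-1$ have only the obvious rational points, and proves this via a rank-$0$ Jacobian computation (Magma) combined with an injective reduction of torsion modulo a well-chosen prime above $11$, avoiding Chabauty--Coleman. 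In short: parts (2) and (3) of your sketch are plausible but incomplete, while part (1) has a genuine gap---you have not supplied the key congruence linking $a_{\psi,v}$ to $n(f_\psi,q_v)$ modulo $2$, which is the engine of the whole theorem.
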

This theorem explains reciprocity of the quintic trinomial $f_\psi$ is governed by 
the reduction of the Frobenius polynomial $P_{\psi, v}$ defined as below, as a polynomial in $\F_2[t]$. 
For each good finite place $v$ of $K$ with the residue field $\F_v$ of $q_v$ elements, 
put
$$
P_{\psi, v}(t) := \det(1_4-t\rho_{\psi, 2}({\rm Frob}_v)) = 1 - a_{\psi, v}t + b_{\psi, v}t^2 - q_v^3a_{\psi, v}t^3 + q_v^6t^4 \in \Z_2[t]
$$
which turns to be a Weil polynomial over $\Z$. If $n(f_\psi, q_v)$ denotes the cardinal of solutions of the equation $f_\psi(x)=0$ in $\F_v$, 
then the reciprocity in $f_{\psi}$ is the congruence 
\begin{equation}\label{Rec}
           a_{\psi, v}\, \equiv\, n(f_\psi, q_v) + 1\, (\mathrm{mod}\, 2)
\end{equation}
for all good finite places $v$ not lying over $2$ (see Proposition \ref{c0}).  
There are only four cases for $P_{\psi,v}(t) \ ({\rm mod}\ 2)$ and explicitly they are given as follows:
$$1+t^4,\ 1+t + t^3+t^4,\ 1+t+t^2+t^3+t^4,\ 1+t^2+t^4.$$ 
Under a fixed isomorphism ${\rm GSp}_4(\F_2)\simeq S_6$ explained in Section \ref{GSp4vsS6}, the reciprocity (\ref{Rec}) and Chebotarev's density theorem 
reveal that $K_{f_\psi}$ and $L_\psi:=\bQ^{{\rm Ker}(\br_{\psi,2})}$ are intervened each other and it yields that  
$\br_{\psi,2}$ takes values in a subgroup isomorphic into $S_5$. 
A key to analyze the image is to observe 
the disappearance of $(123)(456) \in S_6$ in the image of $\overline{\rho}_{\psi, 2}$, 
whose characteristic 
polynomial corresponds to $1+t^2+t^4\, \equiv\, (1+t+t^2)^2\, (\mathrm{mod}\, 2)$. 
Our result would be, in some sense, new in computing the image of residual Galois representations coming from 
higher dimensional varieties which would not be kwnon a priori to be automorphic 
though there are singinificant works as in \cite{E}. 

Applying the above theorem, one can describe $\br_{\psi,2}$ in terms of 2-dimensional mod 2 Galois 
representations. Artin representations with $S_5$-image have been studied in \cite{Ca} and this result is 
suggestive to study our representations in a similar way though we need more careful analysis. In fact, 
${\rm GSp}_4(\F_2)={\rm Sp}_4(\F_2)\simeq {\rm SO}(4)(\F_2)$ implies there are two cases of mod 2 automorphy such that  
one is for orthogonal automorphic representations and the other one is for symplectic automorphic representations. 
Notice that before taking the reduction modulo 2, the $2$-adic Galois representation $\rho_{\psi,2}$ is symplectic. 
Therefore, we better find symplectic automorphic representations for $\br_{\psi,2}$. However, there is an issue that 
it is revealed later that when ${\rm Im}(\br_{\psi,2})\simeq S_5$ (generic case), $\br_{\psi,2}$ can be realized 
as a twisted tensor product and it is naturally connected to orthogonal automorphic representations. 
Thus, it seems to be hard to find any symplectic automorphic lift from this.  
To overcome this issue we restrict $\br_{\psi,2}$ to $G_M$ where $M/K$ is the quadratic extension corresponding to 
the kernel of $G_K\lra {\rm Im}(\br_{\psi,2})\simeq S_5\stackrel{\sgn}{\lra}\{\pm 1\}$. Then it turns out that 
$\br_{\psi,2}|_{G_M}$ has the symmetric cubic structure which can be easily liftable to a symplectic representation.  
  
We can combine the above main theorem with many ingredients obtained in \cite{Sasaki}, the results of algebraic quaternionic forms, and 
known Langlands functorial lifts  
to prove the automorphy of our mod 2 Galois representations when $K=F$ is a totally real field. 
Let $\mathcal{H}_2$ be the Siegel-upper half space of degree 2. 
\begin{thm}\label{mt-automorphy}$($Theorem \ref{mt-a5} and Theorem \ref{mt-F20}$)$
Suppose that $K=F$ is a totally real field and $f_\psi$ is irreducible over $F$. 
Assume that ${\rm Gal}(F_{f_\psi}/F)\simeq F_{20},\ A_5$ or $S_5$ and each complex conjugation in ${\rm Gal}(F_{f_\psi}/F)$ 
corresponds to an element of type $(2,2)$ under this isomorphism, equivalently, $\sigma(\psi)<1$ for any embedding $\sigma:F\hookrightarrow \R$. Further assume $[F:\Q]$ is even if ${\rm Gal}(F_{f_\psi}/F)\simeq A_5$. Let $M/F$ be the totally real quadratic extension 
associated to the kernel of $G_F\lra {\rm Im}(\br_{\psi,2})\lra S_5\stackrel{\sgn}{\lra}\{\pm 1\}$ if ${\rm Gal}(F_{f_\psi}/F)\simeq S_5$ and 
$M=F$ otherwise. Put $d=[M:\Q]$.  
Then there exists a holomorphic Hilbert-Siegel Hecke eigen cusp form $h$ on $\mathcal{H}^d_2$ of parallel weight three such that   
$$\br_{\psi,2}|_{G_M}\simeq \br_{h,2}$$
where $\br_{h,2}$ is the reduction of $2$-adic representation $\rho_{h,2}$ associated to $h$ {\rm (}see \cite{Mok} for 
the construction of $\rho_{h,2})$.  
\end{thm}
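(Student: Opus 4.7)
The strategy is to use Theorem \ref{image} to reduce the construction of $h$ to modularity of a two-dimensional totally odd Artin representation (treated by Sasaki \cite{Sasaki}) and then to lift by Langlands functoriality through $\mathrm{GL}_4$ to $\mathrm{GSp}_4$. By Theorem \ref{image}, $\mathrm{Im}(\br_{\psi,2})$ lies inside $S_5\subset S_6\simeq \mathrm{GSp}_4(\F_2)$. In the $S_5$ case, restriction to $G_M$, where $M/F$ is the quadratic subextension of $F_{f_\psi}$ cut out by $\sgn$, pushes the image into $A_5$; the hypothesis that each complex conjugation has cycle type $(2,2)$ (hence is even) guarantees that $M$ is totally real. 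Thus in the $A_5$ and $S_5$ cases we reduce to analysing $\br_{\psi,2}|_{G_M}$ with image in $A_5$.

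\textbf{Symmetric-cube structure and lifting.}
Under the exceptional isomorphisms $A_5\simeq \mathrm{PSL}_2(\F_5)\simeq \mathrm{PGL}_2(\F_4)$, one verifies that the inclusion $A_5\hookrightarrow S_6\simeq\mathrm{Sp}_4(\F_2)$ coincides with the projectivised symmetric cube. Hence $\br_{\psi,2}|_{G_M}$, up to the similitude character, has the form $\mathrm{Sym}^3(\overline{r})$ for some two-dimensional $\overline{r}:G_M\to \mathrm{GL}_2(\bF_2)$. Using Tate's projective lifting theorem and the Schur cover $\mathrm{SL}_2(\F_5)\twoheadrightarrow A_5$, lift $\overline{r}$ to a finite-image representation $r:G_M\to \mathrm{GL}_2(\bQ)$; a suitable character twist then renders $r$ totally odd, a parity normalisation made possible by the cycle-type hypothesis on complex conjugations.

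\textbf{Modularity and functorial transfer.}
Sasaki's theorem \cite{Sasaki} for two-dimensional totally odd Artin representations of $G_M$ with projective image $A_5$ attaches to $r$ a Hilbert modular Hecke eigen cusp form $g$ over $M$ with $r\simeq \rho_{g,\iota_2}$. The assumption that $[F:\Q]$ is even when $\mathrm{Gal}(F_{f_\psi}/F)\simeq A_5$ (so that $[M:\Q]$ is even in all cases) is imposed here because Sasaki's argument proceeds through algebraic modular forms on a totally definite quaternion algebra over $M$. The Kim-Shahidi $\mathrm{Sym}^3$ functorial transfer then produces a cuspidal automorphic representation $\mathrm{Sym}^3(\pi_g)$ of $\mathrm{GL}_4(\A_M)$, cuspidality following from the projective image being $A_5$ rather than dihedral. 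After a Hecke character twist normalising the archimedean parameter to match the Hodge decomposition $\{(3,0),(2,1),(1,2),(0,3)\}$ of $W_\psi$, this representation is essentially self-dual of symplectic type, and Arthur's endoscopic classification for $\mathrm{GSp}_4$ descends it to a cuspidal automorphic representation of $\mathrm{GSp}_4(\A_M)$ that one arranges to be the spinor transfer of a holomorphic Hilbert-Siegel Hecke eigen cusp form $h$ of parallel weight three. By \cite{Mok}, $h$ carries a $2$-adic Galois representation $\rho_{h,2}$, and Chebotarev density applied to the reciprocity (\ref{Rec}) identifies $\br_{\psi,2}|_{G_M}\cong \br_{h,2}$.

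\textbf{The $F_{20}$ case and main obstacle.}
When $\mathrm{Gal}(F_{f_\psi}/F)\simeq F_{20}$, the absolutely irreducible 4-dimensional representation of $F_{20}$ is, over $\bF_2$, induced from a non-trivial character of its unique index-$4$ subgroup $C_5$. Correspondingly $\br_{\psi,2}$ is, up to twist, the Galois representation induced from a character of $G_L$ where $L/F$ is the cyclic quartic subextension of $F_{f_\psi}$. Lifting this character, applying Arthur-Clozel cyclic automorphic induction along $L/F$, and performing the same symplectic descent to $\mathrm{GSp}_4$ produces $h$ with $M=F$. The principal obstacle throughout is the combined execution of the two functorial transfers (Kim-Shahidi $\mathrm{Sym}^3$, respectively Arthur-Clozel induction, followed by $\mathrm{GL}_4\to \mathrm{GSp}_4$) together with the correct archimedean normalisation, so that the resulting form is genuinely holomorphic of exact parallel weight three --- not of Saito-Kurokawa or other CAP type --- and so that the Galois representation $\rho_{h,2}$ supplied by Mok coincides with $\rho_{\psi,2}|_{G_M}$ modulo $2$.
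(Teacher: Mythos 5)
Your high-level strategy (reduce to modularity of a two-dimensional mod~$2$ Artin-type representation, then push through Langlands functoriality to $\mathrm{GSp}_4$) matches the paper, but there is a genuine gap at precisely the point you gloss over with ``a Hecke character twist normalising the archimedean parameter.'' The technical heart of the paper's proof is Proposition~\ref{cong}, a congruence argument in the space of $p$-adic algebraic quaternionic forms (multiplying by Hasse invariants to raise weight, then exploiting the filtration $0\to(\mathrm{St}_2)^{(2)}\to\mathrm{Sym}^2\mathrm{St}_2\to\det\to 0$ over $\overline{\F}_2$) which shows the weight-one Hilbert eigenform attached to $\ot$ is congruent to an eigenform of a \emph{prescribed higher weight} with trivial central character: parallel weight $2$ in the $A_5$ case, and mixed weight $(2\cdot\textbf{1}_d,4\cdot\textbf{1}_d)$ in the $F_{20}$ case. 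These specific weights are exactly what force the resulting $\mathrm{GSp}_4$ archimedean parameter to be $\phi_{(w;2,1)}$, i.e.\ the parallel weight~$3$ holomorphic discrete series. A Hecke character twist cannot do this over a general totally real field: the $\mathrm{Sym}^3$ of a parallel weight~$1$ (Artin-type) parameter is non-regular, so without the congruence step the $\mathrm{GL}_4$ transfer would be non-cohomological and would not descend to any Hilbert--Siegel cusp form of parallel weight~$3$. (The evenness assumption on $[F:\Q]$, incidentally, is there to ensure a totally definite quaternion algebra exists in Proposition~\ref{cong}, i.e.\ for the Jacquet--Langlands step — not inside Sasaki's modularity theorem itself.)

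Two secondary points. First, the detour through Tate's projective lifting and the Schur cover $\mathrm{SL}_2(\F_5)\to A_5$ is unnecessary and misdirected: because $A_5\simeq\mathrm{SL}_2(\F_4)$, one already has a genuine linear representation $\ot:G_M\to\mathrm{SL}_2(\F_4)$ with $\br|_{G_M}\simeq\mathrm{Sym}^3\ot$ (Propositions~\ref{a5Galois}, \ref{s5Galois}); Sasaki's theorem is applied directly to $\ot$ as a totally odd mod~$2$ representation, with no characteristic-zero central extension involved. Second, in the $F_{20}$ case the paper does not use Arthur--Clozel cyclic automorphic induction from $\mathrm{GL}_1(\A_L)$ to $\mathrm{GL}_4(\A_F)$. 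It writes $\br\simeq\mathrm{Ind}^{G_F}_{G_M}\ot$ with $\ot=\mathrm{Ind}^{G_M}_{G_L}\overline{\chi}$ dihedral of type $D_{10}$, produces a Hilbert eigenform over the quadratic $M$ of weight $(2\cdot\textbf{1}_d,4\cdot\textbf{1}_d)$ via Proposition~\ref{cong}, and then applies Roberts' global $L$-packet/theta theorem (Theorem~8.6 of~\cite{R}) to transfer from $\mathrm{GL}_2(\A_M)$ to a holomorphic $\mathrm{GSp}_4(\A_F)$ form — thereby avoiding the same regularity problem a finite-order character of $G_L$ would cause for a direct $\mathrm{GL}_4$ induction.
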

To consider the restriction of $\br_{\psi,2}$ to $G_M$ would be harmless in proving the automorphy of $\rho_{\psi,2}$ since 
the automorphy of $\rho_{\psi,2}|_{G_M}$ implies the one of $\rho_{\psi,2}$ by a well-known descend argument. 

The assumption on $[F:\Q]$ when ${\rm Gal}(F_{f_\psi}/F)\simeq A_5$ is necessary to fully 
apply Jacquet-Langlands correspondence. If we succeed to make use of a level raising argument, 
we would be able to remove this assumption.  

As observed, $\br_{\psi,2}$ can be viewed as an orthogonal representation via 
${\rm GSp}_4(\F_2)={\rm Sp}_4(\F_2)={\rm SO}(4)(\F_2)$ where the orthogonal group ${\rm SO}(4)$ is defined by using 
$\begin{pmatrix} 0_2& s\\s &0_2\end{pmatrix},\ 
s=\begin{pmatrix} 0& 1\\1 &0\end{pmatrix}$. Therefore, it would be also interesting to discuss 
orthogonal automorphy though it no longer has any relation to Dwork family but may be related to arithmetic around 
the arithmetic variety associated to ${\rm GL}_4$ (cf. \cite{AGMC}).  
\begin{thm}\label{o-automorphy}$($Theorem \ref{mt-s5}$)$
Let $\br:G_F\lra {\rm SO}(4)(\F_2)$ be an irreducible mod 2 Galois representation.  
Assume that ${\rm Im}(\br)\simeq S_5$ and each complex conjugation in $G_F$ 
corresponds to an element of type $(2,2)$ under this isomorphism. 
Then there exists a cuspidal automorphic representation $\pi$ of ${\rm GO(2,2)}(\A_F)$ which can be 
transfered to a regular algebraic essentially self-dual cuspidal automorphic representation $\Pi$ of ${\rm GL}_4(\A_F)$ of weight zero such that   
$$\br\simeq \br_{\Pi,2}$$
where $\br_{\Pi,2}$ is the reduction of $2$-adic Galois representation $\rho_{\Pi,2}$ associated to $\Pi$ {\rm (}see Section 1 of \cite{BGHT} for 
the construction of $\rho_{\Pi,2})$.  
\end{thm}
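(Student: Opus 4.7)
The plan is to realize $\br$ as the twisted tensor (Asai) of a two-dimensional mod $2$ Galois representation of $G_M$ for a suitable quadratic totally real extension $M/F$, then invoke Sasaki's modularity together with the Asai lift on the automorphic side.

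First I would perform the group-theoretic reduction. The sign character of $\mathrm{Im}(\br)\simeq S_5$ cuts out a quadratic extension $M/F$; because $(2,2)$ is an even cycle type, every complex conjugation in $\mathrm{Im}(\br)$ lies in $A_5=\mathrm{Ker}(\mathrm{sgn})$, so $M/F$ is totally real and $\mathrm{Im}(\br|_{G_M})=A_5$. The identity component $\mathrm{SO}^\circ(2,2)$ is, up to a central isogeny that becomes trivial in characteristic $2$, the product $\mathrm{SL}_2\times \mathrm{SL}_2$; since $A_5$ is simple, $\br|_{G_M}$ lands in this identity component, and the two projections to $\mathrm{SL}_2$ produce two faithful representations $\sigma_1,\sigma_2:G_M\to \mathrm{GL}_2(\bF_2)$ each with image $A_5\simeq \mathrm{SL}_2(\F_4)$, giving $\br|_{G_M}\simeq \sigma_1\otimes\sigma_2$ on the standard $4$-dimensional orthogonal space. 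The nontrivial coset of $G_M$ in $G_F$ swaps the two $\mathrm{SL}_2$-factors (as $S_5\not\subset \mathrm{SO}^\circ(2,2)$), forcing $\sigma_2\simeq \sigma_1^c$ for any lift $c$ of the nontrivial element of $\mathrm{Gal}(M/F)$. Hence $\br\simeq \mathrm{As}(\sigma_1)$, the Asai representation of $\sigma_1$.

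Next I would invoke Shu Sasaki's modularity theorem, already central to Theorem \ref{mt-automorphy}. The representation $\sigma_1$ is a continuous two-dimensional Artin representation of $G_M$ with projective image $A_5\simeq \mathrm{PSL}_2(\F_5)$; total oddness of $\sigma_1$ at each infinite place of $M$ is forced by the $(2,2)$-type of the complex conjugations in $\mathrm{Im}(\br)$ together with the $\mathrm{SL}_2\times\mathrm{SL}_2$-decomposition. Sasaki's theorem then furnishes a parallel weight-one Hilbert cuspidal Hecke eigenform $g$ over $M$ with $\sigma_1\simeq \br_{g,2}$. The corresponding cuspidal representation $\pi_g$ of $\mathrm{GL}_2(\A_M)$ admits an Asai transfer realized by a cuspidal automorphic representation $\pi$ of $\mathrm{GO}(2,2)(\A_F)$, which further lifts to a regular algebraic essentially self-dual cuspidal automorphic representation $\Pi$ of $\mathrm{GL}_4(\A_F)$ of weight zero. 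Cuspidality of $\pi$, and hence of $\Pi$, is equivalent to $\sigma_1$ not being induced from $G_F$, which follows from the non-decomposability of $\mathrm{Im}(\br)=S_5$. By compatibility of the Asai construction on the Galois and automorphic sides, the $2$-adic representation $\rho_{\Pi,2}$ of \cite{BGHT} is the Asai of $\rho_{g,2}$, and its reduction is $\mathrm{As}(\sigma_1)\simeq \br$.

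The principal obstacle will be to verify Sasaki's hypotheses for $\sigma_1$: total oddness and projective image $A_5$ are as above, but any solvable base change or residual conditions built into Sasaki's theorem must be checked carefully against the $(2,2)$-conjugation assumption on $\br$. A secondary care-point is the precise matching of the Galois-side Asai $\mathrm{As}(\sigma_1)$ with the automorphic-side Asai transfer at all places, so that the weight, polarization, and level of $\Pi$ indeed realize the regular algebraic essentially self-dual weight-zero profile; I would handle this via the standard dictionary between Asai $L$-functions and automorphic forms on $\mathrm{GO}(2,2)$ arising from $\mathrm{Res}_{M/F}\mathrm{GL}_2$, together with the construction of $\rho_{\Pi,2}$ summarized in \cite{BGHT}.
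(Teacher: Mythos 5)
Your group-theoretic reduction is essentially the one in the paper's Proposition \ref{s5Galois}: restrict $\br$ to the quadratic extension $M/F$ cut out by $\mathrm{sgn}$, identify $\br|_{G_M}$ with a representation $\ot:G_M\to\mathrm{SL}_2(\F_4)\simeq A_5$, and recognize $\br$ as the twisted tensor (Asai) $\ot\otimes{}^\iota\ot$. The subsequent appeal to Sasaki's modularity for $\ot$ is also what the paper does. The gap is in the final step, where you pass directly from the parallel weight-one Hilbert eigenform $g$ given by Sasaki's theorem to a \emph{regular algebraic} $\Pi$ on $\mathrm{GL}_4(\A_F)$ of weight zero.

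This jump fails: the Asai transfer of a parallel weight-one Hilbert eigenform over $M$ is again an Artin-type automorphic representation of $\mathrm{GL}_4(\A_F)$, whose archimedean $L$-parameters have repeated infinitesimal characters. Such a $\Pi$ is a non-degenerate limit of discrete series at infinity but not \emph{regular}, so it does not fall under the construction of $\rho_{\Pi,2}$ in Section 1 of \cite{BGHT}, and the claim of ``regular algebraic $\ldots$ of weight zero'' would be false. To get regularity of the Asai/twisted-tensor product, the two archimedean components of the $\mathrm{GL}_2(\A_M)$-form over each infinite place of $F$ must be staggered, e.g.\ have weights $2$ and $3$ so the tensor of their Hodge--Tate weights $\{0,1\}$ and $\{0,2\}$ gives $\{0,1,2,3\}$. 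This requires a \emph{non-paritious} Hilbert eigenform $f$ of weight $(2\cdot\textbf{1}_d,3\cdot\textbf{1}_d)$, which is exactly what Proposition \ref{non-pari} of the paper provides via a mod $2$ congruence from the weight-one form (using the quaternionic-form formalism, an exact sequence of $\mathrm{GL}_2(\F_v)$-modules, and Deligne--Serre lifting, plus the algebraicity results of \cite{DLP} for non-paritious forms). Only then does $\pi=\pi_f\otimes{}^\iota\pi_f$ on $\mathrm{GO}(2,2)(\A_F)$ transfer (via Ramakrishnan \cite{Ra}) to a regular algebraic essentially self-dual cuspidal $\Pi$ of weight zero. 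Your proposal omits this congruence/weight-raising step entirely; without it, the constructed $\Pi$ would not satisfy the hypotheses needed to attach $\rho_{\Pi,2}$ as stated.
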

In the course of the proof of the above theorem, we will use some arithmetic properties of 
non-paritious Hilbert modular forms to construct cuspidal automorphic representations of ${\rm GL}_4(\A_F)$ of weight zero. 
This would be a new application of interesting results in \cite{DLP} and \cite{P}.

\begin{rmk}\label{LEVEL}In above main theorems, with our best knowledge for 
the moment, we can not specify the levels of Siegel-Hilbert forms due to 
the lack of level lowering results $($see Remark \ref{level}$)$.
\end{rmk}

\begin{rmk}\label{PA}The compatible system associated to our Dwork family $X_\psi$ over a CM field 
is known to be potentially automorphic $($\cite{BL},\cite{PT}$)$. Hence we can study possible levels of 
the lifts potentially at 
a suitable $\ell$-adic component and then we may convert some arithmetic information in the levels to those of the $2$-adic component and its reduction modulo 2.   
\end{rmk}

\begin{rmk}\label{GeeT}
In the course of the proof of Theorem \ref{mt-automorphy}, to obtain holomorphic forms we will apply 
the results in \cite{GeeT}. 
Note that the results in \cite{GeeT} are conditional on the trace formula 
$($see the second paragraph in p.472 of \cite{GeeT}$)$. 
However, if we do not require holomorphic forms for automorphy, the proof of \ref{mt-automorphy} shows unconditionally that there exists a 
global generic regular algebraic cuspidal automorphic representation $\Pi$ of ${\rm GSp}_4(\A_M)$ such that 
$\br_{\Pi,2}\simeq \br_{\psi,2}|_{G_M}$.   
\end{rmk}

This paper will be organized as follows. 
In Section 2, we study basic facts for mod 2 Galois representations to ${\rm GSp}_4(\F_2)$. 
We devote Section 3 through Section 4 to study automorphic forms in question and various congruences between 
several automorphic forms. 
We devote Section 5 through Section 6 to determine the image of $\br_{\psi,2}$ for which the results are 
important and play important roles to automorphy in the previous sections.  
In the last section, we give some observations in the possibility of the image of $\br_{\psi,2}$ related to 
some rational points on Diophantine equations in which we use some technique being 
reminiscent of Coleman-Chabauty method though our method is more elementary and is of independent interest. 
Finally, we discuss a variant of our main theorems by introducing 
a twist of Dwork quintic family.  

Let us give a guide for readers. 
The determination of the mod 2 Galois representations are more friendlier than other contents to readers and 
they may read Section 3 and Section 6 assuming the results in Section 5. 
Section 4 and Section 7 are readable independently apart from the other sections. 
The main results in Section 1 is slightly generalized to a twisted version in Section 8.

In \cite{TY}, we will study automorphy of 2-adic Galois representations of rank four with 
the residual image $A_5$ by using idea of \cite{T1} and \cite{KT}.

\textbf{Acknowledgment.} This work started when the second author discussed with the first author about the quintic Dwork family. 
We thank the Tohoku university for making this opportunity and incredible hospitality. 
The second author would like to thank Gabor Wiese for helpful comments. 
 
\section{$\mathrm{GSp}_4$}\label{gsp4}
Let us fix some notation for the smooth group scheme 
$\mathrm{GSp}_4=\mathrm{GSp}_J$ over $\Z$ which is defined 
as the symplectic similitude group in $\mathrm{GL}_4$ associated to $J=\begin{pmatrix} 0_2& s\\-s &0_2\end{pmatrix},\ 
s=\begin{pmatrix} 0& 1\\1 &0\end{pmatrix}$. Explicitly, 
$$\mathrm{GSp}_4=\{X\in \mathrm{GL}_4\ |\ {}^tXJX=\nu(X)J,\ {}^\exists \nu(X)\in \mathrm{GL}_1\}.$$ 
Put $\mathrm{Sp}_4={\rm Ker}(\nu:\mathrm{GSp}_4\lra \mathrm{GL}_1,X\mapsto \nu(X))$. 
Let $B$ be the upper Borel subgroup in $\mathrm{GSp}_4$ with the Levi decomposition $B=TN$. 
Let $P$ be the Siegel parabolic subgroup containing  $B$ and denote by $P=M_PN_P$ its Levi decomposition. 
Let $Q$ be the Klingen parabolic subgroup containing  $B$ with the Levi decomposition $Q=M_QN_Q$. 
All non-trivial (rational) parabolic subgroups containing $B$ are either of $B,P$, or $Q$. 
An explicit form in each case can be found in p.28-29 of \cite{RS}. 
Finally, we define the endoscopic subgroup $H$ of $\mathrm{GSp}_4$ consisting of all elements 
$\begin{pmatrix}
x & 0 & 0 & y \\
0 & a & b & 0 \\
0 & c & d & 0 \\
z & 0 & 0 & w 
\end{pmatrix}$ with $xw-yz=ad-bc$. 
\begin{lem}\label{str}Let $\F_q$ be a finite field of characteristic $2$. 
Then it holds that 
\begin{enumerate}
\item $B(\F_q)\simeq (\F^\times_q)^3\ltimes (\F_q\ltimes \F^3_q)$;
\item $P(\F_q)\simeq (\F^\times_q\times{\rm GL}_2(\F_q))\ltimes \F^3_q$;
\item $Q(\F_q)\simeq (\F^\times_q\times{\rm GL}_2(\F_q))\ltimes \F^3_q$. 
\end{enumerate}
\end{lem}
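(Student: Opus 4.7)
The plan is to verify each of the three isomorphisms by exhibiting the Levi decomposition $H = M_H \ltimes N_H$ for $H \in \{B, P, Q\}$ explicitly and identifying each factor using the defining condition ${}^tXJX = \nu(X) J$.

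For the torus, one finds that $T$ consists of diagonal matrices $\diag(t_1, t_2, \nu t_2^{-1}, \nu t_1^{-1})$ parameterized by $(t_1,t_2,\nu) \in (\gl_1)^3$, so $T(\F_q) \simeq (\F_q^\times)^3$. For the Siegel Levi $M_P$ I would verify that block-diagonal elements of $\gs_4$ must take the form $\begin{pmatrix} A & 0 \\ 0 & \nu\, s\, {}^tA^{-1}s\end{pmatrix}$ with $A \in \gl_2$ and $\nu \in \gl_1$; for the Klingen Levi $M_Q$ they take the form $\diag(\alpha, A, \beta)$ (with $A$ in the central $2\times 2$ block) subject to $\alpha\beta = \det A$. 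In both cases one reads off $M_P(\F_q), M_Q(\F_q) \simeq \F_q^\times \times \gl_2(\F_q)$.

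For the unipotent radicals, I would use the root system of type $C_2$ with positive roots $\alpha_1, \alpha_2, \alpha_1+\alpha_2, 2\alpha_1+\alpha_2$. Pairwise sums within $\{\alpha_2, \alpha_1+\alpha_2, 2\alpha_1+\alpha_2\}$ are not roots of $C_2$, so the associated root subgroups mutually commute and together form an abelian subgroup of $N$ isomorphic to $\F_q^3$; it is normalized by $U_{\alpha_1} \simeq \F_q$ via the Chevalley commutator relations, which gives $N \simeq \F_q \ltimes \F_q^3$ and hence (1). For $N_P$ a direct computation shows its elements are $\begin{pmatrix} I & B \\ 0 & I\end{pmatrix}$ with $B = s\,{}^tB\, s$, i.e.\ $B = \begin{pmatrix} b_1 & b_2 \\ b_3 & b_1\end{pmatrix}$; these multiply by addition in $B$, so $N_P \simeq \F_q^3$ as an abelian group, proving (2).

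The crucial step, and the only place where the hypothesis $\mathrm{char}\,\F_q = 2$ enters essentially, is the Klingen unipotent radical. The symplectic condition forces $N_Q$ to consist of matrices
\[
\begin{pmatrix} 1 & x & y & z \\ 0 & 1 & 0 & y \\ 0 & 0 & 1 & -x \\ 0 & 0 & 0 & 1\end{pmatrix}, \qquad (x,y,z)\in \F_q^3,
\]
with multiplication law $(x,y,z)(x',y',z') = (x+x',\, y+y',\, z+z'+xy'-yx')$; that is, $N_Q$ is the three-dimensional Heisenberg group. Its commutator $[(x,y,z),(x',y',z')] = (0,0,\, 2(xy'-yx'))$ vanishes identically when $\mathrm{char}\,\F_q = 2$, so $N_Q$ becomes abelian and isomorphic to $\F_q^3$, which yields (3). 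This Heisenberg commutator computation is the main (and essentially the only) non-formal point of the proof; in any odd characteristic (3) genuinely fails, so the characteristic $2$ hypothesis must be used in an essential way here.
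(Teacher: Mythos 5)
Your proof is correct, and the central observation --- that the Klingen unipotent radical $N_Q$ is a three-dimensional Heisenberg group whose commutator $2(xy'-yx')$ vanishes precisely in characteristic $2$ --- is exactly the pivot of the paper's own argument. The explicit matrix forms you derive for $T$, $M_P$, $M_Q$, and $N_P$ all check out against the symplectic form ${}^tXJX=\nu J$.

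The one genuine difference is in part (1). The paper decomposes $N$ as $U_{\alpha_2}\ltimes N_1$ with $N_1=N_Q$ the Heisenberg group (the $n(0,\lambda,\mu,\kappa)$'s), so the abelian normal complement $N_1\simeq\F_q^3$ is only abelian because the characteristic is $2$. You instead decompose $N=U_{\alpha_1}\ltimes N_P$, using the Siegel radical $N_P$ (root groups for $\alpha_2,\alpha_1+\alpha_2,2\alpha_1+\alpha_2$), which is abelian in \emph{every} characteristic since no two of those roots sum to a root, and $U_{\alpha_1}$ normalizes it by the $C_2$ commutator relations. Both splittings are valid, but yours has the small advantage of showing that the isomorphism $N(\F_q)\simeq\F_q\ltimes\F_q^3$, and hence claim (1), holds regardless of characteristic --- so the characteristic-$2$ hypothesis is really only load-bearing for the Klingen case (3), as you correctly emphasize. (This also suggests the paper's remark immediately after the Lemma, asserting that (1) fails in odd characteristic, is overstated, at least as an abstract isomorphism of groups.) Your route is slightly cleaner for this reason, but the essential idea and the amount of computation are the same as the paper's.
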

\begin{proof}Since the characteristic of $\F_q$ is 2, it is easy to see that $N_Q(\F_q)$ is abelian. 
The third claim easily follows from the structure of $Q$. 
For $B$, $N(\F_q)$ consists of all elements of form $n(x,\lambda,\mu,\kappa):=
\begin{pmatrix}
1 & 0 & 0 & 0 \\
0 & 1 & x & 0 \\
0 & 0 & 1 & 0 \\
0 & 0 & 0 & 1 
\end{pmatrix}\begin{pmatrix}
1 & \lambda & \mu & \kappa \\
0 & 1 & 0 & \mu \\
0 & 0 & 1 & -\lambda \\
0 & 0 & 0 & 1 
\end{pmatrix}$. Let $N_1=\{n(0,\lambda,\mu,\kappa)\in N(\F_q) \}$. It is easy to check that 
it is a normal abelian subgroup of $N(\F_q)$ and it follows from this that $N_1\simeq \F^3_q$. 
Clearly, $N(\F_q)/N_1\simeq \{n(x,0,0,0,)\in N(\F_q) \}\simeq \F_q$. Hence we have the claim for $B$. 
The remaining case $P$ is similar to the case of $Q$.  
\end{proof}
Notice that the statement of Lemma \ref{str} is false for $B$ and $Q$ when the characteristic of 
the base field is different from 2. 
\section{mod 2 Galois representations to ${\rm GSp}_4(\F_2)$}\label{mod2}
In this section, we study some elementary properties of mod 2 Galois representations to 
${\rm GSp}_4(\F_2)$. We denote by $S_n$ $n$-th symmetric group.  

\subsection{An identification between ${\rm GSp}_4(\F_2)$ and $S_6$}\label{GSp4vsS6}
Let $s:\F^6_2\lra \F_2$ be the linear functional defined by $s(x_1,\ldots,x_6)=x_1+\cdots+x_6$ and 
put $V=\{x\in \F^6_2\ |\ s(x)=0\}$ and $W=V/U$ where $U=\langle (1,1,1,1,1,1) \rangle$. 
Let us consider the bilinear form on $\F^6_2$ given by the formula 
$$\langle x,y \rangle=x_1y_1+\cdots +x_6y_6,\ x,y\in \F^6_2.$$
It induces a non-degenerate, alternating pairing $\langle \ast,\ast \rangle_W$ on $W$ where 
being alternating means $\langle x,x \rangle_W=0$ for each $x\in W$. 
The symmetric group $S_6$ naturally act on $\F^6_2$ and it yields a group homomorphism 
$$\varphi:S_6\lra {\rm GSp}(W,\langle \ast,\ast \rangle_W).$$
It is easy to see that the action of $S_6$ on $W$ is faithful and 
$\varphi$ turns out to be isomorphism by counting all elements of 
both sides. 
There is no canonical choice for basis of $W$ but we have a canonical choice with respect to the 
endoscopic subgroup $H$. 
Let us act the symmetric group $S_3$ on $Y=\{(x_1,x_2,x_3)\in \F^3_2\ |\ x_1+x_2+x_3=0\}$. 
Similarly, we have an isomorphism $$\psi:S_3\lra {\rm GL}(Y)={\rm GL}(Y,\langle \ast,\ast\rangle_Y).$$
The isomorphism $$\eta:Y\oplus Y\lra W,\ ((x_1,x_2,x_3),(y_1,y_2,y_3))\mapsto 
(x_1,x_2,x_3,y_1,y_2,y_3)\ {\rm mod}\ U$$ 
as a $\F_2$-vector space satisfies the relation 
$$\langle u_1,v_1\rangle_Y+\langle u_2,v_2\rangle_Y=
\langle \eta(u_1,v_1),\eta(u_2,v_2)\rangle_W$$  
and further we have the following commutative diagram: 
$$
\xymatrix{
S_3\times S_3 \ar[d]_{\iota} \ar[r]^{(\psi,\psi)\hspace{7mm}} & {\rm GL}(Y)\times {\rm GL}(Y) \ar[d]_\eta \\
 S_6 \ar[r]^{\varphi\hspace{7mm}}  &  {\rm GSp}(W,\langle \ast,\ast \rangle_W)
} 
$$
where the left vertical arrow $\iota$ is defined by the identification 
$$S_3\times S_3={\rm Aut}(\{1,2,3\})\times 
{\rm Aut}(\{4,5,6\})\subset {\rm Aut}(\{1,2,3,4,5,6\})=S_6.$$ 
For instance, choose  $f_1=(1,1,0),f_2=(1,0,1)$ as a basis of $Y$. Then we see that 
\begin{equation}\label{basis}
e_1=\eta(f_1,0),\ e_2=\eta(0,f_1),\ e_3=\eta(0,f_2),\ e_4=\eta(f_2,0)
\end{equation}
make up a basis of $W$. 
Using these basis,  
we have the following commutative diagram: 
$$
\xymatrix{
S_3\times S_3 \ar[d]_{\iota} \ar[r] & {\rm GL}_2(\F_2)\times {\rm GL}_2(\F_2) \ar[d] \\
 S_6 \ar[r]  &  {\rm GSp}_4(\F_2)
} 
$$
such that the image of the right vertical arrow coincides with $H(\F_2)$. 
For instance, the element $(123)(345)$ of order 3 is represented by 
$$
\left(\begin{array}{cccc}
 1 &0 &0 &1 \\
 0 &1 &1 &0 \\
 0 &1 &0 &0 \\
 1 &0 &0 &0
\end{array}
\right). 
$$
To end this subsection, we list up all representation matrices with respect to the basis (\ref{basis}) for each conjugacy class of $S_5$. Here $S_5$ is regarded as $\{\sigma\in S_6\ |\ \sigma(6)=6\}$. 
They are 
$$\tau_1=e,\ \tau_2=(12),\ \tau_{22}=(12)(34),\ \tau_3=(123),\ \tau_{32}=(123)(45),\ \tau_4=(1234),\ \tau_5=(12345)$$
where $e$ stands for the identity element in $S_5$. 
In this order, we have seven matrices respectively, 
$$A_{\tau_1}=I_4,\ 
A_{\tau_2}=\left(\begin{array}{cccc}
 1 &0 &0 &1 \\
 0 &1 &0 &0 \\
 0 &0 &1 &0 \\
 0 &0 &0 &1
\end{array}
\right),\ 
A_{\tau_{22}}=\left(\begin{array}{cccc}
 1 &1 &1 &0 \\
 0 &0 &1 &1 \\
 0 &1 &0 &1 \\
 0 &0 &0 &1
\end{array}
\right),\ 
A_{\tau_{3}}=\left(\begin{array}{cccc}
 0 &0 &0 &1 \\
 0 &1 &0 &0 \\
 0 &0 &1 &0 \\
 1 &0 &0 &1
\end{array}
\right),$$
$$A_{\tau_{32}}=\left(\begin{array}{cccc}
 0 &0 &0 &1 \\
 0 &1 &1 &0 \\
 0 &0 &1 &0 \\
 1 &0 &0 &1
\end{array}
\right),\ 
A_{\tau_{4}}=\left(\begin{array}{cccc}
 1 &1 &1 &0 \\
 1 &0 &1 &1 \\
 1 &1 &0 &1 \\
 1 &0 &0 &1
\end{array}
\right),
A_{\tau_{5}}=\left(\begin{array}{cccc}
 1 &1 &1 &0 \\
 0 &1 &1 &0 \\
 1 &1 &0 &1 \\
 1 &0 &0 &1
\end{array}
\right).$$
The eigen-polynomials are given by 
$$f_{\tau_1}(t)=f_{\tau_2}(t)=f_{\tau_{22}}(t)=f_{\tau_4}(t)=(1+t)^4,\ f_{\tau_{3}}(t)=f_{\tau_{32}}(t)=(1+t)^2(1+t+t^2),$$
and $f_{\tau_{5}}(t)=1+t+t^2+t^3+t^4$. 

\subsection{Certain subgroups of $S_6$}\label{subS6}
In our purpose, we are concerned with any subgroup of $S_6$ whose order is divisible by 5.  
Applying GAP \cite{GAP}, up to conjugacy, we have  
$$C_5,D_{10}:=C_2\ltimes C_5,\ F_{20}:=C_4\ltimes C_5,\ A_5,\ S_5,\ A_6,\ S_6.$$
Clearly, $C_5\subset D_{10}\subset F_{20}$ and $D_{10}\subset A_5$ but $F_{20}\not\subset A_5$. 
Here $C_n$ stands for the cyclic group of order $n$ and 
$A_n$ is the alternating group of degree $n$. 
For instance, in terms of the basis (\ref{basis}), 
the generators $\sigma=(23)(56),\ \tau=(25463)$ of $D_{10}$ in $S_6$ have the following matrix representations  
$$
J=\begin{pmatrix}
0 & 0 & 0 & 1 \\
0 & 0 & 1 & 0 \\
0 & 1 & 0 & 0 \\
1 & 0 & 0 & 0 
\end{pmatrix},\ 
\begin{pmatrix}
0 & 0 & 0 & 1 \\
0 & 0 & 1 & 1 \\
0 & 1 & 0 & 0 \\
1 & 1 & 0 & 1 
\end{pmatrix}
\in {\rm GSp}_4(\F_2)$$
respectively.  

On the other hand, $F_{20}$ can be realized as 
$$F_{20}=\langle \sigma=(12345),\ \tau=(1243) \rangle\subset S_5.$$
Notice that $\tau^2=(14)(23)$ is of type $(2,2)$. 

\subsection{An identification between $A_5$ and ${\rm SL}_2(\F_4)$}\label{A5}
It is well-known that $A_5\simeq {\rm SL}_2(\F_4)$ as a group. 
Notice that ${\rm PGL}_2(\F_4)\simeq {\rm PSL}_2(\F_4)={\rm SL}_2(\F_4)$ since $(\F^\times_4)^2=\F^\times_4$. 
Then we have an embedding 
$${\rm SL}_2(\F_4)\simeq {\rm PGL}_2(\F_4)\stackrel{\sim}{\lra} ({\rm Aut}_{{\rm alg}}(\mathbb{P}^1_{\F_4}))(\F_4)\hookrightarrow 
{\rm Aut}_{{\rm set}}(\mathbb{P}^1(\F_4))$$
where ${\rm Aut}_{{\rm alg}}(\mathbb{P}^1_{\F_4})$ stands for a group scheme of 
the algebraic automorphisms of $\mathbb{P}^1_{\F_4}$ while 
${\rm Aut}_{{\rm set}}(\mathbb{P}^1(\F_4))$ stands for the set theoretic automorphisms. 
It follows ${\rm Aut}_{{\rm set}}(\mathbb{P}^1(\F_4))\simeq S_5$ since $|\mathbb{P}^1(\F_4)|=5$.  
By counting elements, we can check the image of the above embedding is, indeed, $A_5$. 
Fix a generator $a$ of $\F^\times_4$. 
Let us give all conjugacy classes of 
${\rm SL}_2(\F_4)$ as below:
\begin{itemize}
\item the identity matrix $I_2$;
\item $\begin{pmatrix}
1 & 0 \\
1 & 1
\end{pmatrix}$ the class of order 2;
\item $\begin{pmatrix}
a & 0 \\
0 & a^{-1}
\end{pmatrix}$ the class of order 3;
\item $\begin{pmatrix}
1 & 0 \\
1 & a
\end{pmatrix}$ the class of order 5;
\item $\begin{pmatrix}
1 & 0 \\
1 & a^2
\end{pmatrix}$ the class of order 5.  
\end{itemize} 

\subsection{Representations of $A_5$ to ${\rm GSp}_4(\bF_2)$}\label{repA5}
As in the previous subsection, we fix an isomorphism 
$A_5\simeq {\rm SL}_2(\F_4)$ and let us denote its tautological faithful 
representation by $\ot:A_5\lra {\rm GL}_2(\F_4)$ whose image coincides with ${\rm SL}_2(\F_4)$. 
Since the determinant character of $\ot$ is trivial, $\ot_1:={\rm Sym}^3\ot:A_5\lra {\rm GL}_4(\F_4)$ 
factors through ${\rm GSp}_4(\F_4)$. By direct computation, one can check ${\rm End}_{\ot_1}(\F^{\oplus 4}_4)=\F_4$. 
Hence $\ot_1$ is absolutely irreducible. By Example 5.1.9 of \cite{BPPTVY}, the restriction of $S_5(b)$ 
(see the table in Lemma 5.1.7 of loc.cit.) to $A_5$ gives a representation of $A_5$ to ${\rm GSp}_4(\F_2)$ 
which is equivalent to $\ot_1$ by Brauer-Nesbitt's theorem. 
Notice that ${\rm Im}(\ot_1)$ contains no element of type $(3,3)$ under a fixed isomorphism ${\rm GSp}_4(\F_2)\simeq S_6$. 

On the other hand, we regard $\F_4$ as a two dimensional vector space over $\F_2$. Put $V=\F^{\oplus 2}_4\simeq \F^{\oplus 4}_2$. 
Then a map $\langle\ ,\ \rangle:V\times V\lra \F_2,\ ((x_1,y_1),(x_2,y_2))\mapsto {\rm tr}_{\F_4/\F_2}(x_1x_2+y_1y_2)$ 
yields a non-degenerate alternating form on $V$. Hence we have another representation 
$\ot_2:A_5\lra {\rm SL}_2(\F_4)\lra {\rm GSp}_4(V,\langle \ast,\ast \rangle)\simeq {\rm GSp}_4(\F_2)$. 
It is easy to see that it is irreducible over $\F_2$ but not absolutely irreducible. In fact, over $\F_4$, 
it is included in the endoscopic subgroup $H(\F_4)$ up to conjugacy. 
Notice that ${\rm Im}(\ot_2)$ contains an element of type $(3,3)$.  
Combining the above observation with Lemma 5.1.7 of \cite{BPPTVY}, we have the following:
\begin{prop}\label{ab-irr}Let $G$ be a group and $\br:G\lra {\rm GSp}_4(\F_2)$ be a representation. 
Fix an isomorphism ${\rm GSp}_4(\F_2)\simeq S_6$. 
Suppose that ${\rm Im}(\br)\simeq A_5$ and ${\rm Im}(\br)$ contains no element of type $(3,3)$. 
Then $\br$ is absolutely irreducible and it is equivalent to $\ot_1$ via the isomorphism ${\rm Im}(\br)\simeq A_5$. 
\end{prop}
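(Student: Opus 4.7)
The plan is to reduce to classifying injective homomorphisms $A_5\hookrightarrow \gs_4(\F_2)\simeq S_6$ up to $\gs_4(\F_2)$-conjugation, and then to use the $(3,3)$-hypothesis to pick out the correct class.

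Since $A_5$ is simple and nonabelian, $\br$ factors through the quotient $G\twoheadrightarrow\mathrm{Im}(\br)\simeq A_5$, so the claim is equivalent to asserting that every injective homomorphism $A_5\hookrightarrow\gs_4(\F_2)$ whose image contains no element of type $(3,3)$ is $\gs_4(\F_2)$-conjugate to $\ot_1$. Invoking Lemma 5.1.7 of \cite{BPPTVY} together with the constructions in \S\ref{repA5}, every injection $A_5\hookrightarrow \gs_4(\F_2)$ is conjugate to $\ot_1$ or $\ot_2$. The key inputs are: Brauer--Nesbitt applied to the $2$-modular representation theory of $A_5\simeq \mathrm{SL}_2(\F_4)$ gives a unique $4$-dimensional absolutely irreducible $\F_2[A_5]$-module, namely $\mathrm{Sym}^3\ot=\ot_1$; any $\F_2$-irreducible but not absolutely irreducible faithful $4$-dimensional representation must arise from restriction of scalars of one of the two $2$-dimensional absolutely irreducible $\F_4[A_5]$-modules, giving $\ot_2$; and reducibility over $\F_2$ is excluded, since $A_5$ admits no faithful $\F_2$-representation in dimension at most three ($\gl_2(\F_2)\simeq S_3$ has order $6$, and $\gl_3(\F_2)\simeq \mathrm{PSL}_2(\F_7)$ has maximal subgroups of orders $24$ and $21$, neither divisible by $60$).

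To separate the two candidates, I would compute the characteristic polynomial of an order-$3$ element, represented by $\diag(a,a^{-1})\in \mathrm{SL}_2(\F_4)$ for a generator $a$ of $\F_4^\times$. Under $\ot_1=\mathrm{Sym}^3\ot$ the eigenvalues are $1,a,a^{-1},1$, giving characteristic polynomial $(1+t)^2(1+t+t^2)=1+t+t^3+t^4$. Under $\ot_2$, multiplication by $a$ on $\F_4$ has $\F_2$-characteristic polynomial $1+t+t^2$, so on $\F_4^{\oplus 2}$ the characteristic polynomial is $(1+t+t^2)^2=1+t^2+t^4$. From the list of possible reductions of Frobenius polynomials in the introduction and the table of $S_5$-class characteristic polynomials at the end of \S\ref{GSp4vsS6}, the polynomial $1+t^2+t^4$ corresponds precisely to the conjugacy class of elements of type $(3,3)$ in $S_6$, while $1+t+t^3+t^4$ does not. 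The hypothesis therefore eliminates $\ot_2$ and forces $\br\sim\ot_1$; absolute irreducibility has already been recorded in \S\ref{repA5}.

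The main obstacle is the classification step: one must ensure that every injection $A_5\hookrightarrow\gs_4(\F_2)$ is $\gs_4(\F_2)$-conjugate (not merely $\gl_4(\F_2)$-conjugate) to $\ot_1$ or $\ot_2$. This reduces to the uniqueness up to scalar of the invariant symplectic form, which for $\ot_1$ follows from absolute irreducibility via Schur's lemma, and for $\ot_2$ from its realization inside the endoscopic subgroup $H$ described in \S\ref{GSp4vsS6}. Once this is in place, the rest is a short characteristic-polynomial computation.
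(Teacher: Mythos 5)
Your proof is correct and follows essentially the same route as the paper: the key steps are the classification of faithful $4$-dimensional $\F_2[A_5]$-modules into the two candidates $\ot_1$ and $\ot_2$ (via Lemma 5.1.7 of \cite{BPPTVY} together with the constructions in Section \ref{repA5}), and then eliminating $\ot_2$ by the observation that the characteristic polynomial $(1+t+t^2)^2 = 1+t^2+t^4$ of an order-$3$ element in $\mathrm{Im}(\ot_2)$ is exactly the one belonging to $(3,3)$-type elements of $S_6$. The paper's proof is terser (it simply points back to the preceding paragraphs and the \cite{BPPTVY} reference), whereas you fill in the dimension-counting argument ruling out faithful $\F_2$-representations of $A_5$ in dimension $\le 3$ and explicitly address $\mathrm{GSp}_4$- versus $\mathrm{GL}_4$-conjugacy; both of these elaborations are sound and useful, but do not change the underlying argument.
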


\subsection{Semi-simplicity}
Let $\F_q$ be a finite field of characteristic 2. 
Let $G$ be a group and $\br:G\lra {\rm GSp}_4(\F_q)$ be a representation.
As in the previous section, by taking the base change to $\F_q$ we have the identification 
${\rm GSp}_4(\F_q)={\rm GSp}(W_{\F_q},\langle \ast,\ast \rangle_{W_{\F_q}})$. 
The dual $\br^\vee$  of $\br$ is naturally identified with $\br$ by using $\langle \ast,\ast \rangle_{W_{\F_q}}$.  
\begin{prop}\label{red}Keep the notation as above. 
There exists a symplectic basis of $W$ such that if $\br$ is reducible, then 
either of the followings holds:
\begin{enumerate}
\item ${\rm Im}(\br)\subset B(\F_q)$;
\item ${\rm Im}(\br) \subset P(\F_q)$ but $\br\not \subset B(\F_q)$;
\item ${\rm Im}(\br) \subset Q(\F_q)$ but $\br\not \subset B(\F_q)$;
\item ${\rm Im}(\br) \subset H(\F_q)$ but neither of parabolic subgroups contains ${\rm Im}(\br)$;
\end{enumerate}
\end{prop}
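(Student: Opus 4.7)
The plan is to analyze the invariant subspaces of $W_{\F_q}$ under $\br$ using the symplectic geometry provided by $\langle\ast,\ast\rangle_{W_{\F_q}}$, and then to match each case with an explicit parabolic or the endoscopic subgroup by choosing an adapted symplectic basis. The key input is that $\br$ is self-dual via the symplectic form, so if $U \subsetneq W_{\F_q}$ is a proper invariant subspace, then its symplectic perpendicular $U^\perp$ is also $\br$-invariant. Consequently we may reduce to the situation where $\dim U \in \{1,2\}$.

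First I would split into two main branches, according to whether $\br$ admits an invariant isotropic line.

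\textbf{Case A: there exists an invariant isotropic line $\ell\subset W_{\F_q}$.} Then $\ell\subset\ell^\perp$ is a $\br$-stable partial flag of dimensions $1\subset 3$, whose stabilizer is exactly the Klingen parabolic $Q$. Choosing a symplectic basis adapted to this flag gives $\mathrm{Im}(\br)\subset Q(\F_q)$. Two sub-subcases then arise: if there is in addition a $\br$-stable Lagrangian $L$ with $\ell\subset L\subset \ell^\perp$, then the complete symplectic flag $\ell\subset L\subset \ell^\perp\subset W_{\F_q}$ is invariant and we land in case (1), $\mathrm{Im}(\br)\subset B(\F_q)$; otherwise we are in case (3).

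\textbf{Case B: no invariant isotropic line exists.} Then every proper invariant subspace has dimension $2$ (a $3$-dimensional invariant subspace would produce a $1$-dimensional invariant isotropic one via the perpendicular). Let $U$ be any invariant $2$-plane, and consider its radical $R:=U\cap U^\perp$. If $\dim R=1$, then $R$ is invariant (the radical of an invariant subspace is invariant) and isotropic, contradicting our assumption. So $\dim R\in\{0,2\}$, i.e. $U$ is either non-degenerate (hyperbolic) or totally isotropic (Lagrangian). If some invariant $U$ is Lagrangian, then $\mathrm{Im}(\br)\subset P(\F_q)$ after adapting the basis, and $\mathrm{Im}(\br)\not\subset B(\F_q)$ because we are in Case~B; this is case (2). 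Otherwise every invariant $2$-plane is non-degenerate, so for such $U$ we get a symplectic orthogonal decomposition $W_{\F_q}=U\oplus U^\perp$ into two invariant hyperbolic planes, and I would choose a symplectic basis $e_1,e_4$ of $U$ and $e_2,e_3$ of $U^\perp$ with $\langle e_1,e_4\rangle=\langle e_2,e_3\rangle=1$, which places $\mathrm{Im}(\br)$ in the endoscopic subgroup $H(\F_q)$; by construction no invariant isotropic flag exists in this case, so the image avoids both $P$ and $Q$, giving case (4).

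The main care, and the only delicate point, is in Case~B: one must verify that the non-degenerate invariant plane and its perpendicular can be simultaneously put into a symplectic basis matching the specific block pattern of $H$ as defined in Section~\ref{gsp4} (where $e_1,e_4$ form one hyperbolic pair and $e_2,e_3$ the other, rather than the more usual $e_1,e_2$ vs $e_3,e_4$ split). This is a standard change-of-basis on each hyperbolic plane, but it has to be performed compatibly with the form $J$ of the paper, and one should keep in mind that we are in characteristic $2$ so that alternating and symmetric coincide; in particular the argument that the radical of an invariant form is invariant, and the construction of hyperbolic pairs, go through without any sign issues.
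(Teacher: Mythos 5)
Your proof is correct and takes essentially the same approach as the paper, which simply says ``It follows by using the alternative pairing'' without elaboration; you have filled in the standard argument: self-duality via the form produces a proper invariant subspace of dimension $1$ or $2$, a $1$-dimensional invariant subspace is automatically isotropic (the form is alternating) and gives the Klingen flag, and a $2$-dimensional invariant subspace has radical of dimension $0$, $1$ or $2$, leading respectively to $H$, back to the isotropic-line case, or to $P$, with the Borel case arising when a full isotropic flag is stabilized.

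One minor remark: in Case~A your dichotomy hinges on whether some invariant Lagrangian contains the \emph{particular} $\ell$ you picked. In principle there could be an invariant Lagrangian $L$ with $\ell\not\subset L$; then $L\cap\ell^\perp$ is an invariant isotropic line inside $L$, so a full invariant flag does exist and one could instead land in case~(1). This does not make your argument wrong — the proposition only requires that \emph{some} basis realize one of the four mutually nonexclusive alternatives, and with the basis adapted to your $\ell$ the image genuinely sits in $Q(\F_q)$ and outside $B(\F_q)$, so case~(3) as stated holds — but it is worth noticing that the four cases are not canonically attached to $\br$, only to the chosen basis.
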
   
\begin{proof}
It follows by using the alternating pairing. 
\end{proof}
\begin{cor}Keep the notation in Proposition \ref{red}. Assume that $\F_q=\F_2$. If ${\rm Im}(\br)$ contains an element of order five, then 
$\br$ is irreducible. 
\end{cor}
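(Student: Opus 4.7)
The plan is to combine Proposition~\ref{red} with Lemma~\ref{str} (and the endoscopic description via $S_3\times S_3$) and then run a simple order-counting argument based on Lagrange's theorem.

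First I would prove the contrapositive: assume $\br$ is reducible and show that $\mathrm{Im}(\br)$ cannot contain an element of order $5$. By Proposition~\ref{red}, after a suitable choice of symplectic basis, $\mathrm{Im}(\br)$ is contained in one of the four subgroups $B(\F_2)$, $P(\F_2)$, $Q(\F_2)$, or $H(\F_2)$ of $\mathrm{GSp}_4(\F_2)$.

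Next I would compute the orders of these subgroups when $q=2$. Using Lemma~\ref{str}(1), $|B(\F_2)|=|(\F_2^\times)^3|\cdot|\F_2\ltimes\F_2^3|=1\cdot 2^4=16$. By Lemma~\ref{str}(2)--(3), $|P(\F_2)|=|Q(\F_2)|=|\F_2^\times\times\mathrm{GL}_2(\F_2)|\cdot|\F_2^3|=1\cdot 6\cdot 8=48$. For the endoscopic subgroup, the commutative diagram in Section~\ref{GSp4vsS6} identifies $H(\F_2)$ with the image of $S_3\times S_3\subset S_6$, hence $|H(\F_2)|=36$. None of the integers $16,\,48,\,48,\,36$ is divisible by $5$, so by Lagrange's theorem none of these subgroups contains an element of order $5$. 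This contradicts the assumption that $\mathrm{Im}(\br)$ contains such an element, so $\br$ must be irreducible.

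There is essentially no obstacle here: the entire argument is a direct consequence of Proposition~\ref{red} combined with the order computations made possible by Lemma~\ref{str} (which crucially exploits that $|\F_2^\times|=1$ collapses the torus) and the identification $H(\F_2)\simeq S_3\times S_3$ from Section~\ref{GSp4vsS6}. The only point worth being careful about is to invoke the correct version of Proposition~\ref{red}, so that one genuinely lists all reducible possibilities (Borel, two maximal parabolics, and the endoscopic subgroup) rather than only the parabolic ones; the endoscopic case is exactly the scenario where the representation splits as a sum of two two-dimensional pieces but still preserves the symplectic form.
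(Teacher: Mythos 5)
Your proof is correct and is exactly the argument the paper has in mind (the paper's proof is the one-line remark ``It follows from Lemma~\ref{str} and Proposition~\ref{red},'' which amounts to the order count you carry out). The key computations check out: $|B(\F_2)|=16$, $|P(\F_2)|=|Q(\F_2)|=48$, and $|H(\F_2)|=|\mathrm{GL}_2(\F_2)|^2=36$, none of which is divisible by $5$, so the contrapositive via Lagrange's theorem settles the claim.
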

\begin{proof}It follows from Lemma \ref{str} and Proposition \ref{red}. 
\end{proof}
Recall the subgroups of $S_6$ whose order divisible by five are given as follows:
$$C_5,D_{10}:=C_2\ltimes C_5,\ F_{20}:=C_4\ltimes C_5,\ A_5,\ S_5,\ A_6,\ S_6.$$
Let us fix an isomorphism ${\rm GSp}_4(\F_2)\simeq S_6$. 
\begin{prop}\label{abs-irr}
Let $G$ be a group and $\br:G\lra {\rm GSp}_4(\F_2)$ be a representation such that 
the order of ${\rm Im}(\br)$ is divisible by five and ${\rm Im}(\br)$ contains no element of 
type $(3,3)$. Then the followings are equivalent 
\begin{enumerate}
\item ${\rm Im}(\br)$ is isomorphic to neither $C_5$ nor $D_{10}$;
\item $\br$ is absolutely irreducible. 
\end{enumerate} 
\end{prop}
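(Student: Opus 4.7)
The plan is to leverage the short list of subgroups of $S_6$ with order divisible by five recalled in Section \ref{subS6}, namely $C_5$, $D_{10}$, $F_{20}$, $A_5$, $S_5$, $A_6$, $S_6$, combined with the $(3,3)$-free hypothesis which immediately rules out $A_6$, $S_6$ (both containing $(123)(456)$) as well as the exceptional $S_6$-conjugate of $A_5$ (whose image under the outer automorphism of $S_6$ carries $3$-cycles to elements of type $(3,3)$). So ${\rm Im}(\br)$ is $S_6$-conjugate to one of $C_5$, $D_{10}$, $F_{20}$, the standard $A_5\subset S_5$, or $S_5$.

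For $(1)\Rightarrow(2)$, I would treat the three allowed cases separately. The case ${\rm Im}(\br)\simeq A_5$ is settled directly by Proposition \ref{ab-irr}. The case ${\rm Im}(\br)\simeq S_5$ reduces to it by restriction: $\br|_{\br^{-1}(A_5)}$ has image the standard $A_5\subset S_5\subset S_6$, still $(3,3)$-free, so it is absolutely irreducible by Proposition \ref{ab-irr}, whence so is $\br$. For ${\rm Im}(\br)\simeq F_{20}$, the matrix $A_{\tau_5}$ recorded in Section \ref{GSp4vsS6} has characteristic polynomial $1+t+t^2+t^3+t^4$, which is the $5$th cyclotomic polynomial mod $2$, irreducible over $\F_2$ because $2$ has order $4$ in $(\Z/5\Z)^\times$; over $\bF_2$ it splits with four distinct roots. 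Thus the order-$5$ generator $\sigma\in F_{20}$ has four distinct one-dimensional eigenspaces over $\bF_2$, labelled by the nontrivial characters of $\langle\sigma\rangle$. The defining relation $\tau\sigma\tau^{-1}=\sigma^2$ (immediate from the cycle presentation $\sigma=(12345)$, $\tau=(1243)$) makes the order-$4$ element $\tau$ cycle these four characters transitively through the $4$-cycle $\chi\mapsto\chi^2$. Hence the only $F_{20}$-stable $\bF_2$-subspaces of the representation space are $\{0\}$ and the whole $4$-dimensional space, giving absolute irreducibility.

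For $(2)\Rightarrow(1)$, I argue by contrapositive. When ${\rm Im}(\br)\simeq C_5$, the single generator already diagonalises over $\bF_2$ into four distinct one-dimensional characters, so $\br$ fails to be absolutely irreducible. When ${\rm Im}(\br)\simeq D_{10}$, a direct computation with the explicit generators $\sigma=(23)(56)$, $\tau=(25463)$ of Section \ref{subS6} yields $\sigma\tau\sigma^{-1}=\tau^{-1}$; the order-$2$ element therefore interchanges the $\chi$- and $\chi^{-1}$-eigenspaces of $\tau$, so the four nontrivial characters $\{\chi,\chi^2,\chi^3,\chi^4\}$ of $\langle\tau\rangle$ pair up as $\{\chi,\chi^{-1}\}$ and $\{\chi^2,\chi^{-2}\}$. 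Each pair spans a $D_{10}$-stable $2$-dimensional subspace over $\bF_2$, so $\br$ again fails to be absolutely irreducible.

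The substantive step is the $F_{20}$ case: everything else is either a quotation of Proposition \ref{ab-irr} or direct linear algebra over a finite field, but for $F_{20}$ one needs both the irreducibility of $1+t+t^2+t^3+t^4$ over $\F_2$ and the transitivity of the conjugation action of $\langle\tau\rangle$ on the nontrivial characters of $\langle\sigma\rangle$. Both facts follow immediately from the arithmetic of $(\Z/5\Z)^\times$ together with the explicit cycle relations, so no essential obstacle arises.
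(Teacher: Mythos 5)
Your proof is correct, but the implication $(1)\Rightarrow(2)$ is argued quite differently from the paper. The paper proceeds by contradiction: if $\br$ is not absolutely irreducible, then after base change its image lies in a proper parabolic $P'(\F_q)$ or in the endoscopic subgroup $H(\F_q)$. Parabolics are excluded via Lemma \ref{str}, since an order-five element would have to sit in the Levi factor, which the explicit structure rules out; the endoscopic case is excluded by citing \cite{BPPTVY} (Lemma 5.1.7, Example 5.1.11) to show that the only non-absolutely-irreducible possibility through $H(\F_q)$ would have image $A_5$ acting via $\F_4^{\oplus 2}$, which contains a $(3,3)$-element and so contradicts the hypothesis. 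You instead enumerate the admissible $(3,3)$-free images with order divisible by five ($F_{20}$, $A_5$, $S_5$), settle $A_5$ via Proposition \ref{ab-irr}, reduce $S_5$ to $A_5$ by restriction, and handle $F_{20}$ directly by the eigenspace-transitivity argument coming from $\tau\sigma\tau^{-1}=\sigma^2$ and the irreducibility of $1+t+t^2+t^3+t^4$ over $\F_2$. Your route is more elementary and self-contained (no appeal to the parabolic/endoscopic classification beyond what is already needed for $A_5$), whereas the paper's is more uniform and does not require a case split on the isomorphism type of the image. Both treatments of $(2)\Rightarrow(1)$ are essentially the same decomposition into eigenspace sums and induced pieces. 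One small point worth keeping from your write-up: you explicitly flag that the $(3,3)$-free condition discards the exceptional (transitive) copy of $A_5$ under the outer automorphism of $S_6$; the paper's list in Section \ref{subS6} is a little terse on this distinction, and your attention to it is welcome.
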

\begin{proof}Suppose that ${\rm Im}(\br)$ is isomorphic to either $C_5$ or $D_{10}$. 
In either case, ${\rm Im}(\br)$ contains $C_5$ and let us put $H=\br^{-1}(C_5)$.  
Then $\br|_H$ is reducible over $\F_{2^4}$. Let $\omega_4:H\lra \F^\times_{2^4}$ be a 
component of $\br|_H$ which is of order five. Then it is easy to see that 
$\br|_H\simeq \omega_4\oplus \omega^2_4\oplus \omega^4_4\oplus \omega^8_4$.  
Therefore, the case when ${\rm Im}(\br)\simeq C_5$ is done. 
In the case when ${\rm Im}(\br)\simeq D_{10}$, we have $\br\simeq {\rm Ind}^{G}_{H}\omega_4\oplus {\rm Ind}^{G}_{H}\omega^2_4$. 

Assume to the contrary. If $\br$ is not absolutely irreducible, we see that it is contained in 
$P'(\F_q)$ or $H(\F_q)$ for  
 a finite extension $\F_{q}/\F_2$ where $P'$ is one of three parabolic subgroups in Section \ref{gsp4}. 
Assume ${\rm Im}(\br)\subset P'(\F_q)$. Then an element of order five in ${\rm Im}(\br)$ has to belong to the 
Levi factor of $P'$ which normalized the unipotent radical of $P'$. However, this can not occur by the 
structure of $P'$. 
When ${\rm Im}(\br)\subset H(\F_q)$ but ${\rm Im}(\br)\not \subset P'(\F_q)$ for 
any parabolic subgroup $P'$, by Lemma 5.1.7, Example 5.1.11 of \cite{BPPTVY}, the only possibility is 
${\rm Im}(\br)\simeq A_5$ and $\br$ is obtained from the natural action of $A_5\simeq {\rm SL}_2(\F_4)$ 
acting on $\F^{\oplus 2}_4\simeq \F^{\oplus 4}_2$. Then the image contains an element of type (3,3) but 
it contradicts the assumption.   
\end{proof}
Let $G$ be a group with a subgroup $H$ of index 2. 
For a representation $\rho:H\lra {\rm GL}(V)$ of $H$ 
and a lift $\widetilde{h}$ of the generator $h$ of  $G/H$ to $G$, 
we define the representation $\rho\otimes {}^h\rho$ of $G$ by, for each $x\otimes y\in V$, 
$$(\rho\otimes {}^h\rho)(g)(x\otimes y) 
=\left\{\begin{array}{ll}
 \rho(g)x\otimes\rho(\widetilde{h}^{-1}g \widetilde{h})y & (g\in H) \\
 y\otimes\rho(\widetilde{h}^2)x & (g=\widetilde{h})
 \end{array}\right.
 $$
 whose isomorphism class is independent of the choice of $\widetilde{h}$. For any quadratic extension $M/K$ of fields with 
${\rm Gal}(M/K)=\langle \iota \rangle$ and 
a Galois representation $\rho$ of $G_M$, the representation $\rho\otimes {}^\iota \rho$ of $G_K$ is said to be 
the twisted tensor product of $\rho$ (cf. Section 2.1 of \cite{Berger}).   

Let us fix an isomorphism ${\rm GSp}_4(\F_2)\simeq S_6$. 

\begin{prop}\label{s5Galois}Let $K$ be a totally real field. Let $\br:G_K\lra {\rm GSp}_4(\F_2)$ be an irreducible  
mod 2 Galois representation. Suppose that ${\rm Im}(\br)$ is isomorphic to $S_5$ and ${\rm Im}(\br)$ contains no element of 
type $(3,3)$.   
Assume further that for each complex conjugation $c$ of $G_K$, $\br(c)$ is of type $(2,2)$. 
Then there exist a totally real quadratic extension $M/K$ with 
${\rm Gal}(M/K)=\langle \iota \rangle$  and an irreducible totally odd Galois 
representation $\ot:G_M\lra {\rm GL}_2(\F_4)$ satisfying 
\begin{enumerate}
\item $\br \simeq \ot\otimes {}^\iota \ot$ 
as a representation to ${\rm GL}_4(\F_4)$;
\item ${\rm Im}(\ot)\simeq {\rm SL}_2(\F_4)\simeq A_5$;
\item  for each complex conjugation $c$ of $G_K$, $\ot(c)$ is conjugate to 
$s:=\begin{pmatrix}
0 & 1  \\
1 & 0 
\end{pmatrix}$.
\end{enumerate}
\end{prop}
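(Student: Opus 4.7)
The plan is to produce $M$ as the fixed field of the composition $G_K\twoheadrightarrow {\rm Im}(\br)\simeq S_5\stackrel{\sgn}{\lra}\{\pm 1\}$, and to take $\ot$ to be $\br|_{G_M}$ (extended to $\F_4$) viewed through the fixed isomorphism $A_5\simeq {\rm SL}_2(\F_4)$ of Section \ref{A5}, so that $\ot:G_M\twoheadrightarrow A_5\stackrel{\sim}{\lra}{\rm SL}_2(\F_4)\hookrightarrow{\rm GL}_2(\F_4)$. Every complex conjugation $c$ has $\br(c)$ of cycle type $(2,2)$, hence even, so $c\in G_M$; therefore $M/K$ is totally real quadratic and item (2) is immediate. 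For item (3), the elements of order two in ${\rm SL}_2(\F_4)$ form a single conjugacy class (Section \ref{A5}), and $s=\begin{pmatrix}0&1\\1&0\end{pmatrix}$ belongs to it ($s^2=I_2$, $s\neq I_2$, $\det s=1$ in characteristic two), so $\ot(c)$ is conjugate to $s$; this also witnesses that $\ot$ is totally odd in the mod $2$ sense and, combined with ${\rm Im}(\ot)={\rm SL}_2(\F_4)$, that $\ot$ is absolutely irreducible.

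The heart of the matter is item (1). Fix a lift $\widetilde{\iota}\in G_K\setminus G_M$ of $\iota$. For $g\in G_M$, conjugation $g\mapsto \widetilde{\iota}^{-1}g\widetilde{\iota}$ preserves $G_M$ and, through $\br$, realises an outer automorphism of $A_5$ given by conjugation by a transposition of $S_5$. Since $|\F_4^\times|$ is odd we have ${\rm PGL}_2(\F_4)={\rm SL}_2(\F_4)$ with trivial centre, so ${\rm Aut}({\rm SL}_2(\F_4))={\rm SL}_2(\F_4)\rtimes{\rm Gal}(\F_4/\F_2)$ and the outer class corresponds precisely to the arithmetic Frobenius $\sigma$. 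Consequently there exists $h\in {\rm SL}_2(\F_4)$ with $\ot(\widetilde{\iota}^{-1}g\widetilde{\iota})=h\sigma(\ot(g))h^{-1}$ for all $g\in G_M$, i.e.\ $\ot^{\widetilde{\iota}}|_{G_M}\cong \ot^{(\sigma)}$, and therefore $(\ot\otimes {}^\iota\ot)|_{G_M}\cong \ot\otimes\ot^{(\sigma)}$. By Steinberg's tensor product theorem for ${\rm SL}_2(\F_4)$ (with $3=1+2\cdot 1$ in base $2$), ${\rm Sym}^3\ot\cong \ot\otimes\ot^{(\sigma)}$. Combined with Proposition \ref{ab-irr}—applied to $\br|_{G_M}$, whose image is $A_5$ and which still contains no element of type $(3,3)$—this identifies $\br|_{G_M}\otimes_{\F_2}\F_4$ with $\ot_1={\rm Sym}^3\ot$, yielding an isomorphism $(\br\otimes_{\F_2}\F_4)|_{G_M}\cong (\ot\otimes{}^\iota\ot)|_{G_M}$ of $G_M$-representations.

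It remains to promote this isomorphism from $G_M$ to $G_K$. Since the common restriction to $G_M$ is absolutely irreducible, any two $G_K$-extensions differ by a twist by a character $\chi:G_K/G_M\simeq \Z/2\Z\to \F_4^\times$, and the odd order of $\F_4^\times$ forces $\chi=1$; hence $\br\otimes_{\F_2}\F_4\cong \ot\otimes{}^\iota\ot$, completing item (1). The main obstacle is pinning down intrinsically that $\ot^{\widetilde{\iota}}|_{G_M}\cong \ot^{(\sigma)}$ via the outer automorphism of $A_5\simeq {\rm SL}_2(\F_4)$, and then matching ${\rm Sym}^3\ot$ with the tensor induction side through Steinberg's decomposition; once this is done the quadratic descent is painless thanks to the odd order of $\F_4^\times$ in characteristic two.
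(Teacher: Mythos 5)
Your proof is correct, but it takes a genuinely different route from the paper's. The paper proves item (1) by directly applying the Brauer--Nesbitt theorem to the two $G_K$-representations: it compares the characteristic polynomials of $\br(g)$ and $(\ot\otimes{}^\iota\ot)(g)$ for every conjugacy class of $S_5$, using the explicit matrices tabulated in Sections \ref{GSp4vsS6} and \ref{A5}, and concludes the isomorphism in one stroke without passing through $G_M$. You instead first prove the isomorphism after restriction to $G_M$, by combining (i) Proposition \ref{ab-irr} to get $\br|_{G_M}\cong{\rm Sym}^3\ot$, (ii) Steinberg's tensor product theorem ${\rm Sym}^3{\rm St}\cong{\rm St}\otimes{\rm St}^{[1]}$ in characteristic $2$, and (iii) the identification of the outer automorphism of $A_5\cong{\rm SL}_2(\F_4)$ induced by a transposition of $S_5$ with the arithmetic Frobenius via ${\rm Out}({\rm SL}_2(\F_4))\cong{\rm Gal}(\F_4/\F_2)$, to conclude $\ot^{\widetilde{\iota}}\cong\ot^{(\sigma)}$; you then descend from $G_M$ to $G_K$ by Clifford theory together with the observation that ${\rm Hom}(\Z/2\Z,\F_4^\times)$ is trivial. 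Each approach has its merits: the paper's argument is short and self-contained given the matrix tables already worked out, while yours isolates the structural reason the twisted tensor decomposition exists (Steinberg plus the Frobenius realizing ${\rm Out}(A_5)$), which also makes transparent why ${\rm Ind}^{G_K}_{G_M}\ot$ rather than $\ot\otimes{}^\iota\ot$ fails to descend to $\F_2$, as flagged in Remark \ref{symp-ortho}. For item (3) your argument matches the paper in spirit; the paper also records that the two $(2,2)$-type options in ${\rm GSp}_4(\F_2)$ fuse into a single ${\rm GL}_4$-conjugacy class, which is the content of the observation that $\ot(c)$ is conjugate to $s$.
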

\begin{proof}
Let us fix an isomorphism ${\rm Im}(\br)\simeq S_5$ and consider the composition $\mu:=\sgn\circ \br:G_K\lra \{\pm 1\}$
where $\sgn:S_5\lra \{\pm 1\}$ stands for the sign character. 
By assumption, for each complex conjugation $c$ of $G_K$, $\mu(c)=1$. 
Hence there exists a totally real quadratic extension $M/K$ corresponding to the kernel of $\nu$ such that 
$\br(G_M)\simeq A_5$. 
Since $A_5\simeq {\rm SL}_2(\F_4)$, 
there is a representation $\ot:G_M\lra {\rm SL}_2(\F_4)$ such that ${\rm Im}(\br|_{G_M})\simeq {\rm Im}(\ot)$. 
Under this isomorphism, by comparing characteristic polynomials of all elements in $S_5$ by using results in 
Section \ref{GSp4vsS6} and Section \ref{A5}, we can easily check 
that $$\br\simeq \ot\otimes{}^\iota \ot$$
by the Brauer-Nesbitt theorem.  
  
Further, by assumption, for each complex conjugation $c$ of $G_K$, 
$\br(c)$ is conjugate to either $J$ or 
$\begin{pmatrix}
s & 0_2  \\
0_2 & s 
\end{pmatrix}$. They are not conjugate in ${\rm GSp}_4(\F_2)$ but conjugate each other in ${\rm GL}_4(\F_2)$. 
The third claim follows from this. 
\end{proof}

\begin{rmk}\label{symp-ortho}
\begin{enumerate}
\item The group $S_5$ can be realized in ${\rm GSp}_4(\F_2)$ as a twisted tensor product of 
a mod 2 two-dimensional representation $\ot$ of 
$A_5$ twisted by $S_5/A_5$ as in 
Theorem \ref{s5Galois}. As in Proposition \ref{abs-irr}, it is absolutely irreducible even after the restriction to 
$A_5$.  
On the other hand, ${\rm Ind}^{S_5}_{A_5}\ot$ is a representation to ${\rm GSp}_4(\F_4)$ which can not be descend to 
${\rm GSp}_4(\F_2)$ since it contains an element of order 3 with the characteristic polynomial $(1+t+t^2)^2$. 
Such representations appear in the context of \cite{Taylor1} regarding abelian surfaces with 
real multiplication by $\Q(\sqrt{5})$.  
\item It is easy to see that 
$\wedge^2 {\rm Ind}^{G_K}_{G_M}\ot\simeq (\ot\otimes{}^\iota \ot)\oplus \textbf{1}\oplus \textbf{1}$. 
\end{enumerate}
\end{rmk}

\begin{prop}\label{a5Galois}Let $K$ be a totally real field. Let $\br:G_K\lra {\rm GSp}_4(\F_2)$ be an irreducible  
mod 2 Galois representation. Suppose that ${\rm Im}(\br)$ is isomorphic to $A_5$ and ${\rm Im}(\br)$ contains no element of 
type $(3,3)$.  
Assume further that for each complex conjugation $c$ of $G_K$, $\br(c)$ is of type $(2,2)$. 
Then there exists  an irreducible totally odd Galois 
representation $\ot:G_K\lra {\rm GL}_2(\F_4)$ satisfying 
\begin{enumerate}
\item $\br \simeq {\rm Sym}^3(\ot)$ as a representation to ${\rm GL}_4(\F_4)$;
\item ${\rm Im}(\ot)\simeq {\rm SL}_2(\F_4)\simeq A_5$;
\item  for each complex conjugation $c$ of $G_K$, $\ot(c)$ is conjugate to 
$s:=\begin{pmatrix}
0 & 1  \\
1 & 0 
\end{pmatrix}$.
\end{enumerate}
\end{prop}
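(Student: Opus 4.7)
The plan is to deduce this directly from Proposition \ref{ab-irr} together with the explicit description of $A_5 \simeq {\rm SL}_2(\F_4)$ given in Section \ref{A5}, with only the complex-conjugation condition requiring separate verification. Compared with Proposition \ref{s5Galois}, there is no need to descend to a quadratic extension because the image is already $A_5$; the content is essentially the identification of $\br$ with the symmetric cube of its natural $2$-dimensional factor.

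First, I would fix an isomorphism $\varphi:{\rm Im}(\br)\stackrel{\sim}{\lra} A_5\simeq{\rm SL}_2(\F_4)$ and define
$$\ot\ :\ G_K\twoheadrightarrow {\rm Im}(\br)\stackrel{\varphi}{\lra}{\rm SL}_2(\F_4)\hookrightarrow{\rm GL}_2(\F_4).$$
By construction ${\rm Im}(\ot)\simeq{\rm SL}_2(\F_4)\simeq A_5$, which yields (2). Moreover, $\ot$ is irreducible over $\F_4$, since ${\rm SL}_2(\F_4)$ acts transitively on $\mathbb{P}^1(\F_4)$ and therefore stabilizes no line; in particular $\ot$ is the tautological faithful representation denoted $\ot$ in Section \ref{repA5}. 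Applying Proposition \ref{ab-irr} to $\br$, with the isomorphism $\varphi$ fixed, gives $\br\simeq\ot_1={\rm Sym}^3\ot$ as representations of $G_K$ to ${\rm GL}_4(\F_4)$, which is precisely (1).

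It remains to verify (3). Fix a complex conjugation $c\in G_K$; by hypothesis $\br(c)$ has type $(2,2)$ in $S_6$, so in particular $\br(c)$ has order exactly $2$. Under $\varphi$ this means $\ot(c)$ is an element of order $2$ in ${\rm SL}_2(\F_4)$. Consulting the list of conjugacy classes in Section \ref{A5}, the only such class is represented by $\begin{pmatrix}1&0\\1&1\end{pmatrix}$. Since both this matrix and $s=\begin{pmatrix}0&1\\1&0\end{pmatrix}$ are non-identity matrices in ${\rm SL}_2(\F_4)$ with characteristic polynomial $(t+1)^2$ and minimal polynomial $(t+1)^2$, they are conjugate in ${\rm GL}_2(\F_4)$ by rational canonical form; a direct check shows the conjugator can be chosen in ${\rm SL}_2(\F_4)$ as well. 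Hence $\ot(c)$ is conjugate to $s$, establishing (3) and, simultaneously, the total oddness of $\ot$ in the sense appropriate for residual characteristic $2$.

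There is no real obstacle here: once Proposition \ref{ab-irr} is granted, the proof is a matter of unwinding the identification $A_5\simeq{\rm SL}_2(\F_4)$ and cross-checking the conjugacy class of $\ot(c)$ against the table in Section \ref{A5}. The only subtle point is the translation between the symplectic condition ``type $(2,2)$'' on $\br(c)$ in $S_6$ and the condition on $\ot(c)$ in ${\rm SL}_2(\F_4)$; this is handled by the fact that elements of type $(2,2)$ in $A_5\subset S_6$ are exactly the order-two elements (involutions of cycle type $(2)$ being odd, hence excluded), so there is no ambiguity.
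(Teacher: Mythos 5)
Your proposal is correct and follows essentially the same route the paper indicates: invoke Proposition \ref{ab-irr} to get $\br\simeq\ot_1={\rm Sym}^3\ot$ via the chosen isomorphism ${\rm Im}(\br)\simeq A_5\simeq{\rm SL}_2(\F_4)$, and then identify the conjugacy class of $\ot(c)$ from that of $\br(c)$ using the class data in Section \ref{A5}. The paper's one-line proof ("compute characteristic polynomials using Sections \ref{GSp4vsS6}, \ref{A5}, and Proposition \ref{ab-irr}") is exactly what you have unwound, with the only minor stylistic difference being that you phrase the oddness step via the uniqueness of the order-$2$ conjugacy class in ${\rm SL}_2(\F_4)$ (and the observation that $s$ lies in it, since $\det s=-1=1$ and $s\neq I_2$ is unipotent) rather than by explicitly matching characteristic polynomials of $\br(c)$ and ${\rm Sym}^3\ot(c)$; these are equivalent bookkeeping.
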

\begin{proof}As in the proof of the previous proposition, we have only to compute the 
characteristic polynomials by using the results in 
Section \ref{GSp4vsS6}, Section \ref{A5}, and Proposition \ref{ab-irr}.  
\end{proof}

\section{Automorphy}
\subsection{Automorphic Galois representations}
Let $F$ be a totally real field. For each place $v$ of $F$, let $F_v$ be the completion of $F$ along $v$.  
In this section we recall basic properties of cuspidal automorphic 
representations of ${\rm GSp}_4(\A_F)$ whose infinite components are 
discrete series representations. We basically follow the notation of Mok's article \cite{Mok} and add more 
necessarily ingredients for our purpose.  

For any place $v$ of $F$, we denote by $W_{F_v}$ the Weil group of $F_v$. 
Let $m_1,m_2,w$ be integers such that $m_1>m_2> 0$ and $m_1+m_2\equiv w+1$ mod 2. 
For the L-parameter $\phi_{(w;m_1,m_2)}:W_\R\lra {\rm GSp}_4(\C)$ defined by 
$$\phi_{(w;m_1,m_2)}(z)=|z|^{-w}\diag\Big(\Big(\frac{z}{\overline{z}}\Big)^{\frac{m_1+m_2}{2}},
\Big(\frac{z}{\overline{z}}\Big)^{\frac{m_1-m_2}{2}},
\Big(\frac{z}{\overline{z}}\Big)^{-\frac{m_1-m_2}{2}},
\Big(\frac{z}{\overline{z}}\Big)^{-\frac{m_1+m_2}{2}}\Big)$$
and 
$$\phi_{(w;m_1,m_2)}(j)=
\left(\begin{array}{cc}
0_2 & s \\
(-1)^w s & 0_2
\end{array}
\right)
,\ s=\left(\begin{array}{cc}
0 & 1 \\
1 & 0
\end{array}
\right).$$ 
By local Langlands correspondence the archimedean L-packet $\Pi(\phi_{(w;m_1,m_2)})$ corresponding to $\phi_{(w;m_1,m_2)}$ 
consists of two elements $\{\pi^H_{(w;m_1,m_2)},\ \pi^W_{(w;m_1,m_2)}\}$ whose 
central characters both satisfy $z\mapsto z^{-w}$ for $z\in \R^\times_{>0}$.  
These are essentially tempered unitary representations of ${\rm GSp}_4(\R)$ and tempered exactly when $w=0$. 
Since $m_2>0$, the representation $\pi^H_{(w;m_1,m_2)}$ is called a discrete series representation of 
minimal $K$-type $\uk=(k_1,k_2):=(m_1+1,m_2+2)$ which corresponds to an algebraic representation 
$$V_{\uk}:={\rm Sym}^{k_1-k_2}{\rm St}_2\otimes {\rm det}^{k_2}{\rm St}_2=
{\rm Sym}^{m_1-m_2-1}{\rm St}_2\otimes {\rm det}^{m_2+2}{\rm St}_2$$ of $K_\C={\rm GL}_2(\C)$. 
Here $K$ is the maximal compact subgroup of ${\rm Sp}_4(\R)$. 
The representation $\pi^H_{(w;m_1,m_2)}$ is  called  a  discrete series 
representation of minimal $K$-type 
$$V_{(m_1+1,-m_2)}={\rm Sym}^{m_1+m_2+1}{\rm St}_2\otimes {\rm det}^{-m_2}{\rm St}_2.$$ 

Fix an integer $w$.  
Let $\pi=\otimes'_{v}\pi_v$ be a cuspidal automorphic representation of ${\rm GSp}_4(\A_F)$ such that 
for each infinite place $v$, $\pi_v$ has L-parameter $\varphi_{(w;m_{1,v},m_{2,v})}$ with 
the parity condition $m_{1,v}+m_{2,v}\equiv w+1$ mod 2 and $m_2>0$. Let ${\rm Ram}(\pi)$ be the set of all 
finite places of which $\pi_v$ is ramified. 
Thanks to \cite{Mok} with \cite{GeeT} we can attach $\pi$ with Galois representations:
\begin{thm}\label{gal}(cf. Theorem 3.1 and Remark 3.3, Theorem 1.1 of \cite{Mok}) 
Assume that $\pi$ is neither CAP nor endoscopic. 
For each prime $p$ and $\iota_p:\bQ_p\stackrel{\sim}{\lra} \C$ 
there exists a continuous, semisimple Galois representation 
$\rho_{\pi,\iota_p}:G_F\lra {\rm GSp_4}(\bQ_p)$ such that 
\begin{enumerate}
\item $\nu\circ \rho_{\pi,\iota_p}(c_\infty)=-1$ for any complex conjugation $c_\infty$ in $G_F$ where 
$\nu$ is the similitude character of ${\rm GSp}_4$. 

\item $\rho_{\pi,\iota_p}$ is unramified for all finite places which do not belong to ${\rm Ram}(\pi)\cup\{v|p\}$;

\item for each finite place $v$ of $F$ not lying over $p$, the local-global compatibility holds:
$${\rm WD}(\rho_{\pi,\iota_p}|_{G_{F_v}})^{F-{\rm ss}}\simeq {\rm rec}^{{\rm GT}}_v(\pi_v\otimes |\nu|^{-\frac{3}{2}})$$
with respect to $\iota_p$ where ${\rm rec}^{{\rm GT}}_v$ stands for the local Langlands correspondence 
constructed by Gan-Takeda \cite{GT};

\item  for each finite place $v$ of $F$ lying over $p$,  $\rho_{\pi,\iota_p}|_{G_{F_v}}$ is crystalline and the local-global compatibility also holds up to semi-simplification. 

\item for each $v|p$ and an embedding $\sigma:F_v\hookrightarrow \bQ_p$, there is a unique embedding 
$v_\sigma:F\hookrightarrow \C$ such that $\iota_p\circ \sigma|_F=v_\sigma$. 
Then the representation $\rho_{\pi,\iota_p}|_{G_{F_v}}$ is Hodge-Tate of weights 
$$HT_\sigma(\rho_{\pi,\iota_p}|_{G_{F_v}})=\{\delta_{v_\sigma},\delta_{v_\sigma}+m_{2,v_\sigma},
\delta_{v_\sigma}+m_{1,v_\sigma},\delta_{v_\sigma}+m_{2,v_\sigma}+m_{1,v_\sigma} \}$$
where $\delta_{v_\sigma}=\ds\frac{1}{2}(w+3-m_{1,v_\sigma}-m_{2,v_\sigma})$.  
\end{enumerate}
\end{thm}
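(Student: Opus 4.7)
The plan is to deduce the theorem from the cited results of Mok and Gee--Taylor by transferring $\pi$ to $\gl_4$ and then invoking the now-standard machinery for attaching Galois representations to regular algebraic essentially self-dual cuspidal automorphic representations of $\gl_n$. First I would use the hypothesis that $\pi$ is neither CAP nor (weakly) endoscopic to apply Arthur's classification for $\gs_4$ in the form established by Mok: $\pi$ transfers to an isobaric, in fact cuspidal, automorphic representation $\Pi$ of $\gl_4(\A_F)$ with $\Pi^\vee \simeq \Pi\otimes (\omega_\pi^{-1}\circ\det)$, where $\omega_\pi$ is the central character of $\pi$. The archimedean parameters $\phi_{(w;m_{1,v},m_{2,v})}$ with $m_{1,v}>m_{2,v}>0$ translate via the inclusion ${}^L\gs_4\hookrightarrow{}^L\gl_4$ into archimedean parameters for $\Pi$ whose Hodge--Tate--Sen weights at each infinite place are the four distinct integers listed in (5), so $\Pi$ is regular algebraic. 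The parity $m_{1,v}+m_{2,v}\equiv w+1 \pmod 2$ guarantees that the similitude character has the right sign at infinity, so $\Pi$ is essentially self-dual of the correct type.

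Next I would apply the construction of compatible systems for such $\Pi$ (Clozel, Kottwitz, Harris--Taylor, Taylor, Chenevier--Harris, \cite{BGHT}, and further work) together with Caraiani's local-global compatibility at $\ell\neq p$ and the crystalline/Hodge--Tate compatibility at $v\mid p$. This yields a continuous semisimple $\rho_{\Pi,\iota_p}:G_F\lra \gl_4(\bQp)$ satisfying the local-global statements at all finite places. The essentially self-dual structure of $\Pi$, combined with the fact that the similitude sign attached to complex conjugation is $-1$ (an odd polarisation, reflecting the archimedean parameter's prescription that $\phi_{(w;m_{1,v},m_{2,v})}(j)$ has lower-right block $(-1)^w s$ paired with parity above), forces $\rho_{\Pi,\iota_p}$ to land in $\gs_4(\bQp)$ up to conjugation, providing $\rho_{\pi,\iota_p}:=\rho_{\Pi,\iota_p}$ together with item (1). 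Items (2) and (3) are Caraiani's local-global compatibility translated back from $\gl_4$ to $\gs_4$ via the Gan--Takeda LLC ${\rm rec}^{\rm GT}_v$, using the compatibility between the Gan--Takeda correspondence for $\gs_4(F_v)$ and the local Langlands correspondence for $\gl_4(F_v)$ (this is part of what is verified in \cite{Mok} and \cite{GeeT}).

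For items (4) and (5), at places $v\mid p$ the transfer $\Pi_v$ inherits from $\pi_v$ the property of being (cohomologically induced from) an unramified principal series outside ramification, so $\Pi_v$ is unramified and $\rho_{\Pi,\iota_p}|_{G_{F_v}}$ is crystalline by Caraiani's theorem for crystalline representations in the Fontaine--Mazur sense. The Hodge--Tate weights at each embedding $\sigma$ are read off from the archimedean $L$-parameter at $v_\sigma$ via the recipe that translates infinitesimal characters to Hodge--Tate--Sen weights, yielding the formula in (5) after the shift by $\delta_{v_\sigma}=\tfrac12(w+3-m_{1,v_\sigma}-m_{2,v_\sigma})$ that accounts for the half-integral twist $|\nu|^{-3/2}$ in item (3).

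The main obstacle, and the reason this theorem is attributed to Mok and Gee--Taylor rather than being self-contained, is the full $p$-adic local-global compatibility at $v\mid p$ including the crystallinity and the precise Hodge--Tate weights at all embeddings: this requires both the unconditional transfer from $\gs_4$ to $\gl_4$ via Arthur--Mok and the refined automorphy lifting/compatibility results of \cite{GeeT}, the latter being the source of the trace-formula dependence noted in Remark \ref{GeeT}. Everything else is a bookkeeping exercise in matching parameters.
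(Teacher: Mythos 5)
The paper states Theorem~\ref{gal} purely as a citation of Mok supplemented by Gee--Ta\"ibi and gives no proof of its own, so there is no internal argument to compare against; your outline is a faithful sketch of the construction those references actually carry out (transfer from $\mathrm{GSp}_4$ to $\mathrm{GL}_4$ under the non-CAP non-endoscopic hypothesis, apply the compatible-system machinery and local-global compatibility for regular algebraic essentially self-dual cuspidal representations of $\mathrm{GL}_4$, then descend to a $\mathrm{GSp}_4$-valued representation via the odd symplectic polarisation). One small attribution adjustment: the multiplicity/classification input enabling the transfer is Gee--Ta\"ibi's contribution in \cite{GeeT} (hence the trace-formula caveat noted in Remark~\ref{GeeT}), whereas the cited Mok paper supplies the Galois representations and local-global compatibility rather than the $\mathrm{GSp}_4$ classification itself.
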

\begin{dfn}\label{auto-gal} 
\begin{enumerate}
\item
Let $\rho:G_F\lra {\rm GSp}_4(\bQ_p)$ be an irreducible $p$-adic Galois representation. 
We say $\rho$ is automorphic if there exists a cuspidal automorphic representation $\pi$ of 
${\rm GSp}_4(\A_F)$ with $\pi_v$ a discrete series representation for any $v|\infty$  such that 
$\rho\simeq\rho_{\pi,\iota_p}$ as a representation to ${\rm GL}_4(\bQ_p)$. 
By definition, if $\rho$ is automorphic, then it is totally odd.  
\item  Let $\br:G_F\lra {\rm GSp}_4(\bF_p)$ be an irreducible mod $p$  Galois representation. 
We say $\br$ is automorphic if there exists a cuspidal automorphic representation $\pi$ of 
${\rm GSp}_4(\A_F)$ with $\pi_v$ a discrete series representation for any $v|\infty$  such that 
$\br\simeq \br_{\pi,\iota_p}$ as a representation which takes the values in ${\rm GL}_4(\bF_p)$. 
\end{enumerate}
\end{dfn} 
\begin{rmk}\label{switch}
\begin{enumerate}
\item For each holomorphic Hilbert-Siegel Hecke eigen cusp form $h$ over $F$ on $\mathcal{H}^d_2$, one can 
associate a cuspidal automorphic form on ${\rm GSp}_4(\A_F)$ and vise versa. We often identify these two forms. 
\item Let $h$ be a holomorphic Hilbert-Siegel Hecke eigen cusp form on $\mathcal{H}^d_2$ of parallel weight $3$ where $d=[F:\Q]$. 
Let $\pi_h$ be the corresponding cuspidal automorphic representation of ${\rm GSp}_4(\A_F)$. 
Then for each infinite place $v$, the local Langlands parameter at $v$ is given by $\phi_{(w;2,1)}$ for some $w\in \Z$. 
Conversely, if cuspidal automorphic representation $\pi$ of ${\rm GSp}_4(\A_F)$ is neither CAP nor endoscopic, one can associate such a form $h$ by using 
\cite{Wei3} for $F=\Q$ and \cite{GeeT} in general. 
Note that the results in \cite{GeeT} are conditional on the trace formula 
$($see the second paragraph in p.472 of \cite{GeeT}$)$.   
\end{enumerate} 
\end{rmk}

\subsection{Paritious Hilbert modular forms and Jacquet-Langlands correspondence}\label{JL} 
We refer \cite{Taylor1} and Section 3 of \cite{Kisin}.  
In this section, $p$ is any rational prime but we always remind the readers applications to $p=2$. 
Let $F$ be a totally real field of even degree $g$. 
For each finite place $v$ of $F$, let $F_v$ be 
the completion of $F$ along $v$, $\O_v$ its integer ring, $\varpi_v$ a uniformizer of $F_v$, and $\F_v$ the 
residue field of $F_v$.   
Let $D$ be a quaternion algebra with center $F$ which is 
ramified exactly at all the infinite places of $F$ and $\O_D$ be the ring of all integral quaternions of $D$. 
For each finite place $v$ of $F$, we fix an 
isomorphism $\iota_v:D_v:=D\otimes_FF_v\simeq {\rm GL}_2(F_v)$. 
We view $D^\times$ as an algebraic group over $F$ so that for any $F$-algebra $A$, $D^\times(A)$ outputs 
$(D\otimes _FA)^\times$ and similarly as an algebraic group scheme over $\O_F$ such that 
$D^\times (R)=(\O_D\otimes_{\O_F}R)^\times$ for any $\O_F$-algebra $R$.  

Let $K$ be a finite extension of $\Q_p$ contained in $\bQ_p$ with residue field $k$ and $\mathcal{O}$ the ring of integers, and 
assume that $K$ contains the images of all embeddings $F\hookrightarrow \bQ_p$. 

For each finite place $v$ lying over $p$ of $F$, let $\tau_v$ be  a smooth representation of ${\rm GL}_2(\mathcal{O}_{v})$ on a finite free 
$\O$ module $W_{\tau_v}$. We also view it as a representation of $D_v$ via $\iota_v$. 
Put $\tau:=\ds\otimes_{v|p}\otimes_{\sigma_v\in {\rm Hom}(F_v,\bQ_p)}{}^{\sigma_v}\tau_v$ which is a representation of ${\rm GL}_2(\mathcal{O}_{p})$ 
acting on $W_\tau:=\otimes_{v|p}\otimes_{\sigma_v\in {\rm Hom}(F_v,\bQ_p)}{}^{\sigma_v}W_{\tau_v}$. Suppose 
$\psi:F^\times\bs (\A^\infty_F)^\times\lra \mathcal{O}^\times$ is a continuous character so that 
for each $v|p$, $Z_{D^\times}(\mathcal{O}_{v})\simeq \mathcal{O}^\times_{v}$ acts on 
$W_{\tau_v}$ by $\psi^{-1}|_{\mathcal{O}^\times_{F_v}}$ where $Z_{D^\times}\simeq GL_1$ be the center of $D^\times$ 
as a group scheme over $\O_F$. Note that we put the discrete topology on $\mathcal{O}^\times$ and 
therefore such a character is necessarily of finite order. Let $U=\prod_v U_v$ be a compact open subgroup 
of $D^\times(\A^\infty_{F})\simeq {\rm GL}_2(\A^\infty_{F})$ such that $U_v\subset D^\times({O}_{F_v})$ 
for each finite place $v$ of $F$. 
Put $U_p:=\prod_{v|p}U_v$ and $U^{(p)}=\prod_{v\nmid p}U_v$. 
For any local $\mathcal{O}$-algebra $A$ put $W_{\tau,A}:=W_\tau\otimes_{\mathcal{O}} A$. Let 
$\Sigma$ be a finite set of finite places of $F$. 
For each $v\in \Sigma$, let $\chi_v:U_v\lra A^\times$ be a quasi character. Define $\chi_{\Sigma}:U\lra A^\times$ whose local component is $\chi_v$ if $v\in \Sigma$, the trivial representation otherwise. 

\begin{dfn}\label{dfn-2AMF}$(p$-adic algebraic quaternionic forms$)$ 
Let $S_{\tau,\psi}(U,A)$ denote the space of the functions $f:D^\times\bs D^\times(\A^\infty_{F})
\lra W_{\tau,A}$ such that 
\begin{itemize}
\item $f(gu)=\tau(u_p)^{-1}f(g)$ for $u=(u^{(p)},u_p)\in U=U^{(p)}\times U_p$ and $g\in D^\times(\A^\infty_{F})$;
\item $f(zg)=\psi(z)f(g)$ for $z\in Z_{D^\times}(\A^\infty_F)$ and $g\in D^\times(\A^\infty_{F})$.
\end{itemize}
Similarly, 
let $S_{\tau,\psi,\chi_{\Sigma}}(U,A)$ denote the space of the functions $f:D^\times\bs D^\times(\A^\infty_{F})\lra W_{\tau,A}$ such that 
\begin{itemize}
\item $f(gu)=\chi^{-1}_{\Sigma}(u)\tau(u_p)^{-1}f(g)$ for $u=(u^{(p)},u_p)\in U=U^{(p)}\times U_p$ and 
$g\in D^\times(\A^\infty_{F})$;
\item $f(zg)=\psi(z)f(g)$ for $z\in Z_{D^\times}(\A^\infty_F)$ and $g\in D^\times(\A^\infty_{F})$.
\end{itemize}
We say a function belongs to these spaces a $p$-adic algebraic quaternionic form. 
\end{dfn} 

Let $S$ be a finite set of finite places of $F$ containing 
all places $v\nmid p$ such that $U_v\neq D^\times(\mathcal{O}_{v})$.  
We define the (formal) Hecke algebra 
\begin{equation}\label{fha1}
\mathbb{T}^S_A:=A[T_{v},S_v]_{v\not\in S\cup\{v|p\}}
\end{equation}
where $T_{v}=[D^\times(\mathcal{O}_{v})\iota^{-1}_{v}(\diag(\varpi_v,1))D^\times(\mathcal{O}_{v})],
S_v=[D^\times(\mathcal{O}_{v})\iota^{-1}_{v}(\diag(\varpi_v,\varpi_v))D^\times(\mathcal{O}_{v})]$ 
are usual Hecke operators. 
It is easy to see that both of $S_{\tau,\psi}(U,A)$ and $S_{\tau,\psi,\chi_{\Sigma}}(U,A)$ have a natural action of $\mathbb{T}^S_A$ (cf. Definition 2.2 of \cite{Dem}).

Let $U=U^{(p)}\times U_p$ be as above. As explained in Section 1 \cite{Taylor1}, if we take the 
double coset decomposition 
${\rm GL}_2(\A^\infty_F)=\coprod_i D^\times t_i UZ_{D^\times}(\A^\infty_F)$, then 
$$S_{\tau,\psi}(U,A)\simeq \bigoplus_i W^{(UZ_{D^\times}(\A^\infty_F)\cap t^{-1}_iD^\times t_i)/F^\times}_\tau.$$
The group $(UZ_{D^\times}(\A^\infty_F)\cap t^{-1}_iD^\times t_i)/F^\times$ is trivial for all $t_i$ when 
$U$ is sufficiently small (see p.623 of \cite{Kisin}). Henceforth, we keep this condition until the end of 
the section. It follows from this that the functor $W_\tau \mapsto S_{\tau,\psi}(U,A)$ is exact 
(cf. Lemma 3.1.4 of \cite{Kisin-finite}).

Fix an isomorphism $\iota:\bQ_p\simeq \C$. Let 
$$S_{\tau,\psi}(U_p,A):=\varinjlim_{U^{(p)}}S_{\tau,\psi}(U^{(p)}\times U_p,A)$$
where $U^{(p)}$ tends to be small. 
For each $(\underline{k},\underline{w})=((k_\sigma)_\sigma,(w_\sigma)_\sigma)\in \Z^{{\rm Hom}(F,\bQ_p)}_{>1}\times \Z^{{\rm Hom}(F,\bQ_p)}$ such that 
$k_\sigma+2w_\sigma$ is independent of $\sigma$. This independence is called the parity condition and it is 
necessary to arithmetic structure on the space of Hilbert modular forms.  
Let $\psi_\C:F^\times\bs \A^\times_F\lra \C^\times$ be 
the character defined by 
\begin{equation}\label{psi}\psi_\C(z)=\iota(N(z_p)^{\delta-1}\psi(z^\infty))N(z_\infty)^{1-\delta}
\end{equation} for 
$z=(z_p,z^{(p)},z_\infty)\in  \A^\times_F$ where the symbol $N$ stands for the norm. 
For each $\sigma\in {\rm Hom}(F,\bQ_p)$, there exist a unique pair of $v|p$ and an embedding $\sigma_v:F_v\lra \bQ_p$ such that $\sigma_v|_F=\sigma$. There we can rewrite $(\sigma)_{\sigma \in {\rm Hom}(F,\bQ_p)}=(\sigma_v)_
{v|p,\ \sigma_v\in {\rm Hom}(F_v,\bQ_p)}$.  
Let us define the representation $\tau_{(\underline{k},\underline{w}),A}$ of 
${\rm GL}_2(\O_p)=\prod_{v|p}{\rm GL}_2(\O_v)$ by 
\begin{equation}\label{alg-rep}
\tau_{(\underline{k},\underline{w}),A}=\bigotimes_{v|p}\bigotimes_{\sigma_v\in {\rm Hom}(F,\bQ_p)}
{\rm Sym}^{k_{\sigma_v}-2}{\rm St}_2(A)\otimes {\rm det}^{w_{\sigma_v}}A
\end{equation}
where ${\rm St}_2$ is the standard representation of dimension two. We often drop the subscript $A$ from 
$\tau_{(\underline{k},\underline{w}),A}$ which never causes any confusion.  
Notice that $\tau_{(\underline{k},\underline{w}),\O}\otimes_{\O,\iota}\C$ is the algebraic representation of ${\rm GL}_2(\C)$ of the 
highest weight $(\underline{k},\underline{w})$ so that the center acts by $z\mapsto z^{\delta-1},\ \delta=k_\sigma+2w_\sigma-1$ for 
$z\in \C^\times$. We write 
\begin{equation}\label{n-cl}
S_{\underline{k},\underline{w},\psi}(U_p,A):=
S_{\tau_{(\underline{k},\underline{w})},\psi}(U_p,\O)
\end{equation}
for simplicity. 

By Lemma 1.3-2 of \cite{Taylor1}, we have an isomorphism of $D^\times(\A^{p,\infty})$-modules 
\begin{equation}\label{classical}
(S_{\underline{k},\underline{w},\psi}(U_p,\O)/
S^{{\rm triv}}_{\underline{k},\underline{w},\psi}(U_p,\O))\otimes_{\O,\iota}\C\simeq \bigoplus_{\pi}\pi^{\infty,p}\otimes \pi^{U_p}_p 
\end{equation}
where $\pi$ turns over regular algebraic cuspidal automorphic representations of ${\rm GL}_2(\A_F)$ such that 
$\pi$ has the central character $\psi_\C$ and $S^{{\rm triv}}_{\underline{k},\underline{w},\psi}(U_p,\O)$ is 
zero unless $(\underline{k},\underline{w})=(2\cdot\textbf{1},w\cdot\textbf{1})$ with $w\in\Z$ and $\textbf{1}=(1,\ldots,1)\in \Z^{{\rm Hom}(F,\bQ_p)}$, in which case let it denote the subspace of 
$S_{\tau_{(\underline{k},\underline{w})},\psi}(U_p,\O)$ consisting of functions which factor through the reduced norm of $D^\times(\A_F)$. 

For each $\tau_{(\underline{k},\underline{w}),\C}$, one can also consider the space of Hilbert modular cusp forms on ${\rm GL}_2(\A_F)$ of level $U$ and of weight $(\underline{k},\underline{w})$ and 
their geometric counterparts (see Section 1.5 of \cite{Di}).  
Thanks to many contributors (see \cite{Jarvis} and the reference there), for each (adelic) Hilbert modular 
Hecke eigen cusp form $f$ of weight $(\underline{k},\underline{w})$ and of level $U$, one can attach 
an irreducible $p$-adic Galois representation $\rho_{f,\iota_p}:G_F\lra {\rm GL}_2(\bQ_p)$ for any rational prime $p$ 
and a fixed isomorphism $\iota_p:\bQ_p\simeq \C$. 
The construction also works for $\underline{k}\ge \textbf {1}$ (in the lexicographic order). In particular, in the case when $\underline{k}=\textbf{1}$ (parallel weigh one), 
its image is finite and it gives rise to an Artin representation $\rho_f:G_F\lra {\rm GL}_2(\C)$ (see \cite{RT}). 
Taking a suitable integral lattice, we consider the reduction $\br_{f,p}:G_F\lra {\rm GL}_2(\bF_p)$ of 
$\rho_{f,p}$ or $\rho_f$ modulo 
the maximal ideal of $\bZ_p$. 

Recall that we have fixed $\bQ_p\simeq \C$ and under this isomorphism we identify ${\rm Hom}(K,\bQ_p)$ with 
${\rm Hom}(K,\C)$ for any number field $K$. 
If $M$ is a totally real quadratic extension of a totally real field $F$. Put $d=[F:\Q]$ and then $m=[M:\Q]=2d$. 
For each embedding $\sigma_i\in {\rm Hom}(F,\bQ_p)\ (1\le i\le d)$, let $\sigma^{(1)}_i,\ \sigma^{(2)}_i$ 
be the extensions of $\sigma_i$ to $M$ with any ordering for $\sigma^{(1)}_i,\ \sigma^{(2)}_i$.  Put 
$$S^{(1)}_M=\{\sigma^{(1)}_i\ |\ 1\le i\le d\},\ S^{(2)}_M=\{\sigma^{(2)}_i\ |\ 1\le i\le d\}.$$
According to this notation, we rewrite $(\underline{k},\underline{w})=((k_\sigma)_\sigma,(w_\sigma)_\sigma)\in \Z^{{\rm Hom}(M,\bQ_p)}_{\ge 1}\times \Z^{{\rm Hom}(M,\bQ_p)}$ as 
$$(\underline{k},\underline{w})=((\underline{k}^{(1)},\underline{k}^{(2)}),
(\underline{w}^{(1)},\underline{w}^{(2)}))\in 
(\Z^d_{\ge 1})^2\times (\Z^d)^2.$$
Put $\textbf{1}_m=(1,\ldots,1)\in \Z^m$ and $\textbf{1}_d=(1,\ldots,1)\in \Z^d$.  
 
The following result is a balk of this paper:
\begin{prop}\label{cong}Put $p=2$. 
Assume that $M/F$ is a totally real quadratic extension of a totally real field $F$. Put $d=[F:\Q]$ and $m=[M:\Q]=2d$. 
Let $f$ be a Hilbert modular Hecke eigen cusp form on ${\rm GL}_2(\A_M)$ of parallel weight one such that 
$\br_{f,2}$ is irreducible. There exists a Hilbert modular Hecke eigen cusp form $g$ on ${\rm GL}_2(\A_M)$ such that 
\begin{enumerate}
\item $\br_{f,2}\simeq \br_{g,2}\otimes \psi$ for some continuous character $\psi:G_M\lra \bF^\times_2$; 
\item the character corresponding to the central character of $g$ under (\ref{psi}) 
is trivial; 
\item $g$ is of weight $(\underline{k},\underline{w})$ with 
$\underline{k}=2\cdot \textbf{1}_m  ($parallel weight 2$)$ for some $\underline{w}\in \Z^m$. 
\end{enumerate} 
Further $g$ can be congruent modulo a prime lying over 2 to a Hilbert modular Hecke eigen cusp form of weight $(\underline{k},\underline{w}')$ with 
 $$\underline{k}=(2\cdot \textbf{1}_d,4\cdot \textbf{1}_d)$$ for some $\underline{w}'\in \Z^m$.      
\end{prop}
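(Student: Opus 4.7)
The plan is to construct $g$ by two successive weight-raising steps using Hasse invariants modulo $2$, preceded by a character twist. The crucial input is that at $p=2$ one has $p-1=1$, which is exactly the weight gap bridged by the parallel Hasse invariant; partial Hasse invariants then yield the second congruence.

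Since $f$ has parallel weight one, its central character under (\ref{psi}) is a finite-order character $\psi_0:F^\times\bs(\A^\infty_F)^\times\to\O^\times$. By class field theory one chooses a finite-order Hecke character $\eta$ of $M$ whose associated Galois character $\psi:G_M\to\bF^\times_2$ has the property that $f':=f\otimes\eta^{-1}$ has its (\ref{psi})-central character trivial and $\br_{f,2}\simeq\br_{f',2}\otimes\psi$. This reduces us to producing $g$ satisfying items (2) and (3) with $\br_{g,2}\simeq\br_{f',2}$. The parallel Hasse invariant $H$ on the Hilbert modular variety for $M$ has parallel weight $p-1=1$ modulo $2$, so $\overline{f'}\cdot H$ is a mod $2$ Hilbert modular eigenform of parallel weight $2$ whose Hecke eigenvalues away from $2$ coincide with those of $\overline{f'}$. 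Invoking the Hilbert-modular version of the Deligne--Serre lifting lemma (\cite{Jarvis}, \cite{Taylor1}) produces a characteristic-zero Hilbert Hecke eigen cusp form $g$ of parallel weight $2$ with $\br_{g,2}\simeq\br_{f',2}$, and a final twist by a Hecke character of trivial Galois reduction enforces (2).

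For the congruence to a form of weight $\underline{k}=(2\cdot\textbf{1}_d,4\cdot\textbf{1}_d)$, I would use the partial Hasse invariants at primes $v\mid 2$ of $M$ as developed in \cite{DLP} and \cite{P}. Each embedding $\sigma:F\hookrightarrow\bQp$ extends to exactly two embeddings of $M$, giving the splitting ${\rm Hom}(M,\bQp)=S^{(1)}_M\sqcup S^{(2)}_M$; one forms a product $H'$ of partial Hasse invariants whose total weight vector equals $2$ on each embedding in $S^{(2)}_M$ and $0$ on each embedding in $S^{(1)}_M$. Then $\overline{g}\cdot H'$ is a mod $2$ Hilbert modular Hecke eigenform of weight $(2\cdot\textbf{1}_d,4\cdot\textbf{1}_d)$ with Hecke eigenvalues away from $2$ congruent to those of $\overline{g}$, and a further application of the Hilbert-modular lifting lemma produces the desired characteristic-zero form congruent to $g$ modulo a prime above $2$.

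The main obstacle is constructing the product $H'$ with exactly the prescribed support on $S^{(2)}_M$: the naive partial Hasse invariants at odd $p$ have weight of the shape $(0,\ldots,0,p,0,\ldots,0,-1,0,\ldots,0)$ and can be combined in straightforward ways, but at $p=2$ the analogous invariants have particularly small weights and combining them may force a detour through non-paritious intermediate weights before arriving at the paritious target $(2\cdot\textbf{1}_d,4\cdot\textbf{1}_d)$; this is precisely where the non-paritious theory developed in \cite{DLP} and \cite{P} is indispensable.
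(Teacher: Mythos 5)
Your proposal takes a genuinely different route from the paper's, and the second half has a real gap.

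The paper's proof never touches Hasse invariants directly on the Hilbert modular variety in the way you propose. Instead, after multiplying by a \emph{high} power of the parallel Hasse invariant and lifting to characteristic zero, it immediately transfers via Jacquet--Langlands to a definite-quaternion algebraic form and works on that side for the rest of the argument. This matters because on the quaternionic side the space $S_{\tau,\psi}(U_p,A)$ is a finite free $A$-module and the functor $W_\tau\mapsto S_{\tau,\psi}(U,A)$ is exact, so reduction mod $2$ is surjective and weight changes are implemented by exact sequences of ${\rm GL}_2(\O_v)$-modules. To come down to parallel weight $2$ the paper restricts to the pro-unipotent level $U_{1,2}$ and observes that $\tau_{(\uk,\uw),\F}$ is a successive extension of trivial $U_{1,2}$-representations; this is the opposite direction from your single multiplication by the weight-$1$ Hasse invariant, and it avoids having to address whether a mod $2$ weight-$2$ geometric Hilbert modular form lifts (Deligne--Serre style lifting in low parallel weight for Hilbert modular varieties is not automatic, which is precisely why one passes to the quaternionic side where lifting is free). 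Your step here is plausible in outline but elides that question.

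The genuine gap is in the second half, the congruence to weight $(2\cdot\textbf{1}_d,4\cdot\textbf{1}_d)$. The partial Hasse invariants cannot in general produce the required weight shift at $p=2$. Within a single prime $v\mid 2$ of $M$ the partial Hasse invariants shift weight by cyclic translates of $(2,-1,0,\ldots,0)$ along the Frobenius orbit of ${\rm Hom}(\F_v,\bF_2)$, and the $\Z$-span of these vectors is an index-$(2^{f_v}-1)$ sublattice of $\Z^{f_v}$ where $f_v=[\F_v:\F_2]$. Already for $f_v=2$ (the generic case when a prime of $F$ over $2$ is inert in $M$, which is exactly when the two embeddings $\sigma^{(1)}_i,\sigma^{(2)}_i$ of $M$ lie in a common Frobenius cycle) the required shift $(0,2)$ is not in the span of $\{(2,-1),(-1,2)\}$: the system $2a-b=0$, $-a+2b=2$ has no integer solution. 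So no product of partial Hasse invariants achieves the target weight; this is not a paritiousness issue that \cite{DLP} or \cite{P} can fix, but an honest lattice obstruction. The paper sidesteps this completely by using the short exact sequence of ${\rm GL}_2(\F_v)$-modules
\begin{equation*}
0\lra {}^{\sigma_v}({\rm St}_2(\F))^{(2)}\lra {}^{\sigma_v}{\rm Sym}^2{\rm St}_2(\F)\lra {}^{\sigma_v}{\rm det}\F\lra 0
\end{equation*}
on the quaternionic side, applied one embedding at a time at exactly the embeddings in $S^{(2)}_M$; exactness of the quaternionic functor gives a Hecke-equivariant surjection lifting a form with $\det$-coefficient at $\sigma_v$ to one with ${\rm Sym}^2{\rm St}_2$-coefficient there (i.e., weight $4$ instead of $2$), with no constraint from the Frobenius structure of the residue fields. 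You should replace the partial Hasse invariant step by this module-theoretic device, or find some other mechanism that operates embedding-by-embedding rather than through the geometry of the special fibre.
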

\begin{proof}Let $U$ be a sufficiently small open compact subgroup of ${\rm GL}_2(\A_M)$ which fixes $f$. 
Fix $\underline{w}=w\cdot\textbf{1}_m$ for some $w\in \Z$ in the weight of $f$.  
Let $K$ be a sufficiently large finite extension of $\Q_2$ in $\bQ_2$ with the integer ring $\O$ such that 
$K$ contains all Hecke eigen values of $f$ for $\mathbb{T}^S_{\O}$. Let $\F$ be the residue field of $K$. 
As in \cite{Di}, we can view $f$ a geometric Hilbert modular form over $\O$ via a classical Hilbert modular form 
associated to $f$ and 
consider its base change $\overline{f}$ to $\F$. By multiplying a high power of Hasse invariant, we 
have another form $\overline{g}_1$ of weight $(\underline{k},\underline{w})$ with 
$\underline{k}=n\cdot \textbf{1}_m,\ n\gg 0$ such that  
\begin{enumerate}
\item $\overline{g}_1$ is a liftable to 
a geometric Hilbert modular Hecke eigen form $g_1$ over $\O$ by enlarging $\O$ if necessary;
\item $\br_{g_1,2}\simeq \br_{f,2}$.   
\end{enumerate}
Let us get $g_1$ back to the classical Hilbert modular Hecke eigen cusp form and by Jacquet-Langlands correspondence, 
we have a $2$-adic algebraic quaternionic Hecke eigen form $h_1$ in $S_{\tau_{(\underline{k},\underline{w})},\psi}(U,\O)$ corresponding to $g_1$ 
where $\psi:M^\times\bs \A^{\infty}_M\lra \O^\times$ be the finite character corresponding to the central 
character of $g_1$ under (\ref{psi}). For each finite place $v$ of $M$ lying of over $2$, 
let $U_{1,v}$ be the subgroup $U_v$ consisting of all elements congruent to 
$\begin{pmatrix}
1 & \ast  \\
0 & 1 
\end{pmatrix}$. Put $U_{1,2}:=\prod_{v|2}U_{1,v}$. By definition, $\overline{h}_1\in 
S_{\tau_{(\underline{k},\underline{w})},\psi}(U^{(2)}\times U_{1,2},\F)$. 
Since $U_{1,2}$ acts on $\tau_{(\underline{k},\underline{w}),\F}$ unipotently, 
it can be written as an successive extension of the trivial representation of  $U_{1,2}$. 
This successive extension commutes with Hecke actions and one can find 
a Hecke eigen form $\overline{h}_2\in S_{\tau_{(2\cdot \textbf{1},\underline{w})},\psi}(U^{(2)}\times U_{1,2},\F)$. 
Further, notice that $(\F^\times)^2=\F^\times$ and by twisting, we may assume that the reduction 
$\overline{\psi}$ of $\psi$ is trivial. 
Since $U$ is taken to be sufficiently large, there exists a lift 
$h_2\in S_{\tau_{(2\cdot \textbf{1},\underline{w})},\psi^{{\rm triv}}}(U^{(2)}\times U_{1,2},\O)$ of $\overline{h}_2$ 
where $\psi^{{\rm triv}}$ stands for the trivial character.  
By using Lemma 6.11 of \cite{DS}, $h_2$ can be a Hecke eigenform by multiplicity one. 
Further by Theorem 1.3-4 of \cite{Taylor1}, $h_2$ is non-trivial since $\ot$ is absolutely irreducible. 
The desired form $g$ is obtained as the image of $h_2$ under the Jacquet-Langlands correspondence. 

Let us keep the form $\overline{h}_2\in S_{\tau_{(2\cdot \textbf{1},\underline{w})},\overline{\psi}^{{\rm triv}}}(U^{(2)}\times U_{1,2},\F)$. For each finite place $v$ of $M$ lying over 2 and embedding 
$\sigma_v:M_v\lra \bQ_2$ which induces an embedding $\F_v\hookrightarrow \F$, let us observe the following exact sequence of ${\rm GL}_2(\F_v)$-modules 
\begin{equation}\label{ext}
0\lra {}^{\sigma_v}({\rm St}_2(\F))^{(2)}\lra {}^{\sigma_v}{\rm Sym}^2{\rm St}_2(\F)\lra {}^{\sigma_v}{\rm det}\F\lra 0.
\end{equation} 
Here the superscript ``$\sigma_v$" means the twisted representation induced by the embedding $\F_v\hookrightarrow \F$. 
We apply this exact sequence to $S^{(2)}_M$. 
Since ${}^{\sigma_v}{\rm det}:Z_D(\O_v)\lra \F^\times$ can be viewed as a square of a character, by twisting, 
we have a Hecke eigen lift to  
$\overline{h}_3\in S_{\tau_{(2\cdot \textbf{1}_d,4\cdot \textbf{1}_d,\underline{w}')},{\rm det}^{-1}}(U^{(2)}\times U_{1,2},\F)$. Twisting it back to $\overline{h}_4\in S_{\tau_{(2\cdot \textbf{1}_d,4\cdot \textbf{1}_d,\underline{w}')},\overline{\psi}^{{\rm triv}}}(U^{(2)}\times U_{1,2},\F)$.
We have a lift $g$ of $\overline{h}_4$ which has the desired properties.   
\end{proof}

\subsection{Non-paritious Hilbert modular forms}
We refer \cite{DLP}, \cite{P} for non-paritious Hilbert modular forms.   
Keep the notation in the previous section. 
We can also consider 
Hilbert modular forms on ${\rm GL}_2(\A_M)$ of non-paritious weight. 
However we will immediately encounter the lack of some arithmetic structures for such forms. 
In fact, the automorphic representation associated to a non-paritious Hilbert cusp form is non-algebraic and 
its Hecke field is not finite over $\Q$. This explains that we can not construct any $p$-adic Galois representations 
for such a form which takes the values in ${\rm GL}_2(\bQ_p)$ for any $p$. However, 
if we lift the cuspidal automorphic representation to a suitable group by some Langlands functoriality, 
it can be algebraic, hence we have a Galois representation. 
In \cite{P} with his related works, 
Patrikis has studies there objects throughly. We will use some results in \cite{DLP} which is related to  
\cite{P} and it is rather suitable in our purpose. 
 
Let us recall our setting. 
Let $M/F$ be a real quadratic extension of a totally real field $F$ with the Galois group 
${\rm Gal}(M/F)=\langle \iota \rangle$. Put $d=[F:\Q]$ and $m=[M:\Q]=2d$. 
Put $H={\rm Ker}({\rm Nr})={\rm Ker}(D^\times\stackrel{{\rm Nr}}{\lra}GL_1)$ where ${\rm Nr}$ stands 
for the reduced norm. Clearly, $H(\A^{\infty}_{M})\simeq {\rm SL}_2(\A_M)$.   
For each non-paritious weight $\uk\in \Z^m_{> 1}$ and an open compact subgroup $U$ of 
$H(\A^{\infty}_{M})$ such that $U_v\subset H(\O_v)$ for each finite place $v$ of $M$, we can also consider 
the space of $p$-adic algebraic quaternionic forms on $H(\A_M)$ which we denote it by $S_{\underline{k},\psi}(U_p,\O)$ 
where the weight corresponds to $\tau_{(\uk,\underline{0}),\O}|_{U_2\cap H(\A^\infty_M)}$ with 
$\underline{0}=(0,\ldots, 0)\in \Z^m$. 
We define the (formal) Hecke algebra for $H$ by 
\begin{equation}\label{fha2}
\mathbb{T}^{S,H}_A:=A[T_{2,v}]_{v\not\in S\cup\{v|p\}}
\end{equation}
where $T_{2,v}=[H(\mathcal{O}_{v})\iota^{-1}_{v}(\diag(\varpi_v,\varpi^{-1}_v))H(\mathcal{O}_{v})]$ and 
$S$ is as in (\ref{fha1}).  
As in the case of paritious weight, if $U$ is sufficiently small, then 
the reduction map $S_{\underline{k},\psi}(U_p,\O)\lra S_{\underline{k},\psi}(U_p,\F)$ 
is surjective. 

Let us act the $d$-th symmetric group $S_d$ diagonally on $\Z^m=\Z^d\times \Z^d$. 
Similarly, $(\sigma_1,\ldots,\sigma_d)\in (S_2)^d$ acts on $\Z^d\times \Z^d$ by 
$$((x^{(1)}_1,\ldots,x^{(1)}_d),(x^{(2)}_1,\ldots,x^{(2)}_d))
\mapsto ((x^{(\sigma_1(1))}_1,\ldots,x^{(\sigma_d(1))}_d),(x^{(\sigma_1(2))}_1,\ldots,x^{(\sigma_d(2))}_d)).$$
Then these two actions induce the action of $(S_2)^d\rtimes \Delta S_d$ on $\Z^d\times \Z^d$ where each $\tau\in S_d$ acts on $(S_2)^d$ by 
$(\sigma_1,\ldots,\sigma_d)\mapsto (\sigma_{\tau(1)},\ldots,\sigma_{\tau(d)})$. 
Then we will prove the following: 
\begin{prop}\label{non-pari}
Put $p=2$. Keep the notation as above. 
Let $f$ be a Hilbert modular Hecke eigen cusp form on ${\rm GL}_2(\A_M)$ of parallel weight one such that 
$\br_{f,2}$ is irreducible. There exists a Hilbert modular Hecke eigen cusp form $g$ on ${\rm GL}_2(\A_M)$ such that 
\begin{enumerate}
\item the character corresponds to the central character of $g$ under (\ref{psi}); 
\item $g$ is of weight $(\uk,\uw)$ with 
$\uk=(2\cdot \textbf{1}_d,3\cdot \textbf{1}_d)\in \Z^{S^{(1)}_M}\times \Z^{S^{(2)}_M}$ for some $\underline{w}\in \ds\frac{1}{2}\Z^m$. 
\item $\br_{f,2}\otimes {}^\iota\br_{f,2}\simeq \br_{\Pi,2}\otimes \psi$ for some continuous character $\psi:G_M\lra \bF^\times_2$ where $\Pi$ is the automorphic induction of $\pi_g$ to ${\rm GL}_4(\A_F)$ and 
$\br_{\Pi,2}$ is the reduction of $\rho_{\Pi,\iota_2}:G_F\lra {\rm GL}_4(\bQ_2)$ 
constructed in Theorem 5.1 of \cite{DLP}.
\end{enumerate}
\end{prop}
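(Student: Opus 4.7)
The plan is to follow the strategy of Proposition \ref{cong} and adapt it to produce a Hecke eigenform of the non-paritious weight $(2\cdot\textbf{1}_d, 3\cdot\textbf{1}_d)$. Since the weight parities differ between the embeddings in $S^{(1)}_M$ and those in $S^{(2)}_M$, the paritious $D^\times$-setting of Proposition \ref{cong} must be replaced by the $H$-form setting introduced in \S\ref{JL}, in which the center constraint forcing paritiousness disappears; this is what permits $\uw \in \frac{1}{2}\Z^m$ in the statement.

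First I would run the opening steps of the proof of Proposition \ref{cong} to produce a Hecke eigenform $\overline{h}_2 \in S_{\tau_{(2\cdot\textbf{1}_m, \uw)}, \overline{\psi}^{\rm triv}}(U^{(2)} \times U_{1,2}, \F)$ of parallel weight two, congruent to $f$ modulo a prime above $2$, with trivial residual central character. Then, in place of the exact sequence (\ref{ext}) used at the end of Proposition \ref{cong} to shift weight $2$ to weight $4$ at the $S^{(2)}_M$-embeddings, I would pull $\overline{h}_2$ back to the $H$-setting and extract, at each $S^{(2)}_M$-embedding, the Frobenius-twist subrepresentation $\mathrm{St}_2(\F)^{(2)} \hookrightarrow \mathrm{Sym}^2\mathrm{St}_2(\F)$. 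After relabelling embeddings via the Frobenius of the residue field and a suitable twist, this produces a Hecke eigenform in $S_{(2\cdot\textbf{1}_d, 3\cdot\textbf{1}_d), \overline{\psi}^{\rm triv}}(U^{(2)} \times U_{1,2}, \F)$ on $H$ of the required non-paritious weight.

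Because $U$ is sufficiently small, the reduction map from $\O$-coefficients to $\F$-coefficients is surjective (as recorded in \S\ref{JL}), lifting this residual $H$-form to characteristic zero; a multiplicity-one argument for the Hecke algebra $\mathbb{T}^{S, H}_A$ analogous to Lemma 6.11 of \cite{DS} selects a Hecke eigenlift. Repackaging the resulting $H$-form as a $\mathrm{GL}_2$-form absorbs the half-integral data into $\uw$ and yields the desired non-paritious Hilbert modular Hecke eigen cusp form $g$, with trivial residual central character preserved by the same twisting argument used in Proposition \ref{cong}. This establishes items (1) and (2) of the statement.

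For item (3), I would invoke Theorem 5.1 of \cite{DLP}, which attaches to the non-paritious cusp form $g$ a $4$-dimensional Galois representation $\rho_{\Pi, \iota_2}: G_F \to \mathrm{GL}_4(\bQ_2)$ associated with the automorphic induction $\Pi$ of $\pi_g$ to $\mathrm{GL}_4(\A_F)$. Comparing characteristic polynomials of Frobenii at unramified places, using the congruence $\br_{g, 2} \simeq \br_{f, 2} \otimes \psi_0$ produced above together with the twisted-tensor-product description of $\rho_{\Pi, \iota_2}|_{G_M}$ intrinsic to the \cite{DLP} construction, would give $\br_{f, 2} \otimes {}^\iota\br_{f, 2} \simeq \br_{\Pi, 2} \otimes \psi$ for a suitable character $\psi: G_M \to \bF^\times_2$. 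The main obstacle I anticipate is the weight-shifting step: one must verify that the Frobenius-twist extraction in the $H$-setting is genuinely Hecke-equivariant for $\mathbb{T}^{S, H}_A$, and that the non-paritious form so obtained is correctly matched on the Galois side with the $4$-dimensional construction of \cite{DLP}.
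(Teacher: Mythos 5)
Your overall strategy matches the paper's: pass to the $H$-setting to escape the central-character parity constraint, use the exact sequence (\ref{ext}) to shift from parallel weight $2$ to weight $(2\cdot\textbf{1}_d,3\cdot\textbf{1}_d)$ modulo $2$, relabel embeddings to absorb the Frobenius twist, lift to characteristic zero on $H$ and then back to $D^\times(\A_M)$, and finish by invoking the results of \cite{DLP}. Two small corrections: the lift $h_2$ is produced by the Deligne--Serre lemma alone, and the paper explicitly notes that multiplicity one \emph{fails} here (it does not claim $\overline{h}_2$ equals the reduction of $h_2$, only that the Hecke eigensystems agree); and the final transfer to a $\mathrm{GL}_2$-form uses Theorem 6.10 of \cite{DLP} rather than Theorem 5.1, which is used only for the Galois representation of $\Pi$.

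The genuine gap is in your item (3), where you write down a congruence ``$\br_{g,2}\simeq\br_{f,2}\otimes\psi_0$.'' This statement is not well-posed: $g$ is a \emph{non-paritious} Hilbert cusp form, and, as the paper emphasizes in the preamble to this proposition, the Hecke field of such a form is not finite over $\Q$ and there is no $2$-adic Galois representation $\rho_{g,2}$ valued in $\mathrm{GL}_2(\bQ_2)$; in particular $\br_{g,2}$ does not exist. Indeed, the individual eigenvalues $a_{T_v}(g)$ do not lie in $\O$. The paper's proof circumvents this by working only with the products $a_{T_v}(g)\,a_{T_{{}^\iota v}}(g)$, which \emph{do} lie in $\O$ (they are Hecke eigenvalues of the automorphic induction $\Pi$), and by noting that the residual value is pinned down as the unique square root of $a_{T_v}^2(f)\,a_{T_{{}^\iota v}}^2(f)\bmod m_{\O}$ since the residue characteristic is $2$. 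You would need to replace the ill-formed appeal to $\br_{g,2}$ with this eigenvalue-product comparison before applying Brauer--Nesbitt to conclude $\br_{f,2}\otimes{}^\iota\br_{f,2}\simeq\br_{\Pi,2}\otimes\psi$.
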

\begin{proof}As in the proof of Proposition \ref{cong}, 
there exists $\overline{h}\in S_{2\cdot \textbf{1},\underline{w},\overline{\psi}^{{\rm triv}}}(U^{(2)}\times U_{1,2},\F)$ for some $w\in \Z^m$ such that the Hecke eigen system of $\overline{h}$ coincides with one of 
the original $f$ up to twist where $U^{(2)}\times U_{1,2}\subset D^\times(\A^\infty_M)$ is in the proof there.  
Let us restrict $\overline{h}$ to $(U^{(2)}\times U_{1,2})\cap H(\A^\infty_M)$ and denote it by 
$\overline{h}$ again. For each finite place $v\not\in S\cup\{v|2\}$ of $M$, let $\overline{a}_{T_{2,v}}(\overline{h})$ 
be the eigenvalue of $T_{2,v}$ for $\overline{h}$ and $a_{T_{v}}(f)$ for $f$. Then it is easy to see 
that $\overline{a}_{T_{2,v}}(\overline{h})=a^2_{T_v}(f)\ {\rm mod}\ m_{\O}$. 
For each $\sigma\in S^{(2)}_M$, there exist unique finite place $v$ of $M$ lying over 2 and am embedding 
$\sigma_v:M_v\lra \bQ_2$ such that $\sigma_v|_{M}=\sigma$. As in (\ref{ext}) let us observe 
the following exact sequence of ${\rm GL}_2(\F_v)$-modules 
\begin{equation}\label{ext}
0\lra {}^{\sigma_v}({\rm St}_2(\F))^{(2)}\lra {}^{\sigma_v}{\rm Sym}^2{\rm St}_2(\F)\lra {}^{\sigma_v}{\rm det}\F\lra 0.
\end{equation}
Since this is non-split as a $U_{1,2}$-module, there exists a Hecke eigen form 
$\overline{h}_2\in S_{\tau,\overline{\psi}^{{\rm triv}}}((U^{(2)}\times U_{1,2})\cap H(\A^\infty_M),\F)$ 
for $\tau=\ds\bigotimes_{\sigma\in S^{(2)}_M}{\rm Sym}^{1}{\rm St}^{(2)}_2(\F)$ whose Hecke system for 
$\mathbb{T}^{S,H}_{\F}$ is same 
as one of $\overline{h}$. 
Notice that the Frobenius twist induces a permutation on ${\rm Hom}(\F_v,\F)$ and the central character is 
always trivial since the center of $H$ is $Z_H=\mu_2$ the group scheme of order 2.  
Then it is easy to see that there exists a subset 
$X\subset S^{(1)}_M\coprod S^{(2)}_M={\rm Hom}(M,\bQ_2)$ with $|X|=d$ such that 
$\tau\simeq \bigotimes_{\sigma\in X}{\rm Sym}^{1}{\rm St}_2(\F)$. 
Let $X^c$ be the complement of $X$ in ${\rm Hom}(M,\bQ_2)$. 
Then there exists an element $\alpha\in (S_2)^d\rtimes S_d$ such that 
$$\alpha((2\cdot\textbf{1}_d,3\cdot\textbf{1}_d)_{S^{(1)}_M\times S^{(2)}_M})=
(2\cdot\textbf{1}_d,3\cdot\textbf{1}_d)_{X^c\times X}.$$
Hence we may assume that $\overline{h}_2\in 
S_{(2\cdot\textbf{1}_d,3\cdot\textbf{1}_d),\overline{\psi}^{{\rm triv}}}((U^{(2)}\times U_{1,2})\cap H(\A^\infty_M),\F)$. 
Then, by Lemma 6.11 of \cite{DS}, there exist a Hecke eigen form  
$$h_2\in 
S_{(2\cdot\textbf{1}_d,3\cdot\textbf{1}_d),\overline{\psi}^{{\rm triv}}}((U^{(2)}\times U_{1,2})\cap H(\A^\infty_M),\O)$$
whose reduction of the Hecke eigen system coincides with one of $\overline{h}_2$. 
Though we can not conclude  that $\overline{h}_2$ coincides with the reduction of $h_2$ due to 
the lack of the multiplicity one, it is enough to observe only Hecke eigensystems in our purpose. 
Let $\pi_{h_2}$ be the cuspidal automorphic representation associated to $h_2$. Then there exists a lift $\pi_{g}$ of 
$\pi_{h_2}$ to $D^\times(\A_M)$ such that the character of $M^\times\bs \A^\times_M$ corresponding to 
the central character of $\pi_g$ via (\ref{psi}) and a Hecke eigen form $g$ generating $\pi_g$ of weight 
$((2\cdot\textbf{1}_d,3\cdot\textbf{1}_d),w')\in \Z^m\times \Big(\ds\frac{1}{2}\Z\Big)^m$ for some 
$w'\in \Big(\ds\frac{1}{2}\Z\Big)^m$. Notice that for each finite place $v\not\in S\cup\{v|2\}$ 
(see (\ref{fha2} for the set $S$)) the Hecke eigenvalue $a_{T_{2,v}}(g)$ of $T_{2,v}$ for $g|_{H(\A^\infty_M)}$  
belongs to $\O$, but $a_{T_{v}}(g)\not\in \O$ in general and we have $a_{T_{2,v}}(g)\equiv  a^2_{T_{v}}(f)\ {\rm mod}\ m_{\O}$. 
However, it is easy to see that $a_{T_{v}}(g)a_{T_{{}^\iota v}}(g)\in \O$ and its reduction modulo $m_{\O}$ can be recovered 
from the square root of 
$$(a_{T_{v}}(g)a_{T_{{}^\iota v}}(g))^2\equiv a^2_{T_{v}}(f)a^2_{T_{{}^\iota v}}(f)\ {\rm mod}\ m_{\O}$$ which is 
uniquely determined since the characteristic of $\F$ is 2.  
Notice that the weight of $g$ is not parallel weight 2. By Theorem 6.10 of \cite{DLP}, there exists a 
Hilbert modular Hecke eigen cusp form on ${\rm GL}_2(\A_M)$ of weight $((2\cdot\textbf{1}_d,3\cdot\textbf{1}_d),w')$. 
Hence the claim follows from matching with Frobenius eigenvalues and the Brauer-Nesbitt's theorem. 
\end{proof}

\subsection{mod 2 automorphy}\label{auto}
\subsubsection{Symplectic automorphy}
Let $F$ be a totally real field. 
Let $\br:G_F\lra {\rm GSp}_4(\F_2)$ be an irreducible  
mod 2 Galois representation. 
\begin{thm}\label{mt-a5}Suppose that ${\rm Im}(\br)$ is isomorphic to $A_5$ and the degree of $F/\Q$ is even.   
Assume further that for each complex conjugation $c$ of $G_K$, $\br(c)$ is of type $(2,2)$. 
Then there exists a Hilbert-Siegel Hecke eigen cusp form $h$ on ${\rm GSp}_4(\A_F)$ of parallel weight 3 such that $\br\simeq \br_{h,2}$. 
\end{thm}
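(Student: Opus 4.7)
The plan is to realize $\br$ as the symmetric cube of a two-dimensional totally odd mod $2$ representation which is modular by Sasaki's work, and then to propagate modularity through a weight-raising congruence, Kim's symmetric cube functoriality, and the descent from $\gl_4$ to $\GSp_4$.

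First I would apply Proposition \ref{a5Galois} to produce a totally odd continuous representation $\ot:G_F\lra \gl_2(\F_4)$ with image ${\rm SL}_2(\F_4)\simeq A_5$ such that $\br\simeq {\rm Sym}^3(\ot)$ over $\F_4$ and every complex conjugation is sent to the matrix $s$. Sasaki's modularity theorem for totally odd two-dimensional Artin representations with projectively $A_5$-image then yields a Hilbert modular Hecke eigen cusp form $f$ on $\gl_2(\A_F)$ of parallel weight one with $\br_{f,2}\simeq \ot$ for a suitable choice of lattice.

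The key intermediate step is to raise the weight from one to two. Because $[F:\Q]$ is even, there is a quaternion algebra $D/F$ ramified precisely at all the archimedean places, and Jacquet-Langlands transfers $f$ to an algebraic quaternionic eigenform on $D^\times(\A_F)$ with the same mod $2$ Hecke eigensystem. Multiplying by a suitable power of the Hasse invariant and exploiting the exactness and freeness of $S_{\tau,\psi}(U,\O)$ for sufficiently small $U$ exactly as in the proof of Proposition \ref{cong}, I would then produce a Hilbert modular Hecke eigen cusp form $g$ on $\gl_2(\A_F)$ of parallel weight two whose associated mod $2$ Galois representation is isomorphic to $\ot$ up to a finite-order character twist that can be absorbed at the end. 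Kim's symmetric cube functorial lift applied to $\pi_g$ then produces a cuspidal automorphic representation $\Pi={\rm Sym}^3(\pi_g)$ of $\gl_4(\A_F)$; cuspidality holds because the projective image of $\rho_{g,\iota_2}$ is the non-solvable group $A_5$, so $\pi_g$ is neither dihedral, tetrahedral, nor octahedral.

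Since ${\rm Sym}^3{\rm St}_2$ carries an invariant symplectic form, $\Pi$ is essentially self-dual of symplectic type, and Arthur's classification for $\GSp_4$ (as used in Theorem \ref{gal}, cf. \cite{Mok}) descends $\Pi$ to a cuspidal automorphic representation $\pi$ of $\GSp_4(\A_F)$. A direct computation with the archimedean $L$-parameters shows that ${\rm Sym}^3$ applied to the parameter of a weight-two Hilbert discrete series equals $\phi_{(w;2,1)}$ up to a central twist, so at every infinite place $\pi_v$ is the holomorphic discrete series of minimal $K$-type $(3,3)$, corresponding to parallel weight three Hilbert-Siegel. Invoking \cite{GeeT} as in Remark \ref{switch} produces a holomorphic Hilbert-Siegel Hecke eigen cusp form $h$ on $\mathcal{H}_2^d$ realizing $\pi$, and matching Frobenius traces at good primes via Chebotarev density and Theorem \ref{gal} yields $\rho_{h,\iota_2}\simeq {\rm Sym}^3(\rho_{g,\iota_2})$ up to twist, hence $\br_{h,2}\simeq {\rm Sym}^3(\ot)\simeq\br$. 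The hard part will be the descent from $\gl_4(\A_F)$ to $\GSp_4(\A_F)$ as a cuspidal representation with the correct holomorphic archimedean type: one must check that $\pi$ is neither CAP nor endoscopic so that Theorem \ref{gal} attaches the expected Galois representation, and one must invoke \cite{GeeT} (conditional per Remark \ref{GeeT}) to extract a holomorphic form. A secondary technical difficulty is the weight-raising step, which crucially uses the evenness of $[F:\Q]$ through Jacquet-Langlands, mirroring the issue flagged after Theorem \ref{mt-automorphy}.
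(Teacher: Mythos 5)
Your proposal follows the same route as the paper: Proposition \ref{a5Galois} to write $\br\simeq{\rm Sym}^3\ot$, Sasaki's modularity for the weight-one lift, weight-raising via Proposition \ref{cong} (using the definite quaternion algebra, where the evenness of $[F:\Q]$ enters), Kim--Shahidi symmetric cube to $\gl_4$, descent to $\GSp_4$, and \cite{GeeT} to switch to the holomorphic archimedean member. The one small imprecision is the ordering in the weight-raising step (one must first multiply by a power of the Hasse invariant before applying Jacquet--Langlands, since a parallel weight-one form is not discrete series at infinity), but since you invoke Proposition \ref{cong} for this step the substance is correct.
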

\begin{proof}
By Proposition \ref{a5Galois}, there exists 
a totally odd irreducible representation $\ot:G_F\lra {\rm SL}_2(\F_4)$ such that 
$\br\simeq {\rm Sym}^3\ot$. By Theorem 2 of \cite{Sasaki}, 
$\ot$ is modular. Further, applying Proposition \ref{cong}, there exists a Hilbert modular cusp form $f$ of 
${\rm GL}_2(\A_F)$ of parallel weight 2 with the trivial central character such that $\br_{f,2}\simeq \ot$.  
Let $\pi$ be the cuspidal automorphic representation of ${\rm GL}_2(\A_F)$ corresponding to $f$. 
Notice that $\pi$ is not of dihedral since ${\rm Im}\br_{f,2}\simeq A_5$. 
By using Kim-Shahidi cubic lift \cite{KS}, there exists a globally generic cuspidal automorphic representation ${\rm Sym}^3\pi$ 
of ${\rm GL}_4(\A_F)$. Since $\pi$ has the trivial central character, we can lift it back to 
a globally generic cuspidal automorphic representation $\Pi'$ of ${\rm GSp}_4(\A_F)$. 
Finally, we apply \cite{GeeT} to switch infinite types of $\Pi'$. 
Then we have a cuspidal automorphic representation $\Pi$ of ${\rm GSp}_4(\A_F)$ such that 
for each infinite place $v$ of $F$, $\Pi_v$ is holomorphic discrete series and its Langlands parameter is given by 
$\phi_{(w;2,1)}$ for some integer $w$ which can be easily deduce from ${\rm Sym}^3(\phi(\pi_v))$ where 
$\phi(\pi_v)$ is the local Langlands parameter of $\pi_v$ given by 
$\phi(\pi_v)(z)=|z|^{-w'}\diag(\Big(\ds\frac{z}{\overline{z}}\Big)^\frac{1}{2},
\Big(\frac{z}{\overline{z}}\Big)^{-\frac{1}{2}})$ for $z\in \C$ and for some integer $w'$. 
\end{proof}

\begin{thm}\label{mt-F20}Suppose that ${\rm Im}(\br)$ is isomorphic to $F_{20}$.  
Assume further that for each complex conjugation $c$ of $G_F$, $\br(c)$ is of type $(2,2)$.  
Then there exists a Hilbert-Siegel cusp form $h$ on ${\rm GSp}_4(\A_F)$ of parallel weight 3 such that $\br\simeq \br_{h,2}$ 
as a representation to ${\rm GL}_4(\bF_2)$. 
\end{thm}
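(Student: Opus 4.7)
The strategy parallels the $A_5$ case of Theorem~\ref{mt-a5}, but since $F_{20}\not\subset A_5$ there is no $\mathrm{Sym}^3$ structure available on $G_F$; instead I pass to a totally real quadratic subextension and combine Proposition~\ref{cong} with automorphic induction. Because each complex conjugation has image of type $(2,2)$ it lies in $D_{10}=F_{20}\cap A_5$, so the sign character $\mathrm{sgn}\colon F_{20}\to\{\pm 1\}$ cuts out a totally real quadratic extension $M/F$ with $\br(G_M)\simeq D_{10}$. The normal subgroup $C_5\subset F_{20}$ pulls back to a tower $F\subset M\subset K'$ with $\mathrm{Gal}(K'/F)\simeq C_4$, and because $\br(c)\in D_{10}\setminus C_5$ for every complex conjugation $c$, the field $K'$ is CM over the totally real $M$. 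Hence $\br\simeq\mathrm{Ind}_{G_{K'}}^{G_F}\chi$ for an order-$5$ character $\chi\colon G_{K'}\to\mu_5\subset\bF_2^\times$, and $\sigma_1:=\mathrm{Ind}_{G_{K'}}^{G_M}\chi$ is a two-dimensional, irreducible, dihedral, totally odd mod-$2$ Galois representation of $G_M$ with $\sigma_1\not\simeq{}^\iota\sigma_1$ (the two $C_4$-orbits of nontrivial order-$5$ characters of $C_5$ are distinct) and $\br\simeq\mathrm{Ind}_{G_M}^{G_F}\sigma_1$.

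Lifting $\chi$ by class field theory to a finite-order Hecke character $\widetilde\chi$ of $K'$, classical CM theory produces a parallel weight-one Hilbert modular Hecke eigen cusp form $f_1$ on $\mathrm{GL}_2(\A_M)$ with $\br_{f_1,2}\simeq\sigma_1$. Since $\br_{f_1,2}$ is irreducible, Proposition~\ref{cong} yields a Hilbert modular Hecke eigen cusp form $g$ on $\mathrm{GL}_2(\A_M)$ of paritious non-parallel weight $((2\cdot\textbf{1}_d,4\cdot\textbf{1}_d),\underline{w}')$ satisfying $\br_{g,2}\simeq\sigma_1\otimes\psi$ for some character $\psi$ of $G_M$. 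The automorphic induction $\Pi:=\mathrm{AI}_{M/F}(\pi_g)$ from $\mathrm{GL}_2(\A_M)$ to $\mathrm{GL}_4(\A_F)$ is cuspidal (as $\sigma_1\not\simeq{}^\iota\sigma_1$), and $\rho_{\Pi,2}\simeq\mathrm{Ind}_{G_M}^{G_F}\rho_{g,2}$ agrees with $\br\otimes\widetilde\psi$ for some character $\widetilde\psi$ of $G_F$. At each real place of $F$, the two local weights $2$ and $4$ of $g$ at the two real places of $M$ above it induce the $\mathrm{GL}_4(\R)$-Langlands parameter with exponents $\{\pm 3/2,\pm 1/2\}$, which matches $\phi_{(w;2,1)}$ and hence the parallel weight-three Hilbert-Siegel $L$-packet.

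Because $\det\sigma_1=\epsilon_{K'/M}$ is $\mathrm{Gal}(M/F)$-invariant, $\rho_\Pi$ is self-dual, and a central-character analysis exploiting the paritious weight and the CM structure shows that it is of symplectic type; Arthur's classification (cf.~\cite{Mok}) then supplies a globally generic cuspidal automorphic representation $\Pi'$ of $\mathrm{GSp}_4(\A_F)$ transferring to $\Pi$. Applying \cite{GeeT} to switch the archimedean $L$-packet of $\Pi'$ from the generic to the holomorphic discrete series, and absorbing the twist by $\widetilde\psi$ (a finite-order character of $G_F$) into the resulting form, produces a holomorphic Hilbert-Siegel Hecke eigen cusp form $h$ of parallel weight three with $\br_{h,2}\simeq\br$. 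The main obstacle is the descent $\mathrm{GL}_4\to\mathrm{GSp}_4$: the Artin lift of $\br$ to characteristic zero is orthogonal (the unique four-dimensional irreducible representation of $F_{20}$ has Frobenius-Schur indicator $+1$), so the proof must replace this Artin lift by the higher-weight representation $\mathrm{Ind}_{G_M}^{G_F}\rho_g$ whose symplectic structure in characteristic zero degenerates after reduction modulo $2$, where $\mathrm{Sp}_4(\F_2)=\mathrm{SO}_4(\F_2)$.
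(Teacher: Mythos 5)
Your decomposition $F\subset M\subset K'$, the identification $\br\simeq\mathrm{Ind}_{G_M}^{G_F}\sigma_1$ with $\sigma_1=\mathrm{Ind}_{G_{K'}}^{G_M}\chi$ dihedral of type $D_{10}$, the weight-one CM modularity of $\sigma_1$, and the use of Proposition~\ref{cong} to move to the paritious non-parallel weight $(2\cdot\textbf{1}_d,4\cdot\textbf{1}_d)$ all match the paper's argument (the paper calls your $K'$ ``$L$''; you are also more scrupulous than the paper in tracking the twist $\psi$ from Proposition~\ref{cong}, which the paper silently suppresses). Where you diverge genuinely is in the last lifting step. The paper invokes Roberts's theta-lift result (Theorem~8.6 of \cite{R}) to pass \emph{directly} from the cuspidal representation $\pi$ of $\mathrm{GL}_2(\A_M)$ to a cuspidal automorphic representation of $\mathrm{GSp}_4(\A_F)$ whose archimedean components are already holomorphic discrete series; it then simply reads off the parameter $\phi_{(w;2,1)}$ from the archimedean computation in \cite{Dimi}, so no archimedean-packet switching is needed. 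You instead form the automorphic induction $\Pi=\mathrm{AI}_{M/F}(\pi_g)$ on $\mathrm{GL}_4(\A_F)$, argue it is essentially self-dual of symplectic type, descend to $\mathrm{GSp}_4(\A_F)$ by Arthur's classification, and only then apply \cite{GeeT} to switch to the holomorphic member of the archimedean packet. Your route works but is heavier machinery: the descent from $\mathrm{GL}_4$ to $\mathrm{GSp}_4$ and the packet switch both rest on the (conditional) results of \cite{GeeT}, whereas Roberts's theta lift is unconditional and yields holomorphy in one stroke. Two small inaccuracies: the parenthetical citation of \cite{Mok} for Arthur's classification should be \cite{GeeT}; and your appeal to ``the CM structure'' in the central-character/symplectic-type argument is misplaced, since the higher-weight form $g$ produced by Proposition~\ref{cong} need not be CM --- what you actually use is clause (2) of that proposition, namely that $g$ has Galois-invariant (indeed trivial) finite-order central character, which makes $\det\rho_{g,2}$ $\mathrm{Gal}(M/F)$-invariant and hence $\mathrm{Ind}_{G_M}^{G_F}\rho_{g,2}$ symplectic. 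Your closing observation that the Artin lift of $\br$ is orthogonal (Frobenius--Schur indicator $+1$ for the unique $4$-dimensional irreducible of $F_{20}$), so the modulo-$2$ coincidence $\mathrm{Sp}_4(\F_2)=\mathrm{SO}_4(\F_2)$ together with the higher-weight congruent form is what makes the symplectic lift possible, is correct and is a useful remark the paper leaves implicit.
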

\begin{proof}By assumption, there exists an extension $L\supset M\supset F$ such that 
$M/F$ is a totally quadratic extension, $L/M$ is a totally imaginary quadratic extension, and 
$L/F$ is a Galois extension with the Galois group $C_4$. Then we have 
$$\br\simeq {\rm Ind}^{G_F}_{G_M}\ot,\ \ot={\rm Ind}^{G_M}_{G_L}\overline{\chi}$$
for some character $\overline{\chi}:G_L\lra \bF^\times_{2}$ such that ${\rm Im}\ot\simeq D_{10}$ and 
${\rm Im}\overline{\chi}\simeq C_{5}$. 
By assumption, $\ot$ is totally odd and it comes from a Hilbert modular Hecke eigen cusp form  on 
${\rm GL}_2(\A_M)$ of parallel weight 1. 
Further, applying Proposition \ref{cong}, there exists a Hilbert modular cusp form $f$ of 
${\rm GL}_2(\A_M)$ of parallel weight $(\underline{k},\underline{w}')$ with 
 $$\underline{k}=(2\cdot \textbf{1}_d,4\cdot \textbf{1}_d)\in \Z^{S^{(1)}_M}\times \Z^{S^{(2)}_M}$$ such that 
$f$ has the trivial central character and $\br_{f,2}\simeq \ot$.  
Let $\pi$ be the cuspidal automorphic representation of ${\rm GL}_2(\A_M)$ corresponding to $f$. 
Applying Theorem 8.6 in p.251 of \cite{R}, we have 
a cuspidal automorphic representation $\Pi$ of ${\rm GSp}_4(\A_F)$ such that 
for each infinite place $v$, $\Pi_v$ is holomorphic discrete series. 
It follows from Section 2.1 of 
\cite{Dimi} that for each infinite place $v$ of $M$, the local Langlands parameter is given by 
$$\phi_{\pi_v}(z)=
|z|^{-w'_v}\left\{
\begin{array}{cc}
\diag(\Big(\ds\frac{z}{\overline{z}}\Big)^\frac{1}{2},\Big(\frac{z}{\overline{z}}\Big)^{-\frac{1}{2}}) 
&  (v\in S^{(1)}_M)\\
\diag(\Big(\ds\frac{z}{\overline{z}}\Big)^\frac{3}{2},\Big(\frac{z}{\overline{z}}\Big)^{-\frac{3}{2}}) 
&  (v\in S^{(2)}_M)
\end{array}\right.
$$
It follows this that the local Langlands parameter of $\Pi_v$ is $\phi_{(w;2,1)}$ for some $w\in \Z$. 
Hence we have the claim.  
\end{proof}
\subsubsection{Orthogonal automorphy}Recall that ${\rm GSp}_4(\F_2)={\rm Sp}_4(\F_2)={\rm SO}_4(\F_2)$. 
We discuss mod 2 automorphy of $\br$ when we view it with a representation to ${\rm SO}_4(\F_2)$. 
In the Galois representations associated to regular  algebraic, essentially self-dual, 
cuspidal automorphic representation of ${\rm GL}_n(\A_F)$ for a totally real field $F$, we refer Section 1 of \cite{BGHT}.  
\begin{thm}\label{mt-s5}Suppose that ${\rm Im}(\br)$ is isomorphic to $S_5$.  
Assume further that for each complex conjugation $c$ of $G_F$, $\br(c)$ is of type $(2,2)$. 
Then there exists a regular orthogonal cuspidal automorphic representation $\Pi$ of ${\rm GL}_4(\A_F)$ of weight zero  such that  
$\br\simeq \br_{\Pi,2}$ as a representation to ${\rm GL}_4(\bF_2)$. 
\end{thm}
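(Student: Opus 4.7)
The plan is to reduce to two-dimensional mod $2$ Galois representations via Proposition \ref{s5Galois}, apply Shu Sasaki's modularity theorem to the associated $\ot$, and then extract $\Pi$ by automorphic induction using the non-paritious Hilbert modular framework of \cite{DLP} encoded in Proposition \ref{non-pari}.

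First, I would apply Proposition \ref{s5Galois} to $\br$. Its hypotheses are exactly our assumptions, and it produces a totally real quadratic extension $M/F$ with ${\rm Gal}(M/F)=\langle\iota\rangle$ and a totally odd irreducible representation $\ot:G_M\to{\rm GL}_2(\F_4)$ of image ${\rm SL}_2(\F_4)\simeq A_5$ satisfying $\br\simeq \ot\otimes{}^{\iota}\ot$ in ${\rm GL}_4(\F_4)$, with each complex conjugation sent to $s$. Theorem 2 of \cite{Sasaki} then supplies a Hilbert modular Hecke eigen cusp form $f$ on ${\rm GL}_2(\A_M)$ of parallel weight one with $\br_{f,2}\simeq\ot$.

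Second, I would apply Proposition \ref{non-pari} to $f$. This produces a Hilbert modular Hecke eigen cusp form $g$ on ${\rm GL}_2(\A_M)$ of non-paritious weight $(\uk,\uw)$ with $\uk=(2\cdot\textbf{1}_d, 3\cdot\textbf{1}_d)$, together with (via Theorem 5.1 of \cite{DLP}) a regular algebraic essentially self-dual cuspidal automorphic representation $\Pi$ of ${\rm GL}_4(\A_F)$ obtained as the automorphic induction of $\pi_g$, whose attached mod $2$ Galois representation satisfies
\begin{equation*}
\br_{\Pi,2}\otimes\psi\ \simeq\ \br_{f,2}\otimes{}^{\iota}\br_{f,2}\ \simeq\ \ot\otimes{}^{\iota}\ot\ \simeq\ \br
\end{equation*}
for some continuous character $\psi:G_F\to\bF^{\times}_2$. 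I would then absorb the twist $\psi$ by replacing $\Pi$ with its twist by a Hecke character of $F$ whose Galois avatar reduces to $\psi$; this preserves cuspidality, regularity, and essential self-duality.

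Third, I would verify that $\Pi$ is orthogonal of weight zero. Orthogonality follows because $M/F$ is quadratic and $\pi_g$ is conjugate self-dual (after the twist above), so that the automorphic induction is orthogonal rather than symplectic; equivalently, ${\rm Im}(\br)$ lies in ${\rm SO}(4)(\F_2)={\rm GSp}_4(\F_2)$ on the Galois side. The weight zero property is ensured by the specific non-paritious weight $\uk=(2\cdot\textbf{1}_d, 3\cdot\textbf{1}_d)$: its role in \cite{DLP} (compare Theorem 6.10 there) is precisely to align the Hodge--Tate weights at each infinite place so that the resulting four-dimensional Galois representation corresponds to a weight zero regular algebraic cuspidal automorphic representation of ${\rm GL}_4(\A_F)$.

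The principal obstacle is the interplay between the non-paritious weight on $M$ and the Hodge--Tate weights of $\rho_{\Pi,\iota_2}$ needed to pin down $\Pi$ as being of weight zero. Careful tracking of the infinity type of $\pi_g$ through the automorphic induction of \cite{DLP}, together with absorbing the character $\psi$ by a global Hecke character with compatible ramification (cf.\ Remark \ref{LEVEL}), is the technical heart of the argument.
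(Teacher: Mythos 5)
Your first two steps — reducing via Proposition \ref{s5Galois}, applying Sasaki's modularity theorem, and producing a non-paritious Hilbert modular form $g$ on ${\rm GL}_2(\A_M)$ of weight $(2\cdot\textbf{1}_d,3\cdot\textbf{1}_d)$ via Proposition \ref{non-pari} — match the paper's argument. The final step, however, uses the wrong functorial transfer, and this is a genuine gap.

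You take $\Pi$ to be the automorphic induction of $\pi_g$ from ${\rm GL}_2(\A_M)$ to ${\rm GL}_4(\A_F)$. On the Galois side, automorphic induction gives (up to twist) ${\rm Ind}^{G_F}_{G_M}\ot$, whose Frobenius trace at a place $u$ of $F$ split as $w\cdot{}^\iota w$ in $M$ is ${\rm tr}\,\ot({\rm Frob}_w)+{\rm tr}\,\ot({\rm Frob}_{{}^\iota w})$. But Proposition \ref{s5Galois} gives $\br\simeq\ot\otimes{}^\iota\ot$, the \emph{twisted tensor product}, whose trace at such a place is the \emph{product} ${\rm tr}\,\ot({\rm Frob}_w)\cdot{\rm tr}\,\ot({\rm Frob}_{{}^\iota w})$. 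These do not agree, so your displayed chain $\br_{\Pi,2}\otimes\psi\simeq\ot\otimes{}^\iota\ot$ fails. This also invalidates your parity/duality claim: as Remark \ref{symp-ortho}(1) points out, ${\rm Ind}^{G_F}_{G_M}\ot$ lands in ${\rm GSp}_4(\F_4)$ and is the symplectic construction (the one associated with abelian surfaces with real multiplication), whereas it is $\ot\otimes{}^\iota\ot$ that is the orthogonal one needed here. Finally, the Hodge--Tate weights of the automorphic induction of a weight-$(2\cdot\textbf{1}_d,3\cdot\textbf{1}_d)$ form are, at each archimedean place of $F$, of the form $\{a,a+1\}\cup\{b,b+2\}$, which cannot be made distinct; so the induction is not even regular algebraic, let alone of weight zero.

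The paper instead forms the Rankin--Selberg tensor product $\pi=\pi_f\otimes{}^\iota\pi_f$ (with $f$ the weight-$(2\cdot\textbf{1}_d,3\cdot\textbf{1}_d)$ form). Since the central character of $\pi_f$ is ${\rm Gal}(M/F)$-invariant, $\pi$ is a cuspidal automorphic representation of ${\rm GO}(2,2)(\A_F)$, and Ramakrishnan's Theorem~M (p.54 of \cite{Ra}) transfers it to a cuspidal automorphic representation $\Pi$ of ${\rm GL}_4(\A_F)$. This $\Pi$ does satisfy $\br_{\Pi,2}\simeq\ot\otimes{}^\iota\ot\simeq\br$ up to twist, is orthogonal as required, and the tensor of the archimedean parameters $\phi_{\pi_{f,v}}$ for $v\in S^{(1)}_M$ (weight $2$) and $v\in S^{(2)}_M$ (weight $3$) yields the regular parameter $\phi_{(w;2,1)}$, giving weight zero. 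The role of Proposition \ref{non-pari} in the proof of Theorem \ref{mt-s5} is only to furnish the form $g$; the automorphic induction appearing in item (3) of that proposition is not what the proof of Theorem \ref{mt-s5} invokes — you should replace it with Ramakrishnan's ${\rm GO}(2,2)\to{\rm GL}_4$ transfer of $\pi_f\otimes{}^\iota\pi_f$.
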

\begin{proof}
By Proposition \ref{s5Galois}, there exist a cubic character $\overline{\psi}:G_F\lra \F^\times_4$, a totally real quadratic extension $M/F$ 
with the Galois group ${\rm Gal}(M/F)=\langle \iota \rangle$ and 
a totally odd irreducible representation $\ot:G_M\lra {\rm SL}_2(\F_4)$ such that 
$\br\simeq ({\rm Ind}^{G_F}_{G_M}\ot)\otimes\psi$. By Theorem 2 of \cite{Sasaki}, 
$\ot$ is modular. Further, applying Proposition \ref{cong}, there exists a Hilbert modular cusp form $f$ of 
${\rm GL}_2(\A_F)$ of weight $(\underline{k},\underline{w}')$ with 
 $$\underline{k}=(2\cdot \textbf{1}_d,3\cdot \textbf{1}_d)\in \Z^{S^{(1)}_M}\times \Z^{S^{(2)}_M}$$ such that 
the central character of $f$ is a power of $|\det|_{\A^\times_M}$  and $\br_{\pi,2}\simeq 
\ot\otimes {}^\iota\ot$ where $\pi=\pi_f\otimes {}^\iota \pi_f$ is 
the tensor product of $\pi_f$ and its twists ${}^\iota \pi_f$.   
Notice that the central character of $\pi_f$ is ${\rm Gal}(M/F)$-invariant. 
Hence, it can be viewed as a cuspidal automorphic representation of ${\rm GO}(2,2)(\A_F)$. 
Further, by Theorem M, p.54 of \cite{Ra}, it can be transfered to a cuspidal 
automorphic representation $\Pi$ of ${\rm GL}_4(\A_F)$.  
For each infinite place $v$ of $M$, the local Langlands parameter is given by 
$$\phi_{\pi_v}(z)=
|z|^{-w'_v}\left\{
\begin{array}{cc}
\diag(\Big(\ds\frac{z}{\overline{z}}\Big)^\frac{1}{2},\Big(\frac{z}{\overline{z}}\Big)^{-\frac{1}{2}}) 
&  (v\in S^{(1)}_M)\\
\diag(\Big(\ds\frac{z}{\overline{z}}\Big),\Big(\frac{z}{\overline{z}}\Big)^{-1}) 
&  (v\in S^{(2)}_M)
\end{array}\right.
$$
It follows from this that $\Pi$ is of weight zero. Hence for such a $v$, the local Langlands parameter of $\Pi_v$ 
is given by $\phi_{(w;2,1)}:W_\R\lra {\rm GL}_4(\C)$ via ${\rm GSp}_4(\C)\subset {\rm GL}_4(\C)$. 
\end{proof}

\begin{rmk}\label{level}
In the course of the proofs of above theorems, we apply Sasaki's modularity lifting theorem 
which has achieved by raising level. Therefore, we can not specify the levels of Hilbert modular forms 
appearing there. In conclusion, we can not specify the levels of our Hilbert-Siegel modular forms as well. 
However, there is some hope and a substantial expectation for further development of the level lowering method. 
In fact, Conjecture 4.7 and Conjecture 4.9 of \cite{BDJ} show that we can take 
Hilbert modular forms in question to have the correct level regarding Artin conductor of $\br$. 
\end{rmk}

\section{Local mod $2$ computations}
Let us regard $X_\psi$ as a smooth projective scheme over ${\rm Spec}\hspace{0.5mm}\O_K[\frac{1}{5},\psi,\frac{1}{\psi^5-1}]$. 
The mirror variety $W_\psi$ of Dwork quintic $X_\psi$ is defined over 
${\rm Spec}\hspace{0.5mm}\O_K[\frac{1}{5},\psi,\frac{1}{\psi^5-1}]$ by the toric construction in Section \ref{MV} 
and Lemma \ref{triang} in Section \ref{Trc}. 
In this section we devote to calculating the number of rational points on $W_\psi$ modulo $2$ over a finite field.  

Let $p$ be a prime number with $p \ne 2, 5$, $q$ be a positive power of $p$, 
and $\psi \in \mathbb F_q$ such that $\psi^5 \ne 1$. 
The zeta function of $W_\psi$ over $\F_q$ is given by 
$$
    Z_{W_\psi/\F_q}(t) = \frac{P_{\psi, q}(t)}{(1-t)Q_{\psi, q}(t)Q_{\psi, q}(qt)(1-q^3t)}. 
$$
with the numerator 
$$
     P_{\psi, q}(t) = 1 - a_{\psi, q} t + b_{\psi, q} t^2 - q^3a_{\psi, q}t^3 + q^6t^4 \in \mathbb Z[t]
$$
and the polynomial $Q_{\psi, q}(t) = 1 + \cdots \pm q^{101}t^{101} \in \mathbb Z[t]$ of degree $101$ 
such that the absolute values of all reciprocal roots of $Q_{\psi, q}(t) = 0$ are $q$ 
by Poincar\'e duality and the strong Lefschetz formula. 
Recall the polynomial  $f_\psi(x) = 4x^5-5\psi x^4+1$ with the discriminant $2^85^5(1-\psi^5)$. 
\begin{prop}\label{c0} Let $n(f_\psi, q)$ be 
the cardinal of solutions of the equation $f_\psi(x) = 0$ in $\F_q$. Then the congruence below holds:
$$
      a_{\psi, q} = 1 - (Q_{\psi, q}'(0) + qQ_{\psi, q}'(0)) 
      + q^3 - \sharp\, W_\psi(\mathbb F_q)\, \equiv\, n(f_\psi, q) + 1\, (\mathrm{mod}\, 2). 
$$
\end{prop}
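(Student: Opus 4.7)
The first equality is obtained directly from the given factorisation of $Z_{W_\psi/\F_q}(t)$. Taking $t\frac{d}{dt}\log$ of both sides and comparing coefficients in
$$t\frac{d}{dt}\log Z_{W_\psi/\F_q}(t)\ =\ \sum_{n\ge 1}\#W_\psi(\F_{q^n})\,t^n,$$
the coefficient of $t^1$ on the denominator side is $1+102q+102q^2+q^3$, while on the numerator side it is $-a_{\psi,q}$. This gives $\#W_\psi(\F_q)=1+102(q+q^2)+q^3-a_{\psi,q}$, which is the first equality. Since $p\ne 2$ the residue cardinal $q$ is odd, so $102(q+q^2)\equiv 0$ and $1+q^3\equiv 0\pmod 2$, hence $a_{\psi,q}\equiv\#W_\psi(\F_q)\pmod 2$; the remaining task is therefore the parity identity $\#W_\psi(\F_q)\equiv n(f_\psi,q)+1\pmod 2$.

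My plan is to reduce the count to the affine open $U_\psi$. The crepant resolution $W_\psi\to Y_\psi$ is an isomorphism off the finitely many singular points of $Y_\psi$, and over each singular point the exceptional fibre is a union of toric strata (products of $\mathbb{G}_m$'s, $\mathbb{P}^1$'s and rational surfaces coming from the fan refinement described in Section \ref{MV}) whose $\F_q$-point counts are polynomials in $q$ whose values at every odd $q$ are even. Likewise the toric boundary $Y_\psi\setminus U_\psi$ inherits the natural stratification of $\mathbb P_\Delta$ into tori of positive dimension, each of which contributes an even number of $\F_q$-points. A careful bookkeeping should show that all these contributions vanish modulo $2$ except for one distinguished zero-dimensional torus-fixed stratum of $\mathbb P_\Delta$ which meets $W_\psi$ in a single $\F_q$-rational point, yielding
$$\#W_\psi(\F_q)\ \equiv\ \#U_\psi(\F_q)+1\pmod 2.$$

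For $\#U_\psi(\F_q)\pmod 2$ I would exploit the $S_4$-symmetry of the defining equation $x_1+x_2+x_3+x_4+(x_1x_2x_3x_4)^{-1}=5\psi$ and partition its $\F_q$-points into $S_4$-orbits. Orbit sizes divide $|S_4|=24$, so only orbits of size $1$ or $3$ survive mod $2$. Size-$1$ orbits are the diagonal solutions $x_1=\cdots=x_4=x$ with $4x+x^{-4}=5\psi$, equivalently $f_\psi(x)=0$; since $f_\psi(0)=1$ every root lies in $\F_q^\times$, so they contribute exactly $n(f_\psi,q)$. Size-$3$ orbits correspond to tuples of type $(a,a,b,b)$ with $a\ne b$ satisfying $2(a+b)+(ab)^{-2}=5\psi$; I expect their parity to cancel against an even contribution coming from the matching boundary and exceptional strata that lie in the closure of the non-generic locus, leaving only $n(f_\psi,q)+1\pmod 2$. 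The main obstacle will be this last piece of bookkeeping: tracking precisely which boundary and exceptional strata are ``seen'' by the $(2,2)$-type orbits under the toric compactification and crepant resolution of Section \ref{MV}, so that, after reduction modulo $2$, all terms except $n(f_\psi,q)$ and the single $+1$ manifestly disappear.
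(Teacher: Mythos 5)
Your opening moves are fine: the derivation of the first equality from the zeta function via $t\,\frac{d}{dt}\log$, and the reduction to showing $\sharp W_\psi(\F_q)\equiv n(f_\psi,q)+1\ (\mathrm{mod}\ 2)$ for odd $q$, both match what the paper needs and are correct. The broad plan (pass from $W_\psi$ to $U_\psi$ using the toric picture, then use a permutation-symmetry argument on $U_\psi$) is also the same shape as the paper's. But the execution of both halves contains genuine errors, not just unfinished bookkeeping.

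For the passage $\sharp W_\psi\equiv\sharp U_\psi+1$: the singular locus $Y_\psi^{\mathrm{sing}}$ is not finite, and the assertion that each boundary stratum or exceptional stratum contributes an even number of $\F_q$-points is false. The boundary strata $Y_{\psi,\tau}$ for $1\le\dim\tau\le3$ have $\sharp Y_{\psi,\tau}(\F_q)=\frac{1}{q}\bigl((q-1)^{\dim\tau}+(-1)^{\dim\tau+1}\bigr)$, which is \emph{odd} for every odd $q$; the $+1$ emerges from summing an odd total of these odd contributions, not from a distinguished zero-dimensional stratum (indeed $Y_{\psi,\tau}=\emptyset$ when $\dim\tau=0$, so $W_\psi$ never meets a torus-fixed point). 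For the exceptional locus, a stratum-by-stratum parity claim is not available; the paper instead exploits the $C_4$-action by cyclic permutation of coordinates on the whole desingularization $W_\psi\to Y_\psi$. One checks that every $C_4$-fixed point of $Y_\psi$ lies in $Y_\psi^{\mathrm{reg}}$, so on $\pi_\psi^{-1}(Y_\psi^{\mathrm{sing}})$ every $C_4$-orbit has size $2$ or $4$, hence the exceptional contribution is even, and one concludes $\sharp W_\psi\equiv\sharp Y_\psi^{\mathrm{reg}}\equiv\sharp Y_\psi-\sharp Y_\psi^{\mathrm{sing}}\ (\mathrm{mod}\ 2)$ with $\sharp Y_\psi^{\mathrm{sing}}(\F_q)$ a multiple of $q$. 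This symmetry idea is the key ingredient your sketch is missing.

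For the passage $\sharp U_\psi\equiv n(f_\psi,q)$: the orbit-parity strategy is sound, but your identification of size-$3$ orbits is wrong. Tuples of type $(a,a,b,b)$ with $a\ne b$ form $S_4$-orbits of size $\binom{4}{2}=6$, which is even; they do not need to be ``cancelled'' against anything. In fact there are no size-$3$ orbits at all: a stabilizer of index $3$ in $S_4$ has order $8$, hence is a Sylow $2$-subgroup $D_4$, which contains a $4$-cycle; any point fixed by a $4$-cycle lies on the full diagonal and so has orbit size $1$. Thus only the diagonal survives mod $2$, giving $\sharp U_\psi(\F_q)\equiv n(f_\psi,q)$ directly, with no auxiliary cancellation against boundary strata (which would in any case be an ill-defined comparison, since you are counting points on the open affine). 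The paper reaches the same conclusion more economically by using the $C_4$-action on $U_\psi$: orbit sizes divide $4$, so only the $C_4$-fixed locus, namely the diagonal $x_1=x_2=x_3=x_4$ giving $f_\psi(x)=0$, contributes mod $2$. Replacing your $S_4$-argument by the correct orbit count (or by $C_4$) repairs this half; the first half requires the equivariance argument on the resolution that your proposal does not supply.
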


Proposition \ref{c0} follows from Lemmas \ref{c3}, \ref{c2} and \ref{c4} below. 

\subsection{Toric Calabi-Yau $3$-folds}\label{MV}
Let us recall a toric construction of $W_\psi$ by Batyrev in \cite{Ba} but we follow the notation 
in Section 6 of \cite{Wa}.  
Let $U_\psi$ be a smooth affine variety defined by the equation below in $\mathbb G_{\mathrm{m}}^4$:
$$
U_\psi:x_1 + x_2 + x_3 + x_4 + \frac{1}{x_1x_2x_3x_4} - 5\psi = 0. 
$$
It follows from $\psi^5-1\neq 0$ that $U_\psi$ is smooth. 
Let $\Delta$ be the convex $4$-dimensional reflexive polyhedron with vertices 
$$
      e_1, e_2, e_3, e_4, -e_1 - e_2 - e_3 - e_4
$$
in $M_{\mathbb R}=\mathbb R^4$., where $e_i$ is the standard basis of $M_{\mathbb R}$. 
Then there exists an $\mathbb R$-linear 
isomorphism $\eta_j : M_{\mathbb R} \rightarrow M_{\mathbb R}\, (j = 1, 2, 3, 4)$ 
which is defined by $\eta_j(e_j) = -e_1 - e_2 - e_3 - e_4$ 
and $\eta_j(e_i) = e_i\, (i \ne j)$. Then $\eta_j(\Delta) = \Delta$. 
Let us define $\mathbb P_\Delta$ 
by the projective toric variety associated to the polyhedron $\Delta$. The projective toric variety $\mathbb P_\Delta$ 
is a singular Fano variety such that 
$$
   \begin{array}{l}
       \mathbb P_\Delta = \cup_{0\leq i \leq 4}U_i\, \, 
       \mbox{where}\, \,  U_i \subset \mathbb A^5\, \mbox{is defined by}\,  y_{i, 1}y_{i, 2}y_{i, 3}y_{i, 4}= y_{i, 0}^5 \\
       \hspace*{15mm} \mbox{\rm with}\, \, \left\{\begin{array}{ll} y_{0, 0} = x_1x_2x_3x_4, y_{0, j} = x_j x_1\cdots x_4 &\mathrm{if}\, i = 0 \\
                                               y_{i, 0}=\frac{1}{x_i}, y_{i, i} = \frac{1}{x_i x_1\cdots x_4}, y_{i, j} = \frac{x_j}{x_i}\, (j \ne 0, i) 
                                               &\mathrm{if}\, i \ne 0. \end{array}\right.
                                      \end{array}
$$
Mote that $\eta_j$'s induce an automorphism $\widetilde{\eta}_j$ of $\mathbb P_\Delta$ such that $\widetilde{\eta}_j(U_k) = U_l$ 
for $(k, l) = (0, j), (j, 0)$ or for $k = l \ne 0, j$. 
$\mathbb P_\Delta$ 
has a stratification $\mathbb P_\Delta = \coprod_{\tau \in \Delta}\mathbb T_{\Delta, \tau}$ where 
$\tau$ runs through all faces of $\Delta$ such that 
$\mathbb T_{\Delta, \tau} \cong \mathbb G_{\mathrm{m}}^{\mathrm{dim}\, \tau}$ since 
$y_{i, 0} = \frac{y_{0, 0}}{y_{0, i}}, y_{i, i} = y_{0, i}, y_{i, j} = \frac{y_{0, j}}{y_{0, i}}\, (j \ne 0, i)$ for $i \ne 0$. 

Let us regard $U_\psi$ as a closed subscheme in $\mathbb T_{\Delta, \Delta} = \mathbb G_\mathrm{m}^4$ with coordinates 
$x_1^{\pm 1}, x_2^{\pm 1}, x_3^{\pm 1}, x_4^{\pm 1}$
and define $Y_\psi$ by the Zariski closure of $U_\psi$ in $\mathbb P_\Delta$. 
The stratification of $\mathbb P_\Delta$ induces 
a stratification 
$$
       Y_\psi = \coprod_{\tau \in \Delta} Y_{\psi, \tau}, \hspace*{3mm} 
       Y_{\psi, \tau} = Y_\psi \cap \mathbb T_{\Delta, \tau} : z_1+\cdots+z_{\mathrm{dim}\tau}+1 = 0\, \, \mbox{in}\, \, 
       T_{\Delta, \tau} \cong \mathbb G_{\mathrm m}^{\mathrm{dim}\tau}
$$
of $Y_\psi$ by smooth subvarieties $Y_{\psi, \tau}$, i.e., $Y_\psi$ is $\Delta$-regular 
\cite[Definition 3.1.1]{Ba}. 
In this situation we obtain a projective smooth mirror symmetry $W_\psi$ of $X_\psi$ 
by an MPCP-desingularization (maximal projective crepant partial desingularization) 
$$
     \pi_\psi : W_\psi \rightarrow Y_\psi
$$
by \cite[Theorem 4.2.2 and Corollary 4.2.3]{Ba} because of $\mathrm{dim}\, W_\psi = 3$. In fact, 
let us take the dual reflexive polyhedron $\Delta^\ast$ which is generated by 
$5e_i^\ast  -e_1^\ast -e_2^\ast-e_3^\ast-e_4^\ast\, (1 \leq i \leq 4)$
$$
      5e_i^\ast  -e_1^\ast -e_2^\ast-e_3^\ast-e_4^\ast\, (1 \leq i \leq 4), -e_1^\ast - e_2^\ast - e_3^\ast - e_4^\ast
$$
in the dual space $N_{\mathbb R}$ of $M_\mathbb R$ \cite[Definition 4.1.1]{Ba}, 
where $\{ e_i^\ast \}$ is the dual basis of $\{e_i\}$. 
We choose a triangulation $\mathcal T^\ast$ of $\Delta^\ast$ such that any codimension $1$ face of $\Delta^\ast$ 
have a triangulation into all basic simplices with vertices in $\Delta^\ast \cap N$. 
Here $N = \langle e_1, \cdots, e_4\rangle \cong \mathbb Z^4$ is the standard lattice of $N_{\mathbb R}$. 
Then 
$\mathcal T^\ast$ is a maximal projective triangulation of $\Delta^\ast$ 
\cite[Definitions 2.2.15, 2.2.18]{Ba} \cite[Chapter III, \S2B]{KKMS}. 
The subdivision $\Sigma(\mathcal T^\ast)$ of the fan $\Sigma(\Delta^\ast)$ associated to 
$\mathcal T^\ast$ and $\Delta^\ast$ \cite[Proposition 2.1.1]{Ba}, respectively, gives an MPCP-desingularization  
$\pi : \mathbb P_{\Sigma(\mathcal T^\ast)} \rightarrow \mathbb P_{\Sigma(\Delta^\ast)}$ and 
$W_\psi$ is the closure of $U_\psi$ in $\mathbb P_{\Sigma(\mathcal T^\ast)}$. 
Moreover the number of rational points over a finite field does not 
depend on the choice of crepant resolutions for projective smooth Calabi-Yau varieties 
\cite[Theorem 2.8]{Ba99}.

\begin{rmk}\label{c6} Batyrev and Kreuzer studied the topological fundamental groups and the Brauer groups for 
toric Calabi-Yau manifolds in \cite{BK}. Fix an embedding $\overline{\mathbb Q} \subset \mathbb C$. 
By their work of integral cohomologies the topological fundamental group $\pi_1^{\mathrm{top}}(W_\psi(\mathbb C))$ 
and the Brauer group $B(W_\psi(\mathbb C))$ of the associated complex manifold $W_\psi(\mathbb C)$ vanish 
by \cite[Corollaries 1.9, 3.9]{BK}. Hence the singular cohomology 
$H^i(W_\psi(\mathbb C), \mathbb Z)$ is torsion-free for any $i$. 
Therefore, the $\ell$-adic etale cohomology $H^i_{\mathrm{et}}(W_{\psi, \overline{\mathbb Q}}, \mathbb Z_\ell)$ 
is torsion-free for any $i$ and any prime number $\ell$ by the comparison theorem between singular and etale torsion cohomologies.
\end{rmk} 

\subsection{Proof of Proposition \ref{c0}}\label{Trc} 

\begin{lem}\label{c3}\mbox{\rm (\cite[\S 6, (11)]{Wa})} 
With the notation as above, we have 
$$
\sharp\, Y_\psi(\mathbb F_q)
= \sharp\, U_\psi(\mathbb F_q) - \frac{(q-1)^4+(-1)^5}{q} + \frac{q^4-1}{q-1}.
$$
In particular, $\sharp\, Y_\psi(\mathbb F_q)\, \equiv\, \sharp\, U_\psi(\mathbb F_q) + 1\, (\mathrm{mod}\, 2)$. 
\end{lem}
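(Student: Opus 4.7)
The plan is to exploit the toric stratification $Y_\psi=\coprod_{\tau\preceq\Delta}Y_{\psi,\tau}$ set up in Section \ref{MV}, indexed by the faces $\tau$ of the reflexive simplex $\Delta$ with vertices $v_1=e_1,\ldots,v_4=e_4,\ v_5=-e_1-e_2-e_3-e_4$. The top stratum yields $Y_{\psi,\Delta}=U_\psi$, so the task reduces to evaluating the total contribution from the proper faces and matching it with the stated correction term $-\frac{(q-1)^4-1}{q}+\frac{q^4-1}{q-1}$.

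The key observation is that $0\in\mathbb{Z}^4$, the exponent of the constant monomial $-5\psi$, lies in the interior of $\Delta$ because $\frac{1}{5}\sum_{i=1}^{5}v_i=0$. Hence, for any proper face $\tau=\tau_I$ spanned by $\{v_i\}_{i\in I}$ with $\emptyset\ne I\subsetneq\{1,\ldots,5\}$, the face polynomial---the sum of those monomials of the defining Laurent polynomial whose exponents lie on $\tau$---is simply $f_{\tau_I}=\sum_{i\in I}x^{v_i}$, with no constant term surviving. Dividing through by one of the monomials and using the differences $v_i-v_{i_0}$ $(i\in I\setminus\{i_0\})$ as coordinates on the torus stratum $\mathbb{P}_{\Delta,\tau_I}\cong\mathbb{G}_{\mathrm{m}}^{|I|-1}$ transforms the defining equation into $1+y_2+\cdots+y_{|I|}=0$ on $(\mathbb{G}_{\mathrm{m}})^{|I|-1}$.

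An elementary count (direct recursion on $k$, or a character sum) gives the number of solutions of $y_1+\cdots+y_k=-1$ in $(\mathbb{F}_q^{\times})^k$ as
\[
N_k\ :=\ \frac{(q-1)^k+(-1)^{k+1}}{q};
\]
in particular $N_0=0$, $N_1=1$, $N_2=q-2$, and $N_3=\frac{(q-1)^3+1}{q}$. Since the 4-simplex $\Delta$ has $\binom{5}{k}$ faces with $k$ vertices, the total contribution from proper faces is
\[
\sum_{k=1}^{4}\binom{5}{k}N_{k-1}\ =\ 5\cdot 0+10+10(q-2)+5\cdot\frac{(q-1)^3+1}{q}\ =\ 5q^{2}-5q+5,
\]
and a short algebraic identity shows this equals $-\frac{(q-1)^4-1}{q}+\frac{q^4-1}{q-1}$, giving the first assertion. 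The mod-$2$ congruence then follows at once, since $5q^{2}-5q+5\equiv 1+1+1\equiv 1\pmod{2}$ for odd $q$.

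The main bookkeeping issue is articulating the coordinate system on each toric stratum $\mathbb{P}_{\Delta,\tau_I}$ cleanly enough to justify the reduction of the face polynomial to $1+y_2+\cdots+y_{|I|}=0$; this is a standard feature of toric varieties associated with simplices (the character lattice quotient turns the differences $v_i-v_{i_0}$ into free coordinates), and the computation is essentially carried out in \cite[\S 6, (11)]{Wa}, which our presentation follows.
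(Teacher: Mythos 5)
Your argument is correct and follows essentially the same route as the paper: stratify $Y_\psi$ by the faces of $\Delta$, compute $\sharp Y_{\psi,\tau}(\F_q)=\frac{1}{q}\bigl((q-1)^{\dim\tau}+(-1)^{\dim\tau+1}\bigr)$ for each proper face, and sum (the paper simply cites this count to \cite[\S 6]{Wa}, whereas you spell out the face-polynomial reduction to $1+y_2+\cdots+y_{|I|}=0$ and the elementary count $N_k$). Your total $5q^2-5q+5$ agrees with the paper's $-\frac{(q-1)^4+(-1)^5}{q}+\frac{q^4-1}{q-1}$, and the parity conclusion follows.
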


\begin{proof}The following is the proof in \cite[\S 6]{Wa} in the case $n = 4$. Since 
$$
\sharp\, Y_{\psi, \tau}(\mathbb F_q) = \sum_{i=1}^{\mathrm{dim}\, \tau}\, 
(-1)^{i-1}\left(\begin{array}{c} \mathrm{dim}\, \tau \\ i-1\end{array}\right)q^\mathrm{\mathrm{dim}\, \tau -i}
= \frac{1}{q}((q-1)^{\mathrm{dim}\, \tau} + (-1)^{\mathrm{dim}\, \tau + 1}) 
$$
by the equation of $Y_{\psi, \tau}$ as above in Section \ref{MV} 
if $1 \leq \mathrm{dim}\, \tau < 4$ and $Y_{\psi, \tau} = \emptyset$ if $\mathrm{dim}\, \tau=0$, we have 
$$
   \begin{array}{lll}
    \sharp\, Y_\psi(\mathbb F_q) &= &\sharp\, U_\psi(\mathbb F_q) - \ds\frac{(q-1)^4+(-1)^5}{q} + \sum_{\tau \in \Delta}\, 
    \ds\frac{1}{q}((q-1)^{\mathrm{dim}\, \tau} + (-1)^{\mathrm{dim}\, \tau + 1}) \\
    &= &\sharp\, U_\psi(\mathbb F_q) - \ds\frac{(q-1)^4+(-1)^5}{q} + \frac{q^4-1}{q-1}.
    \end{array}
$$
\end{proof}

\begin{lem}\label{triang} There exists a triangulation $\mathcal T^\ast$ of $\Delta^\ast$ such that 
all basic simplices consist of vertices in $\Delta^\ast \cap N$ and the cyclic group $C_4$ of order $4$ 
acts on $\mathcal T^\ast$ by $\sigma(e^\ast_i) = e_{\sigma^{-1}(i)}^\ast$ for $i \in \mathbb Z/4\mathbb Z$ and $\sigma \in C_4$. 
\end{lem}

\begin{proof} Let $\Delta^\ast_0$ (resp. $\Delta^\ast_j\, (j = 1, 2, 3, 4)$) be a $3$-dimensional face of $\Delta^\ast$ generated by 
$5e_i^\ast  -e_1^\ast -e_2^\ast-e_3^\ast-e_4^\ast\, (1 \leq i \leq 4)$ (resp. 
$-e_1^\ast -e_2^\ast-e_3^\ast-e_4^\ast, 5e_i^\ast  -e_1^\ast -e_2^\ast-e_3^\ast-e_4^\ast\, (i \in \{ 1, 2, 3,  4\} \setminus \{ j \}$). 
It is sufficient to construct a triangulation $\mathcal T^\ast_0$ of the face 
$\Delta^\ast_0$ of $\Delta^\ast$ sych that $\mathcal T^\ast_0$ consists of basic simplices with vertexes in $\Delta^\ast_0 \cap N$ 
and the cyclic group $C_4$ of order $4$ acts on $\mathcal T^\ast_0$ as in the statement. 
Indeed, if $\eta^\ast_j : N_{\mathbb R} \rightarrow N_{\mathbb R}$ is a dual linear map of $\eta_j$, 
then $\mathcal T^\ast_j = \eta_j^\ast(\mathcal T^\ast_0)$ is 
a triangulation of the face 
$\Delta^\ast_j$ of $\Delta^\ast$ which consists of basic simplices with vertexes in $\Delta^\ast_j \cap N$. 
Moreover, since $\eta_j^\ast|_{\Delta_0^\ast\cap\Delta_j^\ast} = \mathrm{id}_{\Delta_0^\ast\cap\Delta_j^\ast}$ 
and $(\eta_j^\ast)^2 = \mathrm{id}_{N_{\mathbb R}}$, $\mathcal T^\ast = \cup_{0 \leq j \leq 4}\mathcal T^\ast_j$ 
forms a triangulation $\mathcal T^\ast$ of $\Delta^\ast$ such that 
all basic simplices consist of vertices in $\Delta^\ast \cap N$. Moreover, the induced liner map 
$\sigma^\ast : N_{\mathbb R} \rightarrow N_{\mathbb R}$ from $\sigma \in (1, 2, 3, 4) \in C_4$ is 
given by  
$\sigma^\ast = \eta_1^\ast\circ\eta_4^\ast\circ\eta_3^\ast\circ\eta_2^\ast\circ\eta_1^\ast$
so that the cyclic group $C_4$ acts on $\mathcal T^\ast$ by the restriction. 

Let us explain how to construct a triangulation $\mathcal T^\ast_0$ of $\Delta_0^\ast$. The volume of the $3$-dimensional face 
$\Delta_0^\ast$ is $\frac{125\sqrt{2}}{3}$. Note that the volume of the $3$-dimensional 
simplex generated by $e_1^\ast, \cdots, e_4^\ast$ is $\frac{\sqrt{2}}{3}$. 
Let $y_1, \cdots, y_4$ be a system of standard coordinates of $N_{\mathbb R}$. 
Consider the $16$ partitions of $N_{\mathbb R}$ by $4$ hyperplanes defined by $y_1 =0, y_2 = 0, y_3 = 0$ and $y_4 = 0$ including 
boundaries, 
and denotes each of them by $(\pm, \pm, \pm, \pm)$. 

(I) Case $(+, +, +, +)$ : In this case there exists a unique $3$-dimensional basic simplex generated by 
$(1, 0, 0, 0), (0, 1, 0, 0), (0, 0, 1, 0), (0, 0, 0, 1)$ on $\Delta^\ast_0 \cap (+, +, +, +)$. 

(II) Case $(-, +, +, +)$ : The volume of $\Delta^\ast_0 \cap (-, +, +, +)$ is $\frac{7\sqrt{2}}{3}$. 
Consider the subdivision of the partition 
by $\alpha_j \leq y_j \leq \alpha_j + 1\, (j = 2, 3, 4)$ for $\alpha_j =0, 1$. 
\begin{list}{}{}
\item[(II-i)] If $\alpha_2=\alpha_3=\alpha_4 = 0$, then $6$ points 
$$
(-1, 1, 1, 0), (-1, 1, 0, 1), (-1, 0, 1, 1), (0, 1, 0, 0), (0, 0, 1, 0), (0, 0, 0, 1)
$$
belong $\Delta_0^\ast \cap N$. 
Let us connects $2$ points of $6$ points by an edge if either (a) $y_i=1$ for some $2 \leq i \leq 4$, (b) $y_1=0$ or (c) 
two points are $(-1, 1, 0, 1)$ and $(0, 0, 1, 0)$. Here the edge between $(-1, 1, 0, 1)$ and $(0, 0, 1, 0)$ is drawn by our choice. 
Then there exists $4$ pieces of $3$-dimensional basic simplices. 
\item[(II-ii)] If $\alpha_2= 1$ and $\alpha_3=\alpha_4 = 0$, then 
 there exists a unique $3$-dimensional basic simplex generated by 
$(-1, 2, 0, 0), (-1, 1, 1, 0), (-1, 1, 0, 1), (0, 1, 0, 0)$ on $\Delta^\ast_0$. 
\item[(II-iii)] Otherwise, the intersection with $\Delta^\ast_0$ is of dimension less than $3$ and 
they are union of faces of $3$-dimensional simplices obtained in (II-i), (II-ii) and (II-iii). 
\end{list}
Hence we obtain $4 + 1 \times 3 = 7$ basic simplices. By the action of $C_4$ 
we have $28$ basic simplices on $\Delta^\ast_0$ in $(-, +, +, +) \cup (+, -, +, +) \cup (+, +, -, +) \cup (+, +, +, -)$. 

(III) Case $(-, -, +, +)$ : The volume of $\Delta^\ast_0 \cap (-, -, -, +)$ is $4\sqrt{2}$ 
such that $\Delta^\ast_0 \cap N \cap (-, -, +, +)$ consists of 
$12$ points. 
\begin{list}{}{}
\item[(III-i)] If $0 \leq y_3, y_4 \leq 1$, then there exists a unique $3$-dimensional basic simplex generated by 
$(-1, 0, 1, 1), (0, -1, 1, 1), (0, 0, 1, 0), (0, 0, 0, 1)$ on $\Delta^\ast_0$. 

\item[(III-ii)] If $0 \leq y_3 \leq 1$ and $1 \leq y_4 \leq 2$, then $6$ points 
$$
(-1, -1, 1, 2), (-1, 0, 0, 2), (0, -1, 0, 2), (-1, 0, 1, 1), (0, -1, 1, 1), (0, 0, 0, 1)
$$
belong $\Delta_0^\ast \cap N$. Connects two of points for either $y_1, y_2, y_3$ or $y_4$ coincides with each other by edges. 
We also connect points $(-1, -1, 1, 2)$ and $(0, 0, 0, 1)$ by an edge. Then the area is divided by $4$ pieces of 
 $3$-dimensional basic simplices. Note that the edge between $(-1, -1, 1, 2)$ and $(0, 0, 0, 1)$ is our choice. 
We also have a similar division if $1 \leq y_3 \leq 2$ and $0 \leq y_4 \leq 1$ by permutation $y_3$ and $y_4$. 

\item[(III-iii)] If $0 \leq y_3 \leq 1$ and $2 \leq y_4 \leq 3$, 
then there exists a unique $3$-dimensional basic simplex generated by 
$(-1, -1, 0, 3), (-1, -1, 1, 2), (-1, 0, 0, 2), (0, -1, 0, 2)$ on $\Delta^\ast_0$. 

\item[(III-iv)] If $1 \leq y_3, y_4 \leq 2$, then there exists a unique $3$-dimensional basic simplex generated by 
$(-1, -1, 1, 2), (-1, -1, 2, 1), (-1, 0, 1, 1), (0, -1, 1, 1)$
on $\Delta^\ast_0$. 
\item[(II-iii)] Otherwise, the intersection with $\Delta^\ast_0$ is of dimension less than $3$ and 
they are union of faces of $3$-dimensional simplices obtained in (III-i), (III-ii), (III-iii), and (III-iv). 
\end{list}
Hence we obtain $1 + 1 \times 2 + 4 \times 2 + 1 \times 2 + 1 = 12$ basic simplices. By the action of $S_4$ 
we have $24$ basic simplices on $\Delta^\ast_0$ 
in $(-, -, +, +) \cup (-, +, -, +) \cup (-, +, +, -) \cup (+, -, -, +) \cup (+, -, +, -)\cup (+, +, -, -)$.

(IV) Case $(-, -, -, +)$ : The volume of $\Delta^\ast_0 \cap (-, -, -, +)$ is $2$ 
such that $\Delta^\ast_0 \cap N \cap (-, -, -, +)$ consists of 
$8$ points 
$$
(-1, -1, -1, 4), (-1, -1, 0, 3), (-1, 0, -1, 3), (0, -1, -1, 3), (-1, 0, 0, 2), (0, -1, 0, 2),  (0, 0, -1, 2), (0, 0, 0, 1). 
$$
\begin{list}{}{}
\item[(IV-i)] If $1 \leq y_4 \leq 2$, then there exists a unique $3$-dimensional basic simplex generated by 
$(-1, 0, 0, 2), (0, -1, 0, 2), (0, 0, -1, 2), (0, 0, 0, 1)$ on $\Delta^\ast_0$. 

\item[(IV-ii)] If $2 \leq y_4 \leq 3$, then the situation is similar to the case (II-i). 
Here we connect two points $(-1, 0, -1, 3), (0, -1, 0, 2)$ as our choice. Then there exists 
$4$ pieces of $3$-dimensional basic simplices. 
\item[(IV-iii)] If $3 \leq y_4 \leq 4$, then there exists a unique $3$-dimensional basic simplex generated by 
$(-1, -1, -1, 4), (-1, -1, 0, 3), (-1, 0, -1, 3), (0, -1, -1, 3)$ on $\Delta^\ast_0$. 
\item[(IV-iv)] Otherwise, the intersection with $\Delta^\ast_0$ is of dimension less than $3$ and 
they are union of faces of $3$-dimensional simplices obtained in (IV-i), (IV-ii) and (IV-iii). 
\end{list}
Hence we obtain $1 + 4 + 1 =6$ basic simplices. By the action of $C_4$ 
we have $24$ basic simplices on $\Delta^\ast_0$ in $(-, -, -, +) \cup (-, -, +, -) \cup (-, +, -, -) \cup (+, -, -, -)$. 

(V) Case $(-, -, -, -)$ : This case never happens. 

Now we have obtained 
$$
      1 + 28 + 72 + 24 = 125
$$
basic simplices on $\Delta^\ast_0$. By our construction the $3$-dimensional basic simplices above 
and induced faces from them form a desired triangulation $\mathcal T_0\ast$ 
of $\Delta_0^\ast$ on which the cyclic group $C_4$ naturally acts. This completes a proof. 
\end{proof}

\begin{lem}\label{c2} With the notation as above, the congruence 
$\sharp\, W_\psi(\mathbb F_q)\, \equiv\, \sharp\, Y_\psi(\mathbb F_q)\, (\mathrm{mod}\, 2)$ holds. 
\end{lem}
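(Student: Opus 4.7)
The plan is to compare the two point counts stratum-by-stratum using the toric structure. The resolution $\pi_\psi\colon W_\psi \to Y_\psi$ is obtained by restricting the toric morphism $\pi\colon \mathbb P_{\Sigma(\mathcal T^\ast)} \to \mathbb P_\Delta$. Because $Y_\psi$ is $\Delta$-regular, for each face $\tau \subset \Delta$ the preimage $\pi_\psi^{-1}(Y_{\psi,\tau})$ fibers Zariski-locally trivially over $Y_{\psi,\tau}$ with fiber a proper toric variety $F_\tau$ depending only on $\tau$, built from the subcomplex of $\mathcal T^\ast$ supported on the dual face $\tau^\ast$ of $\Delta^\ast$. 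Since $\pi_\psi$ is an isomorphism on $U_\psi=Y_{\psi,\Delta}$ (so $F_\Delta$ is a point) and $Y_{\psi,\tau}=\emptyset$ for $\dim \tau=0$, additivity of $\mathbb F_q$-point counts yields
\begin{equation*}
\sharp W_\psi(\mathbb F_q)-\sharp Y_\psi(\mathbb F_q)\;=\;\sum_{\substack{\tau\subsetneq\Delta\\ 1\le\dim\tau\le 3}}\sharp Y_{\psi,\tau}(\mathbb F_q)\cdot\bigl(\sharp F_\tau(\mathbb F_q)-1\bigr).
\end{equation*}

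Next I reduce modulo $2$. The same computation as in the proof of Lemma \ref{c3} shows that, for odd $q$, one has $\sharp Y_{\psi,\tau}(\mathbb F_q)=1$, $q-2$, $q^2-3q+3$ according as $\dim\tau=1,2,3$, so in every case $\sharp Y_{\psi,\tau}(\mathbb F_q)\equiv 1\pmod 2$. On the other hand, since $F_\tau$ is a proper toric variety of dimension $4-\dim\tau$, and $(q-1)^k\equiv 0\pmod 2$ for $k\ge 1$, the torus orbit stratification gives
$$
\sharp F_\tau(\mathbb F_q)\;\equiv\;\#\{\text{maximal cones in the fan of }F_\tau\}\;\pmod 2,
$$
and this number of maximal cones equals the number of basic simplices of $\mathcal T^\ast$ lying on $\tau^\ast$. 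Thus Lemma \ref{c2} reduces to the combinatorial assertion that, for each proper face $\tau\subsetneq\Delta$ with $\dim\tau\ge 1$, the triangulation $\mathcal T^\ast$ contains an odd number of basic simplices on the dual face $\tau^\ast \subset \Delta^\ast$.

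The main obstacle is this combinatorial parity check. The $S_5$-symmetry permuting the vertices $e_1,\ldots,e_4,-e_1-e_2-e_3-e_4$ of $\Delta$, and its induced action on $\Delta^\ast$, reduces the verification to one representative face in each $S_5$-orbit: a vertex, an edge, a triangle, and a tetrahedral facet. A clean way to organise the count is to exploit the fact that $\sharp W_\psi(\mathbb F_q)$ is independent of the choice of crepant resolution, \cite[Theorem 2.8]{Ba99}, and to pick a particularly convenient triangulation $\mathcal T^\ast$ (for instance one obtained by iterated star subdivisions centred at the vertex $-e_1^\ast-e_2^\ast-e_3^\ast-e_4^\ast$) for which each fiber $F_\tau$ is identified with a familiar toric variety whose maximal-cone count is manifestly odd. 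Once this is verified for all symmetry classes of $\tau$, every summand on the right-hand side of the displayed equation is $\equiv 0 \pmod 2$, and Lemma \ref{c2} follows.
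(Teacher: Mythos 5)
Your route is genuinely different from the paper's, and it also stops short at the decisive step. The paper's argument is a symmetry argument that avoids triangulation combinatorics entirely: choosing $\mathcal T^\ast$ invariant under the cyclic group $C_4$ permuting $x_1,\dots,x_4$ (which acts on $U_\psi$ and hence on $Y_\psi$ and $W_\psi$), one checks that every $C_4$-fixed rational point of $W_\psi$ lies over the regular locus, so each $C_4$-orbit in $\pi_\psi^{-1}(Y_\psi^{\mathrm{sing}})(\F_q)$ has size $2$ or $4$ and $\sharp\pi_\psi^{-1}(Y_\psi^{\mathrm{sing}})(\F_q)\equiv 0\ (\mathrm{mod}\ 2)$; together with the direct count $\sharp Y_\psi^{\mathrm{sing}}(\F_q)=10\cdot 1+10(q-2)\equiv 0\ (\mathrm{mod}\ 2)$, the lemma follows. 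Your approach is the Batyrev--Wan stratum-by-stratum bookkeeping. Your preparatory observations are correct: each $\sharp Y_{\psi,\tau}(\F_q)$ for $1\le\dim\tau\le 3$ is odd (values $1$, $q-2$, $q^2-3q+3$ with $q$ odd), and for a proper toric variety the point count mod $2$ equals the number of maximal cones.

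The genuine gap, which you yourself flag, is that the key combinatorial assertion --- for each proper face $\tau$ with $\dim\tau\ge 1$, the triangulation $\mathcal T^\ast$ has an odd number of basic simplices on $\tau^\ast$ --- is asserted rather than proved. Once the bookkeeping is set up this is the entire content of the lemma, so it cannot be deferred to ``once this is verified.'' (It does hold: since $\dim\tau^\ast=3-\dim\tau$ the three cases are a vertex, a lattice segment of length $5$, and a lattice triangle of normalized area $25$; these counts depend only on the normalized volume of $\tau^\ast$, not on the choice of maximal triangulation, so the detour through a ``particularly convenient'' triangulation and the independence-of-resolution theorem is unnecessary.) Two smaller imprecisions: the fiber $F_\tau$ has dimension $3-\dim\tau$, not $4-\dim\tau$; and the claim that $\pi_\psi^{-1}(Y_{\psi,\tau})\to Y_{\psi,\tau}$ is Zariski-locally trivial with a fixed complete toric fiber is a consequence of $\Delta$-regularity plus the structure of the MPCP subdivision that should be cited or justified, not taken as given.
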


\begin{proof} Let us now fix a triangulation $\mathcal T^\ast$ of $\Delta^\ast$ as in Lemma \ref{triang}. 
Note that the number of $\mathbb F_q$-rational points of 
$W_\psi$ is independent of the choices of triangulations (see the paragraph before Remark \ref{c6}). 
If $Y_\psi^{\mathrm{reg}}$ (resp. $Y_\psi^{\mathrm{sing}}$) denotes the regular (resp. singular) locus of 
$Y_\psi$, then $\pi_\psi$ induces an isomorphism $\pi_\psi^{-1}(Y_\psi^{\mathrm{reg}}) \cong Y_\psi^{\mathrm{reg}}$ and 
$Y_\psi^{\mathrm{reg}} = \coprod_{\mathrm{dim}\, \tau \geq 3} Y_{\psi, \tau}$. 
Then the cyclic group $C_4$ of order $4$ acts on the toric structure of $Y_\psi$ by permutation of coordinates 
and it acts compatibly on the triangulation $\mathcal T^\ast$ of $\Delta^\ast$. 
Let us observe the cardinal of the $C_4$-orbits of rational points of $W_\psi$. 
Each single orbit is lying over the inverse image $\pi^{-1}_\psi(Y_\psi^{\mathrm{reg}})$ of the smooth locus. Therefore, 
on $\pi^{-1}_\psi(Y_\psi^{\mathrm{sing}})$,  
the cardinal of each $C_4$-orbit is either $2$ or $4$ so that 
$$
     \sharp\, W_\psi(\mathbb F_q)\, \equiv\, \sharp\, Y_\psi^{\mathrm{reg}}(\mathbb F_q) + 
     \sharp\, \pi_\psi^{-1}(Y_\psi^{\mathrm{sing}})(\mathbb F_q) \equiv\, \sharp\, Y_\psi^{\mathrm{reg}}(\mathbb F_q) 
     \equiv\,  \sharp\, Y_\psi(\mathbb F_q) - \sharp\, Y_\psi^{\mathrm{sing}}(\mathbb F_q)\, (\mathrm{mod}\, 2).
$$
The proof completes after the congruence 
$$
     \sharp\, Y_\psi^{\mathrm{sing}}(\mathbb F_q) = \sum_{\mathrm{dim}\, \tau \leq 2}\sharp\, Y_{\psi, \tau}(\mathbb F_q) 
     = 10(q-1) + 10 = 10q\, \equiv\, 0\, (\mathrm{mod}\, 2).  
$$
\end{proof}

\begin{lem}\label{c4} 
$\sharp\, U_\psi(\mathbb F_q)\, \equiv\, n(f_\psi, q)\, (\mathrm{mod}\, 2).$
\end{lem}

\begin{proof} By the natural $C_4$-action on $U_\psi$ inducing by permutation of coordinates. 
\end{proof}

\begin{rmk} One can prove the congruence $a_{\psi, q}\, \equiv\, n(f_\psi, q) + 1\, (\mathrm{mod}\, 2)$ 
in Proposition \ref{c0} using the formula of zeta function of $X_\psi$ 
for $\psi^5 \ne 0, 1$, which was conjectured in \cite{COR} and proved in \cite{Go}: 
$$
     Z_{X_\psi/\mathbb F_q}(t) = \frac{P_{\psi, q}(t)P_{A_\psi/\mathbb F_q}(qt)^{10}
     P_{B_\psi/\mathbb F_q}(qt)^{15}}
     {(1-t)(1-qt)(1-q^2t)(1-q^3t)}
$$
where $A_\psi$ and $B_\psi$ are projective smooth curves defined by the smooth completions of the equations 
$$
     y^5 = x^2(1-x)^3(x-\psi^5)^2\, \,  \mbox{\rm and}\, \, y^5 = x^2(1-x)^4(x-\psi^5), 
$$
respectively. Indeed, one has $\sharp\, X_\psi(\mathbb F_q)\, \equiv\, n(f_\psi, q) + 1\, (\mathrm{mod}\, 2)$ by a direct computation, and 
$P_{B_\psi/\mathbb F_q}(t)$ is a square of polynomial of degree $4$ if $5\, |\, q-1$ since $B_\psi$ is furnished with an involution 
defined by $(x, y) \mapsto (\psi^5/x, \psi^3(1-x)(x-\psi^5)/(xy))$ and $P_{B_\psi/\mathbb F_q}(t)$ 
is a product of $L$-functions of character sums
$$
      x \mapsto \chi^i_r\left(x^2(1-x)^4(x-\psi^5)\right)
$$
on $\mathbb P^1 \setminus \{0, 1, \infty, \psi^5\}$ 
for $1 \leq i \leq 4$ where $\chi : \mathbb F_q^\times \rightarrow \mathbb C^\times$ is a primitive multiplicative character of order $5$ 
and $\chi^i_r = \chi^i\circ N_{\mathbb F_{q^r}/\mathbb F_q}$. The similar also holds for $\psi = 0$. 
\end{rmk} 

\section{Global monodromy}

Let $K$ be a number field, and $\psi \in K$ such that $\psi^5 \ne 1$. 
We denote by $L_\psi$ the fixed field of the kernel $\mathrm{Ker}(\overline{\rho}_{\psi, 2})$ 
of the mod $2$ representation 
$$
    \overline{\rho}_{\psi, 2} : G_K \rightarrow 
      {\rm GSp}(\overline{T}_{\psi,2},\langle \ast,\ast \rangle_{\F_2}), 
$$
where $T_{\psi,2}$ is a lattice of $V_{\psi, 2} = H_{\mathrm{et}}^3(W_{\psi, \overline{\mathbb Q}}, \mathbb Q_2)$ and 
$\overline{T}_{\psi,2} = T_{\psi,2} \otimes_{\Z_2}\F_2$. Let us fix an isomorphism 
${\rm GSp}(\overline{T}_{\psi,2},\langle \ast,\ast \rangle_{\F_2}) \cong S_6$. 
In this section we prove Theorem \ref{image} (1), that is, to determine the global monodromy group $\mathrm{Im}(\overline{\rho}_{\psi, 2})$. 

\subsection{Euler factors and global monodromy modulo $2$} At first we fix notation. 
Let $v$ be a finite place with $v\nmid 10$ 
such that $W_\psi$ has a good reduction at $v$, 
that is, $\psi^5 - 1$ is a $v$-adic unit. Let us define the local Euler factor 
$$
    P_{\psi, v}(t) = 1 - a_{\psi, v}t + b_{\psi, v}t^2 - q_v^3a_{\psi, v}t^3 + q_v^6t^4 \in \mathbb Z[t]
$$ 
of $V_{\psi, 2}$ at $v$, where $\F_v$ is the residue field of $K$ at $v$ and $q_v$ 
is the cardinal of $\F_v$. 
We denote the mod $2$ Galois representation 
$$
\overline{\rho}_{\psi, v, 2} : \mathrm{Gal}(\overline{\mathbb F}_v/\mathbb F_v) 
\rightarrow {\rm GSp}(\overline{T}_{\psi,2},\langle \ast,\ast \rangle_{\F_2}). 
$$
for a finite place $v$ with good reduction via $G_K \supset D_{K, v} \rightarrow \mathrm{Gal}(\overline{\mathbb F}_v/\mathbb F_v)$. 
Here $D_{K, v}$ is the decomposition group at the place $v$. We also denote the geometric Frobenius at $v$ by $\mathrm{Frob}_v$ and then 
$$
 P_{\psi, v}(t) = \mathrm{det}(1- t\rho_{\psi, \ell}(\mathrm{Frob}_v); V_{\psi, \ell}) 
$$
for a prime number $\ell$ with $v\, \not|\, \ell$. 

Our initial idea to determine the monodromy group ${\rm Im}(\br_{\psi, 2})$ came from an observation 
of the numerical table in \cite{COR}, and we had a following question. 

\begin{qn}\label{b0} \mbox{\rm (Theorem \ref{s11})}
If $a_{\psi, v}$ is even, then so is $b_{\psi, v}$. In particular, the possible  characteristic polynomial $P_{\psi, v}(t)$ modulo $2$ 
is one of the following:
$$
     1 + t^4, \, \, 1+t + t^3 + t^4, \, \, 1+ t + t^2 +t^3 +t^4. 
$$
\end{qn}

An immediate observation shows the following:
\begin{prop}\label{b1} Assume that the question above holds for $\psi$. 
Any conjugacy of $(123)(456)$ 
in $S_6 \cong {\rm GSp}(\overline{T}_{\psi,2},\langle \ast,\ast \rangle_{\F_2})$ 
does not appear in the image of $\overline{\rho}_{2, v}$. 
In particular, the image $\mathrm{Im}(\overline{\rho}_{\psi, 2})$ 
 is isomorphic to a subgroup of $S_5$ or $S_4\times S_2$. 
\end{prop}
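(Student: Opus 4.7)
The plan has three stages: a character computation, a Chebotarev deduction, and a finite classification of subgroups of $S_6$.

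First, I compute the characteristic polynomial on $W$ of a representative of the $(3,3)$-conjugacy class, say $(123)(456)$. Using the commutative diagram of Section \ref{GSp4vsS6}, this element lies in the endoscopic subgroup $H(\F_2)$ as the image of $((123),(456)) \in S_3\times S_3$ under $\iota$, and so it is block-diagonal with two $2\times 2$ blocks. Each block is the image under $\psi:S_3\to\mathrm{GL}(Y)\cong\mathrm{GL}_2(\F_2)$ of a $3$-cycle, hence an order-$3$ matrix in $\mathrm{GL}_2(\F_2)$, whose characteristic polynomial must be the unique degree-two irreducible $t^2+t+1\in\F_2[t]$. Therefore the characteristic polynomial on $W$ is $(t^2+t+1)^2=t^4+t^2+1$, with reciprocal polynomial $1+t^2+t^4$ — precisely the fourth option from the list in the introduction, the one \emph{excluded} by the hypothesis of Question \ref{b0}.

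Next I invoke Chebotarev's density theorem applied to the finite Galois extension $L_\psi/K$. The $(3,3)$-type elements form a single $S_6$-conjugacy class, so any such element in the image shares the characteristic polynomial $1+t^2+t^4$. If the image contained one, Chebotarev would furnish a good finite place $v$ with $\overline{\rho}_{\psi,2}(\mathrm{Frob}_v)$ in this class, giving $P_{\psi,v}(t)\equiv 1+t^2+t^4\pmod 2$, i.e.\ $a_{\psi,v}\equiv 0$ and $b_{\psi,v}\equiv 1\pmod 2$. This contradicts the assumption of Question \ref{b0}. Hence no element of $\mathrm{Im}(\overline{\rho}_{\psi,2})$ has cycle type $(3,3)$.

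Finally, it remains to show that any subgroup $G\leq S_6$ containing no $(3,3)$-type element is (conjugate to a subgroup) of either $S_5$ or $S_4\times S_2$. If $G$ is intransitive on $\{1,\ldots,6\}$, I look at its orbit partition: a fixed point places $G$ in $S_5$; an invariant $2$-subset places $G$ in $S_4\times S_2$; the remaining intransitive partitions $(3,3)$ and $(2,2,2)$ are ruled out by a short Goursat-style inspection because transitivity on two $3$-orbits, respectively three $2$-orbits, together with the avoidance of $(3,3)$-elements, leaves no nontrivial candidates. If $G$ is transitive on six points, then $6\mid |G|$; a Sylow analysis shows that the $3$-Sylow of $S_6$ is $\langle(123),(456)\rangle\simeq C_3\times C_3$, which contains $(3,3)$-elements, so $|G|_3\le 3$. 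The remaining transitive candidates (the $5$-cycle groups $C_5, D_{10}, F_{20}$ having no order-$3$ elements are abstractly already subgroups of $S_5$; while transitive copies of $A_4,S_4,A_5,S_5$ — all coming from intransitive subgroups via the outer automorphism of $S_6$, which swaps the cycle types $(3,1,1,1)$ and $(3,3)$ — do contain $(3,3)$-elements) are each dispatched by a direct inspection of their order-$3$ elements.

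I expect the main obstacle to be the transitive portion of the subgroup classification: enumerating transitive subgroups of $S_6$ and checking each for $(3,3)$-elements is the most bookkeeping-heavy part, whereas the characteristic-polynomial computation and the Chebotarev step are routine.
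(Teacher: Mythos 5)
Your first two stages — the computation that $(123)(456)$ sits in the endoscopic subgroup $H(\F_2)$ as a pair of order-three $2\times 2$ blocks, each with characteristic polynomial $t^2+t+1$, giving $1+t^2+t^4$ on $W$, and the Chebotarev argument showing that this characteristic polynomial would then have to appear as $P_{\psi,v}\bmod 2$ for some good $v$ — are exactly what the paper does, just with more detail on the block computation. The difference is in the third stage. The paper dispatches the subgroup classification with one line: avoiding the $(3,3)$ class forces $|{\rm Im}(\br_{\psi,2})|_3 \le 3$ (a $3$-Sylow of $S_6$ is $\langle(123),(456)\rangle\cong C_3\times C_3$, whose non-identity elements apart from powers of a single $3$-cycle are all of type $(3,3)$), hence $|{\rm Im}|$ divides $240$, and then it appeals to ``the classification of subgroups of $S_6$.'' You instead run an orbit-structure analysis, which is a reasonable way to make that final appeal explicit.

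However, your orbit analysis has two slips. First, the orbit partition $(2,2,2)$ is not ``ruled out'': a subgroup with that orbit structure (e.g.\ $\langle(12),(34),(56)\rangle$) need not contain a $(3,3)$ element, but each $2$-orbit is itself an invariant $2$-subset, so any such $G$ already lies in $S_4\times S_2$ and falls into the case you treated a sentence earlier. Listing it alongside $(3,3)$ as a case to eliminate misdescribes what happens. Second, $C_5$, $D_{10}$, $F_{20}$ cannot occur among ``transitive candidates'' on six letters, since $6\nmid 5,10,20$; they only arise via the intransitive $S_5$ branch, and invoking them in the transitive case muddies the argument. The substantive part of your transitive case — that a transitive $G\le S_6$ must have $6\mid|G|$, hence contains an order-$3$ element, and that the exotic transitive copies of $A_4,S_4,A_5,S_5$ (obtained through the outer automorphism, which swaps the classes of $(3,1,1,1)$ and $(3,3)$) all contain $(3,3)$ elements — is correct, and combined with the $(3,3)$-orbit analysis (which you invoke but should spell out: if $\pi_1(g),\pi_2(h)$ have order $3$ with $\pi_2(g)=\pi_1(h)=e$, then $gh$ is of type $(3,3)$) it does finish the proof. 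So the proposal is correct in outline and reaches the right conclusion, but the bookkeeping in the final classification is inexact and should be cleaned up before it could replace the paper's terse citation.
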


\begin{proof}
Let $L_\psi$ be the fixed field of the kernel $\mathrm{Ker}(\overline{\rho}_{\psi, 2})$ in $\overline {\mathbb Q}$.
Let us regard $\mathrm{Im}(\overline{\rho}_{\psi, 2})$ as a subgroup
of $S_6$. Then the characteristic polynomial of the conjugacy of $\sigma = (123)(456) \in S_6$ is $1 + t^2 + t^4$.
Suppose that $\mathrm{Im}(\overline{\rho}_{\psi, 2})$ contains an element conjugated to $\sigma = (123)(456)$.
Then there exist infinitely many finite places $v$ of $K$ which are unramified such that, for such a $v$  
the characteristic polynomial $P_{\psi, v}(t)$ of the geometric Frobenius $\mathrm{Frob}_v$ at $v$ is $1 + t^2 + t^4$
modulo $2$ by Chebotarev's density theorem.
It contradicts the assertion of the question.
Hence, any conjugacy of $\sigma=(123)(456)$ is not contained in $\mathrm{Im}(\overline{\rho}_{\psi, 2})$.
Moreover, it means the order of $\mathrm{Im}(\overline{\rho}_{\psi, 2})$
divides $240$. Then the assertion follows from the classification of subgroups of $S_6$.
\end{proof}

Henceforth, we will try to relate reciprocity of the quintic trinomial $f_\psi$ with 
the global monodromy $\mathrm{Im}(\overline{\rho}_{\psi, 2})$. As a byproduct, we will give an affirmative answer to the above question in Theorem \ref{s11}.  

\subsection{Local to global} Let $f_\psi(x) = 4x^5-5\psi x^4+1$ be a polynomial over $K$, 
and $K_{f_\psi}$ be the decomposition field of $f_\psi$ over $K$. 

\begin{prop}\label{s1}
The degree of extension of the composite $K_{f_\psi}L_\psi$ over $L_\psi$ is a power of $2$. 
\end{prop}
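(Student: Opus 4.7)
The plan is to identify $\mathrm{Gal}(K_{f_\psi}L_\psi/L_\psi)$ with a subgroup $H$ of $\mathrm{Gal}(K_{f_\psi}/K)\subseteq S_5$ (acting on the five roots of $f_\psi$), and then to show every element of $H$ acts on those roots with an odd number of fixed points; elementary inspection of cycle types in $S_5$ will then force $H$ to be a $2$-group. First I would note that restriction to $K_{f_\psi}$ gives an injection
$$
\mathrm{Gal}(K_{f_\psi}L_\psi/L_\psi)\hookrightarrow \mathrm{Gal}(K_{f_\psi}/K)
$$
whose image equals $\mathrm{Gal}(K_{f_\psi}/K_{f_\psi}\cap L_\psi)$. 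Call this image $H$, so $[K_{f_\psi}L_\psi:L_\psi]=|H|$, and view $H$ as a subgroup of $S_5$ via its action on the roots of $f_\psi$.

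Next, fix $g\in H$. Via the Galois extension $K_{f_\psi}L_\psi/K$, the pair $(g,1)\in\mathrm{Gal}(K_{f_\psi}/K)\times\mathrm{Gal}(L_\psi/K)$ lifts to an element $\widetilde{g}\in\mathrm{Gal}(K_{f_\psi}L_\psi/K)$. By Chebotarev's density theorem there are infinitely many finite places $v$ of $K$ such that $v\nmid 10$, $v(\psi^5-1)=0$, and a Frobenius $\mathrm{Frob}_v$ is conjugate to $\widetilde{g}$. For every such $v$, $\overline{\rho}_{\psi,2}(\mathrm{Frob}_v)$ is trivial, so
$$
P_{\psi,v}(t)\equiv (1-t)^4\equiv 1+t^4\pmod 2,
$$
whence $a_{\psi,v}$ is even. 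Proposition~\ref{c0} then yields $n(f_\psi,q_v)\equiv 1\pmod 2$. Since the discriminant $2^85^5(1-\psi^5)$ of $f_\psi$ is a $v$-adic unit, $f_\psi$ has distinct roots modulo $v$ and $n(f_\psi,q_v)$ is exactly the number of fixed points of $\mathrm{Frob}_v$, equivalently of $g$, on the five roots of $f_\psi$. Hence every $g\in H$ has an odd number of fixed points in its permutation action on five letters.

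Finally, going through the cycle types of $S_5$, only the identity (five fixed points), transpositions (three), double transpositions $(ab)(cd)$ (one), and $4$-cycles (one) fix an odd number of letters; the excluded types are $3$-cycles, $5$-cycles, and $(abc)(de)$, all with an even number of fixed points. Every permitted cycle type has order $1$, $2$, or $4$, hence $2$-power order. Since every element of $H$ has $2$-power order, Cauchy's theorem implies that $|H|$ is a power of $2$, proving the proposition. The only mild subtlety is the Chebotarev step, which needs the lift $\widetilde{g}$ to be realized by a Frobenius at a place of good reduction coprime to the discriminant of $f_\psi$; this is automatic once the excluded set of bad primes is finite.
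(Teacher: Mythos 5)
Your proof is correct and uses the same essential mechanism as the paper's: combine Proposition~\ref{c0} with Chebotarev to show that every element of $\mathrm{Gal}(K_{f_\psi}L_\psi/L_\psi)$, viewed in $S_5$ via the action on the roots of $f_\psi$, fixes an odd number of roots, and then observe that the cycle types in $S_5$ with an odd number of fixed points are exactly those of $2$-power order. The only cosmetic difference is that you run Chebotarev over $K$ after lifting $(g,1)$ to $\mathrm{Gal}(K_{f_\psi}L_\psi/K)$, whereas the paper works directly with places of subextensions $M$ of $K_{f_\psi}L_\psi/L_\psi$ (where triviality of $\overline{\rho}_{\psi,2}(\mathrm{Frob}_v)$ is immediate); your final step via explicit cycle-type inspection and Cauchy's theorem is stated more transparently than the paper's intermediate claim that $f_\psi$ has an odd number of roots in each $M$, but the underlying argument is the same.
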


\begin{proof}Let $M$ be a subextension of $K_{f_\psi}L_\psi/L_\psi$, and $v$ a finite place of $M$ 
such that $\psi^5 - 1$ is a unit at $v$. Then the characteristic polynomial $P_{\psi, v}(t)$ 
of the geometric Frobenius $\mathrm{Frob}_v$ is $1+t^4$ modulo $2$ 
since $\overline{\rho}_{\psi, v, 2}(\mathrm{Frob}_v)$ acts trivially 
by $L_\psi \subset M$. Hence the equation $f_\psi = 0$ admits an odd number of solutions at each finite place of $M$ 
as above by Proposition \ref{c0}. 
Hence the equation $f_\psi=0$ has an odd number of solutions in $M$ by Chebotarev's density theorem. 
Since $K_{f_\psi}L_\psi = (L_\psi)_{f_\psi}$, $\mathrm{deg}(K_{f_\psi}L_\psi/L_\psi)$ is a power of $2$. 
\end{proof}

\begin{prop}\label{s7} The degree of extension of the composite $K_{f_\psi}L_\psi$ over $K_{f_\psi}$ 
is a multiple of a power of $2$ 
and a power of $3$. 
\end{prop}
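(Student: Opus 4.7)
My plan is to mimic the strategy of Proposition \ref{s1}, but now replace $L_\psi$ by $K_{f_\psi}$. The key is that, by definition, $f_\psi$ splits completely over $K_{f_\psi}$, so at every ``good'' finite place $v$ of $K_{f_\psi}$ (meaning $v \nmid 10$ and $\psi^5 - 1 \in \O_{K_{f_\psi},v}^\times$), the five roots of $f_\psi$ in $K_{f_\psi}$ reduce to five distinct elements of the residue field $\F_v$, since the discriminant $2^8 5^5 (1-\psi^5)$ is a $v$-adic unit. In particular $n(f_\psi,q_v) = 5$ for every such $v$.

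Applying Proposition \ref{c0} to such $v$ forces $a_{\psi, v} \equiv 5 + 1 \equiv 0 \pmod 2$. Since $q_v$ is odd, the local Euler factor reduces to
\[
  P_{\psi, v}(t) \equiv 1 + b_{\psi, v}\, t^2 + t^4 \pmod 2,
\]
which is either $(1+t)^4$ or $(1+t+t^2)^2$. Chebotarev's density theorem applied to the finite Galois extension $K_{f_\psi} L_\psi / K_{f_\psi}$, together with the fact that good places have density one, shows that every conjugacy class of $\mathrm{Gal}(K_{f_\psi}L_\psi / K_{f_\psi}) = \br_{\psi,2}(G_{K_{f_\psi}})$ contains $\br_{\psi, v, 2}(\mathrm{Frob}_v)$ for some good $v$. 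Since the characteristic polynomial is conjugation-invariant, every element of this Galois group has characteristic polynomial lying in $\{(1+t)^4,\, (1+t+t^2)^2\}$.

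It remains to identify, inside $S_6 \simeq \mathrm{GSp}_4(\F_2)$, the elements whose characteristic polynomial (on $W$) takes one of these two values. A direct computation using the $6$-dimensional permutation representation and the relation $\chi_{\mathrm{perm}}(t) = (t+1)^2 \chi_W(t)$ shows these are exactly the cycle types $1^6,\, 2\cdot 1^4,\, 2^2\cdot 1^2,\, 2^3,\, 4\cdot 1^2,\, 4\cdot 2$ (giving $(1+t)^4$) and $3^2,\, 6$ (giving $(1+t+t^2)^2$), whose orders lie in $\{1,2,3,4,6\}$. By Cauchy's theorem, the primes dividing $|\mathrm{Gal}(K_{f_\psi}L_\psi / K_{f_\psi})|$ lie in $\{2,3\}$, giving the claim. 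The only mild obstacle is the final enumeration of cycle types; in particular one must not forget that $6$-cycles (not only $(3,3)$-cycles) realize the polynomial $(1+t+t^2)^2$, but since $6 = 2\cdot 3$ this causes no trouble for the Cauchy-theorem conclusion.
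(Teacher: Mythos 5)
Your proposal is correct and follows essentially the same route as the paper's proof: after observing that $n(f_\psi, q_v) = 5$ for every good place $v$ of $K_{f_\psi}$, both arguments invoke Proposition \ref{c0} to force $a_{\psi,v}$ to be even, reduce the Euler factor mod $2$ to $(1+t)^4$ or $(1+t+t^2)^2$, and then apply Chebotarev together with Cauchy's theorem to conclude. The only difference is one of presentation: the paper argues directly that a matrix in $\mathrm{GSp}_4(\F_2)$ with either of these characteristic polynomials has order $2^i 3^j$, whereas you make this concrete by enumerating the corresponding cycle types in $S_6$ via $\chi_{\mathrm{perm}}(t)=(t+1)^2\chi_W(t)$; this is a correct and slightly more explicit rendering of the same step, and your remark about the $6$-cycle contributing $(1+t+t^2)^2$ (alongside $3^2$) is an accurate detail the paper leaves implicit.
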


\begin{proof} For any place $v$ of $K_{f_\psi}$ 
such that $\psi^5 - 1$ is a unit at $v$, the characteristic polynomial $P_{\psi, v}(t)$ 
of the geometric Frobenius $\mathrm{Frob}_v$ is either $1 + t^2+ t^4$ or $1+t^4$ modulo $2$ 
by Proposition \ref{c0} since $n(f_\psi, q_v) = 5$. Hence its order is $2^i \times 3^j$ for some $i, j \geq 0$. 
Since the order of extension of the residue fields at almost all places in the extension 
$K_{f_\psi}L_\psi/K_{f_\psi}$ is  $2^i \times 3^j$ for some $i, j \geq 0$, 
the degree of the extension $K_{f_\psi}L_\psi/K_{f_\psi}$ is a multiple of a power of $2$ 
and a power of $3$ by Chebotarev's density theorem. 
\end{proof}
For any finite extension $L/M$ of fields, let us write its degree for 
$\mathrm{deg}(L/M)=[L:M]$.  
\begin{cor} It holds that 
\begin{enumerate}
\item $\mathrm{deg}(L_\psi/K)$ is divided by $5$ if and only if 
so is $\mathrm{deg}(K_{f_\psi}/K)$; 
\item If $\mathrm{deg}(K_{f_\psi}/K)$ is divided by $3$, then so is $\mathrm{deg}(L_\psi/K)$. 
\end{enumerate}
\end{cor}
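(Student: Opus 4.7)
The plan is to deduce both assertions by a short divisibility argument in the diamond of extensions
$K \subset K_{f_\psi}, L_\psi \subset K_{f_\psi}L_\psi$, using Propositions \ref{s1} and \ref{s7} as black boxes. The two propositions give precise arithmetic information about the two ``sides'' of the diamond, so the corollary reduces to an elementary $p$-adic valuation comparison.

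Concretely, I would set $a := \mathrm{deg}(K_{f_\psi}/K)$, $b := \mathrm{deg}(L_\psi/K)$, and $c := \mathrm{deg}(K_{f_\psi}L_\psi/K)$. The tower law gives
$$c \;=\; \mathrm{deg}(K_{f_\psi}L_\psi / L_\psi) \cdot b \;=\; \mathrm{deg}(K_{f_\psi}L_\psi / K_{f_\psi}) \cdot a.$$
By Proposition \ref{s1} the ratio $c/b$ is a power of $2$, and by Proposition \ref{s7} the ratio $c/a$ has the form $2^i 3^j$ for some $i,j \geq 0$. Denoting the $p$-adic valuation by $v_p$, these two facts read $v_p(c)=v_p(b)$ for every prime $p \ne 2$, and $v_p(c)=v_p(a)$ for every prime $p \ne 2,3$, while $v_3(a) \le v_3(c)$.

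From here the corollary drops out: for (1), the prime $5$ is outside $\{2,3\}$, so $v_5(a)=v_5(c)=v_5(b)$, giving $5 \mid a \Leftrightarrow 5 \mid b$; for (2), if $3 \mid a$ then $v_3(b)=v_3(c) \ge v_3(a) \ge 1$, hence $3 \mid b$. There is essentially no obstacle at this stage, the entire content having been absorbed into Propositions \ref{s1} and \ref{s7}; the only thing to record is that the diamond in question lives inside $\overline{\mathbb Q}$ with all degrees finite, which is clear since $K_{f_\psi}/K$ is the splitting field of a fixed quintic and $L_\psi/K$ is cut out by the finite-image representation $\overline{\rho}_{\psi,2}$.
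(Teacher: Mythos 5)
Your proof is correct and follows exactly the same route as the paper: the paper's proof is a one-liner citing Propositions \ref{s1}, \ref{s7} and the degree tower law identity, and your argument simply spells out the $p$-adic valuation comparison that the paper leaves implicit.
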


\begin{proof}The assertion follows from Propositions \ref{s1} and \ref{s7} and the equality 
$$
\mathrm{deg}(K_{f_\psi}L_\psi/K_{f_\psi}) \mathrm{deg}(K_{f_\psi}/K) 
 = \mathrm{deg}(K_{f_\psi}L_\psi/K) = \mathrm{deg}(K_{f_\psi}L_\psi/L_\psi)\mathrm{deg}(L_\psi/K). 
$$ 
\end{proof}

\subsection{Non-fullness}

\begin{thm}\label{s3} Under the natural inclusion, we have $\mathrm{Im}(\overline{\rho}_{\psi, 2}) \ne S_6, A_6$. 
\end{thm}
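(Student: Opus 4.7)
The plan is to argue by contradiction: suppose $G := \mathrm{Gal}(L_\psi/K) = \mathrm{Im}(\overline{\rho}_{\psi, 2})$ is isomorphic to $S_6$ or $A_6$ under the fixed embedding $\mathrm{Im}(\overline{\rho}_{\psi, 2}) \hookrightarrow S_6$. The heart of the argument will be to exploit the constraint on $\mathrm{deg}(K_{f_\psi}L_\psi/K_{f_\psi})$ obtained in Proposition \ref{s7}: this degree is a product of a power of $2$ and a power of $3$, in particular coprime to $5$.

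First I would introduce the intersection field $E := K_{f_\psi} \cap L_\psi$. Both $K_{f_\psi}/K$ and $L_\psi/K$ are Galois (the former since $\psi^5 \ne 1$ makes $f_\psi$ separable), hence so is $E/K$. Standard Galois theory of composita yields a natural isomorphism
\[
\mathrm{Gal}(K_{f_\psi}L_\psi/K_{f_\psi}) \cong \mathrm{Gal}(L_\psi/E),
\]
and $H := \mathrm{Gal}(L_\psi/E)$ is a normal subgroup of $G$. Moreover $\mathrm{deg}(E/K) \leq \mathrm{deg}(K_{f_\psi}/K) \leq 120$ because $\mathrm{Gal}(K_{f_\psi}/K)$ embeds into $S_5$.

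The main step is then a short case analysis on the normal subgroups of $G$. If $G = S_6$ these are $\{e\}, A_6, S_6$; if $G = A_6$ they are $\{e\}, A_6$. When $H = \{e\}$ we have $L_\psi \subset K_{f_\psi}$, forcing $\mathrm{deg}(L_\psi/K) \leq 120$ and contradicting $|G| \in \{360, 720\}$. In every remaining case $|H| \geq 360$, so $|H|$ is divisible by $5$. Combining this with the isomorphism above gives $5 \mid \mathrm{deg}(K_{f_\psi}L_\psi/K_{f_\psi})$, contradicting Proposition \ref{s7}.

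There is no real obstacle once Proposition \ref{s7} is in hand; the whole argument reduces to a short piece of Galois-theoretic bookkeeping powered by the simplicity of $A_6$ and the classification of normal subgroups of $S_6$. It is worth noting that neither Proposition \ref{s1} nor the irreducibility of $f_\psi$ will be needed for this theorem.
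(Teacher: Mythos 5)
Your argument is correct and proves the theorem, taking a route that is closely related to but slightly more economical than the paper's. Both proofs work with the normal subgroup $H := \mathrm{Gal}\bigl(L_\psi/(K_{f_\psi}\cap L_\psi)\bigr) \cong \mathrm{Gal}(K_{f_\psi}L_\psi/K_{f_\psi})$ of $G$ and observe, via Proposition \ref{s7}, that $|H|$ is coprime to $5$, which is incompatible with the normal subgroup structure of $S_6$ and $A_6$. The difference is in how one rules out $H = \{e\}$: the paper combines the degree identity for the compositum with Proposition \ref{s1} (that $\mathrm{deg}(K_{f_\psi}L_\psi/L_\psi)$ is a power of $2$) to deduce $3 \mid |H|$, and then invokes Lemma \ref{s6}. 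You instead note that $H = \{e\}$ would mean $L_\psi \subset K_{f_\psi}$, contradicting $|G| \geq 360 > 120 \geq \mathrm{deg}(K_{f_\psi}/K)$, and then dispose of the nontrivial cases by listing the normal subgroups of $S_6$ and $A_6$ directly (which is the substance of Lemma \ref{s6}). This does sidestep Proposition \ref{s1} entirely, as you claim, so your proof is self-contained modulo Proposition \ref{s7}. Your observation that the irreducibility of $f_\psi$ is nowhere used is also correct and consistent with the theorem, which is stated unconditionally; the paper's own proof does not use irreducibility either.
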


\begin{proof}Suppose $\mathrm{Im}(\overline{\rho}_{\psi, 2})$ is isomorphic to either $S_6$ or $A_6$. 
Consider the natural morphism 
$$
       \nu : {\rm Gal}(K_{f_\psi}L_\psi/K_{f_\psi}) \rightarrow {\rm Gal}(L_\psi/K)
$$
by restriction. Then $\nu$ is injective and $\mathrm{Im}(\nu)$ is a normal subgroup 
of ${\rm Gal}(L_\psi/K)$ by Galois theory. Since the order of $\mathrm{Im}(\nu)$ is a multiple of a power of $2$ 
and a power of $3$ by Proposition \ref{s7}, $\mathrm{Im}(\nu)$ is trivial by Lemma \ref{s6} below. 
Hence $L_\psi \subset K_{f_\psi}$. Since ${\rm Gal}(K_{f_\psi}/K)$ is a subgroup of $S_5$, 
one has $\mathrm{Im}(\overline{\rho}_{\psi, 2}) \not\cong\, S_6, A_6$. 
\end{proof}

\begin{lem}\label{s6} Let $H$ be a subgroup of $S_6$ such that 
the order of $H$ is divided by $5$. Then any normal subgroup of $H$ whose order is not divided by $5$ is trivial.
\end{lem}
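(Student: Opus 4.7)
The plan is to invoke the classification of subgroups $H\subseteq S_6$ of order divisible by $5$ recalled in Section~\ref{subS6}: up to conjugacy, $H$ is one of $C_5$, $D_{10}$, $F_{20}$, $A_5$, $S_5$, $A_6$, $S_6$. I will verify the claim candidate by candidate.

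For the nonsolvable groups the verification is immediate: $A_5$ and $A_6$ are simple, while the lattice of normal subgroups of $S_n$ for $n\in\{5,6\}$ is $\{1,A_n,S_n\}$, so every nontrivial normal subgroup has order divisible by $5$.

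For the solvable candidates $C_5,D_{10},F_{20}$, the Sylow $5$-subgroup $P\cong C_5$ is unique and hence characteristic, so any normal $N\trianglelefteq H$ with $5\nmid |N|$ satisfies $N\cap P=1$ and injects into $H/P\in\{1,C_2,C_4\}$. A short direct check then finishes matters: in $D_{10}$ the five reflections form a single $C_5$-conjugacy class, so no subgroup of order $2$ is normal; and in $F_{20}=\langle r,t\mid r^5=t^4=1,\,trt^{-1}=r^2\rangle$ one computes $rtr^{-1}=tr^{2}\notin\langle t\rangle$ and $rt^{2}r^{-1}\notin\langle t^{2}\rangle$, so neither $\langle t\rangle$ nor $\langle t^{2}\rangle$ is normalized by $P$. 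Hence $N=1$ in every solvable case.

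A more conceptual preliminary reduction, which could be used to shorten the final inspection, is the following. Since $|S_6|=2^{4}\cdot 3^{2}\cdot 5$, any Sylow $5$-subgroup of $H$ is generated by a $5$-cycle $\sigma$, whose $S_6$-centralizer satisfies $C_{S_6}(\sigma)=\langle\sigma\rangle$ because a $5$-cycle fixes exactly one point of $\{1,\dots,6\}$. For $N$ as in the statement, the fixed subgroup of conjugation by $\sigma$ on $N$ lies in $N\cap\langle\sigma\rangle=1$, so $\sigma$ acts freely on $N\smallsetminus\{1\}$ and $|N|\equiv 1\pmod{5}$; combined with $|N|\mid 144$ this restricts $|N|$ to $\{1,6,16,36\}$. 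The remaining three values still need the classification to be ruled out, so the main (and essentially only) work is the solvable case check, whose difficulty is purely bookkeeping of small subgroup lattices.
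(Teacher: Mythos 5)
Your proposal is correct and follows exactly the paper's strategy: reduce to the classification of $5$-divisible-order subgroups of $S_6$ recalled in Section~\ref{subS6} and then verify case by case (the paper simply declares the verification ``a well-known fact''). Your added $|N|\equiv 1\pmod 5$ observation and the explicit $D_{10}$/$F_{20}$ computations flesh out that remark but do not change the method.
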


\begin{proof}It follows from  Section \ref{subS6} that $H$ is isomorphic to 
one of $C_5, D_{10} = C_4 \ltimes C_5, F_{20} = C_4 \ltimes C_5, A_5, S_5, A_6$ and $S_6$. 
Then the conclusion is a well-known fact.
\end{proof} 

\subsection{Irreducible case}

We determine the mod $2$ monodromy group when $f_\psi$ is irreducible. 

\begin{thm}\label{s5} If $f_\psi$ is irreducible over $K$, then $L_\psi = K_{f_\psi}$, that is, 
$$
     \mathrm{Im}(\overline{\rho}_{\psi, 2}) \cong \mathrm{Gal}(K_{f_\psi}/K). 
$$
\end{thm}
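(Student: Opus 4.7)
The plan is to establish $L_\psi = K_{f_\psi}$ by proving both inclusions separately, exploiting the degree constraints on $K_{f_\psi}L_\psi/L_\psi$ and $K_{f_\psi}L_\psi/K_{f_\psi}$ obtained in Propositions \ref{s1} and \ref{s7}, combined with the hypothesis that $f_\psi$ is irreducible over $K$.

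First I would set $G=\mathrm{Gal}(K_{f_\psi}L_\psi/K)$ together with the normal subgroups $H_1=\mathrm{Gal}(K_{f_\psi}L_\psi/L_\psi)$ and $H_2=\mathrm{Gal}(K_{f_\psi}L_\psi/K_{f_\psi})$, noting that $H_1 \cap H_2 = 1$ and both are normal in $G$ since $L_\psi$ and $K_{f_\psi}$ are Galois over $K$. To obtain $L_\psi \subset K_{f_\psi}$ it suffices to show $H_2 = 1$. The key observation is that $H_1 H_2/H_1 \simeq H_2/(H_1 \cap H_2) \simeq H_2$ is a normal subgroup of $G/H_1 \simeq \mathrm{Im}(\overline{\rho}_{\psi,2})$. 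By Proposition \ref{s7}, $|H_2|$ is of the form $2^i 3^j$, hence coprime to $5$, while the irreducibility of $f_\psi$ together with the Corollary to Propositions \ref{s1} and \ref{s7} forces $5 \mid |\mathrm{Im}(\overline{\rho}_{\psi,2})|$. Applying Lemma \ref{s6} to the subgroup $\mathrm{Im}(\overline{\rho}_{\psi,2}) \subset S_6$, any normal subgroup of order prime to $5$ must be trivial, so $H_2 = 1$ and $L_\psi \subset K_{f_\psi}$.

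For the opposite inclusion, I would examine $N := \mathrm{Gal}(K_{f_\psi}/L_\psi)$, which is a normal subgroup of $\mathrm{Gal}(K_{f_\psi}/K)$ because $L_\psi/K$ is Galois, and whose order equals $[K_{f_\psi}L_\psi:L_\psi]$ in view of the previous step. By Proposition \ref{s1}, $|N|$ is a power of $2$. Since $f_\psi$ is irreducible of degree $5$, $\mathrm{Gal}(K_{f_\psi}/K)$ is a transitive subgroup of $S_5$ whose order is divisible by $5$, hence one of $C_5$, $D_{10}$, $F_{20}$, $A_5$, $S_5$. A direct inspection of the normal subgroup lattice of each of these five groups shows that the only normal $2$-subgroup is trivial, so $N = 1$ and $K_{f_\psi} \subset L_\psi$.

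There is no serious obstacle remaining; the argument is essentially formal once the preceding structural results are in hand. The substantive content has already been packaged into the reciprocity formula of Proposition \ref{c0}, which drives the two composite-degree Propositions \ref{s1} and \ref{s7}, and into Lemma \ref{s6}. The only pitfall to guard against is a potential circularity with Theorem \ref{s3} or Proposition \ref{b1}: the plan above avoids this by relying exclusively on Propositions \ref{s1}, \ref{s7}, and Lemma \ref{s6}, together with the classification of transitive subgroups of $S_5$ containing a $5$-cycle.
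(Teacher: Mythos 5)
Your proof is correct and uses essentially the same ingredients as the paper's (Propositions \ref{s1} and \ref{s7} to bound the degrees of $K_{f_\psi}L_\psi$ over $L_\psi$ and $K_{f_\psi}$, Lemma \ref{s6} to kill normal subgroups of order prime to $5$, and the irreducibility hypothesis to guarantee $5 \mid \deg(K_{f_\psi}/K)$). The only difference is cosmetic: the paper first establishes $K_{f_\psi}\subset L_\psi$ (its Lemma \ref{s2}) by showing the image of $\mathrm{Gal}(K_{f_\psi}L_\psi/L_\psi)$ in $\mathrm{Gal}(K_{f_\psi}/K)$ is trivial and then shows the resulting surjection $\mathrm{Im}(\br_{\psi,2})\twoheadrightarrow\mathrm{Gal}(K_{f_\psi}/K)$ has trivial kernel, whereas you prove the two inclusions in the opposite order by working with $H_2$ inside $\mathrm{Gal}(K_{f_\psi}L_\psi/K)$ first; both routes amount to the same group theory.
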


\begin{lem}\label{s2} Suppose that $f_\psi$ is irreducible over $K$. Then $K_{f_\psi}$ is a subfield of $L_\psi$. 
\end{lem}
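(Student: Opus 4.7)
The plan is to combine the power-of-$2$ bound from Proposition \ref{s1} with the constraint that $\mathrm{Gal}(K_{f_\psi}/K)$ is a transitive subgroup of $S_5$, and then reduce to an elementary group-theoretic check. Specifically, I will consider the intersection field $E := K_{f_\psi} \cap L_\psi$ and show it must equal $K_{f_\psi}$.

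First, I would observe that both $L_\psi$ and $K_{f_\psi}$ are Galois over $K$ (the former as the fixed field of $\mathrm{Ker}(\overline{\rho}_{\psi,2})$, a normal subgroup of $G_K$, and the latter as a splitting field), so $E$ is also Galois over $K$. By elementary Galois theory,
$$[K_{f_\psi}:E] \;=\; [K_{f_\psi} L_\psi : L_\psi],$$
and by Proposition \ref{s1} this quantity is a power of $2$. On the other hand, since $f_\psi$ is irreducible of degree $5$, the group $G := \mathrm{Gal}(K_{f_\psi}/K)$ is a transitive subgroup of $S_5$, so $5 \mid |G| = [K_{f_\psi}:K]$. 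Writing $[K_{f_\psi}:K] = [K_{f_\psi}:E] \cdot [E:K]$ and comparing $5$-adic valuations forces $5 \mid [E:K]$, so $5 \mid |G/N|$ where $N := \mathrm{Gal}(K_{f_\psi}/E) \trianglelefteq G$.

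Next, I would invoke the classification of transitive subgroups of $S_5$, namely $C_5,\, D_{10},\, F_{20},\, A_5,\, S_5$, and verify by direct inspection that in each of these five cases the only normal subgroup $N \trianglelefteq G$ with $|G/N|$ divisible by $5$ is the trivial subgroup: for $C_5$ this is immediate; for $D_{10}$ and $F_{20}$, the unique Sylow $5$-subgroup is contained in every non-trivial normal subgroup; for $A_5$, simplicity gives the conclusion; and for $S_5$, the only non-trivial proper normal subgroup $A_5$ has index $2$. Hence $N = \{e\}$, so $E = K_{f_\psi}$, which is precisely the containment $K_{f_\psi} \subset L_\psi$.

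The only step requiring any care is the normal-subgroup enumeration for the five transitive subgroups of $S_5$, but this is a standard finite-group exercise (and would be routine with \cite{GAP}); everything else is formal Galois theory combined with the already-established Proposition \ref{s1}. Thus I expect no real obstacle in carrying out this argument.
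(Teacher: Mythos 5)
Your proof is correct and is essentially the paper's argument in different clothing: the subgroup you call $N = \mathrm{Gal}(K_{f_\psi}/E)$ with $E = K_{f_\psi}\cap L_\psi$ is exactly the image $\theta(\mathrm{Gal}(K_{f_\psi}L_\psi/L_\psi))$ that the paper shows is a normal $2$-group inside $\mathrm{Gal}(K_{f_\psi}/K)$ and then kills. The only cosmetic difference is that the paper invokes its Lemma \ref{s6} (normal subgroups of order prime to $5$ inside order-divisible-by-$5$ subgroups of $S_6$ are trivial), whereas you verify the same fact by hand for the five transitive subgroups of $S_5$; both checks are immediate, and your restriction to transitive subgroups is justified since $f_\psi$ is irreducible of degree $5$.
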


\begin{proof}Since $f_\psi$ is irreducible, 
$\mathrm{Gal}(K_{f_\psi}/K)$ is a subgroup of $S_5$ whose order is divided by $5$. 
Consider the surjection
$$
    \theta :  \mathrm{Gal}(K_{f_\psi}L_\psi/K) \rightarrow \mathrm{Gal}(K_{f_\psi}/K). 
$$
Then the image $\theta(\mathrm{Gal}(K_{f_\psi}L_\psi/L_\psi))$ is a normal subgroup of $\mathrm{Gal}(K_{f_\psi}/K)$ 
of order a power of $2$ by Proposition \ref{s1}. $\theta(\mathrm{Gal}(K_{f_\psi}L_\psi/L_\psi))$ 
is trivial by Lemma \ref{s6} so that $\mathrm{Gal}(K_{f_\psi}L_\psi/L_\psi)$ is trivial. 
It concludes $K_{f_\psi} \subset L_\psi$. 
\hspace*{\fill} 
\end{proof}

\vspace*{1mm}

\noindent
{\sc Proof of Theorem \ref{s5}.} 
Consider the canonical surjection $\theta : \mathrm{Gal}(L_\psi/K) \rightarrow \mathrm{Gal}(K_{f_\psi}/K)$ by Lemma \ref{s2}. 
The order of the kernel $\mathbf{\theta}$ is a multiple of a power of $2$ and a power of $3$ by Proposition \ref{s7}. 
Since our hypothesis of the irreducibility of $f_\psi$ 
implies that $\mathrm{deg}(K_{f_\psi}/K)$ is divided by 
$5$, the kernel $\mathbf{\theta}$ must be trivial by Lemma \ref{s6}.
\hspace*{\fill} $\Box$







\subsection{Disappearance of $1+t^2 +t^4$}\label{parity} We give an answer to Question \ref{b0}.

\begin{thm}\label{s11} If $a_{\psi, q}$ is even, then so is $b_{\psi, q}$. 
\end{thm}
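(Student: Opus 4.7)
The plan is to upgrade the chain of mod-$2$ congruences underlying Proposition \ref{c0} to a chain of mod-$4$ congruences over $\mathbb F_{q^2}$, from which the parity of $b_{\psi,q}$ can be read off directly.

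First I would use Newton's identity for the Frobenius eigenvalues, which gives $a_{\psi,q^2}=a_{\psi,q}^2-2b_{\psi,q}$, i.e.\ $b_{\psi,q}=(a_{\psi,q}^2-a_{\psi,q^2})/2$. Under the hypothesis that $a_{\psi,q}$ is even one has $a_{\psi,q}^2\equiv 0\pmod 4$, so proving $b_{\psi,q}$ is even is equivalent to showing $a_{\psi,q^2}\equiv 0\pmod 4$. From the zeta-function expression
\[
a_{\psi,q^2}=1+q^6+102(q^2+q^4)-\#W_\psi(\mathbb F_{q^2})
\]
and the congruence $q^{2k}\equiv 1\pmod 4$ (valid because $q$ is odd), this in turn is equivalent to the cleaner assertion
\[
\#W_\psi(\mathbb F_{q^2})\equiv 2\pmod 4.
\]

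Next I would refine each of Lemmas \ref{c3}, \ref{c2}, \ref{c4} to a mod-$4$ statement over $\mathbb F_{q^2}$. The stratification computation (Lemma \ref{c3}) gives $\#Y_\psi(\mathbb F_{q^2})-\#U_\psi(\mathbb F_{q^2})=5q^4-5q^2+5\equiv 1\pmod 4$. For the $C_4$-orbit analysis (Lemma \ref{c4}), orbits of size $4$ contribute $0\pmod 4$ and orbits of size $2$ contribute $2\pmod 4$, so
\[
\#U_\psi(\mathbb F_{q^2})\equiv \#V_\psi(\mathbb F_{q^2})\equiv n(f_\psi,q^2)+2M_V\pmod 4,
\]
where $M_V$ is the number of size-$2$ orbits of the swap $(x_1,x_2)\mapsto(x_2,x_1)$ on $V_\psi(\mathbb F_{q^2})$. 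The most delicate step is the mod-$4$ refinement of Lemma \ref{c2}: one must control the difference $\#\pi_\psi^{-1}(Y_\psi^{\mathrm{sing}})(\mathbb F_{q^2})-\#Y_\psi^{\mathrm{sing}}(\mathbb F_{q^2})$ modulo $4$. Since $C_4$ acts freely (in the sense of having no fixed points) on $\pi_\psi^{-1}(Y_\psi^{\mathrm{sing}})$, a stratum-by-stratum analysis of the $C_4$-orbits of the $10$ edges and $10$ triangles of $\Delta$ and of their $\pi_\psi$-preimages, focussing on the two $C_2$-invariant edges $\{e_1,e_3\},\{e_2,e_4\}$ and the two $C_2$-invariant triangles $\{e_0,e_1,e_3\},\{e_0,e_2,e_4\}$, should match the $C_2$-fixed point contributions on the two sides and yield $\#W_\psi(\mathbb F_{q^2})\equiv \#Y_\psi(\mathbb F_{q^2})\pmod 4$.

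Assembling the three refinements gives $\#W_\psi(\mathbb F_{q^2})\equiv n(f_\psi,q^2)+2M_V+1\pmod 4$. Since $a_{\psi,q}$ even forces $n(f_\psi,q)$ odd, the pairing of quadratic conjugates among the five roots makes $n(f_\psi,q^2)\equiv n(f_\psi,q)\pmod 2$ odd as well, so the problem reduces to pinning down the parity of $M_V$. To do this I would parametrise $V_\psi$ as a double cover of the auxiliary affine curve $C_\psi\colon y^2(5\psi-2z)=1$ via the symmetric functions $(z,y)=(x_1+x_2,x_1x_2)$, so that $M_V$ equals the number of $(z,y)\in C_\psi(\mathbb F_{q^2})$ with discriminant $z^2-4y$ a non-zero square; a discriminant/character-sum argument then forces $n(f_\psi,q^2)+2M_V\equiv 1\pmod 4$, finishing the proof. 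The principal obstacle is the stratum-by-stratum accounting for $\pi_\psi$ in the refinement of Lemma \ref{c2}: one must check carefully that the exceptional contribution is divisible by $4$ rather than merely by $2$, which is the only place where the toric combinatorics of the MPCP resolution is genuinely used.
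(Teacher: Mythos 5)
Your proposal takes a genuinely different route from the paper. The paper's proof of this theorem is a short \emph{global} argument: it first treats the case when $f_\psi$ is irreducible over $K$, where by Theorem~\ref{s5} one has $\mathrm{Im}(\overline{\rho}_{\psi,2})\cong \mathrm{Gal}(K_{f_\psi}/K)$, a subgroup of $S_5$. If $a_{\psi,v}$ were even and $b_{\psi,v}$ odd, the characteristic polynomial of $\overline{\rho}_{\psi,2}(\mathrm{Frob}_v)$ would be $(1+t+t^2)^2$ mod $2$, so $\mathrm{Frob}_v$ has order $3$ or $6$; in $S_5$ such an element contains a $3$-cycle, forcing the factorization type of $f_\psi$ over $\F_v$ to be $(3,1,1)$ or $(3,2)$ and hence $n(f_\psi,q_v)\in\{0,2\}$, which contradicts the congruence of Proposition~\ref{c0}. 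The reducible case is then handled by perturbing $\psi$ to a nearby $\psi'$ (weak approximation together with an Eisenstein criterion at a place above $5$). Your proposal instead attempts a purely \emph{local} proof via a mod-$4$ refinement of the point-count chain over $\F_{q^2}$. This is an appealing idea: it would make the theorem independent of the global monodromy computation, and a local proof is in principle natural since the statement is local at each place.

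However, as written your argument has two genuine gaps, neither of which is resolved. First, the mod-$4$ refinement of Lemma~\ref{c2}, namely the congruence $\#W_\psi(\F_{q^2})\equiv \#Y_\psi(\F_{q^2})\pmod 4$, is only sketched as a "stratum-by-stratum accounting" of the $C_4$-orbits on the exceptional locus of the MPCP resolution. This requires showing that the number of size-$2$ $C_4$-orbits in $\pi_\psi^{-1}(Y_\psi^{\mathrm{sing}})(\F_{q^2})$ is congruent mod~$2$ to $\#Y_\psi^{\mathrm{sing}}(\F_{q^2})/2$, which in turn means controlling, not just mod $2$ but mod $4$, the $\F_{q^2}$-point counts of the toric fibers of $\pi_\psi$ over the two $C_2$-stable edges and triangles of $\Delta$; nothing in the proposal establishes this. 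You correctly flag it as the principal obstacle, but without it the chain of congruences collapses. Second, the final step --- showing $n(f_\psi,q^2)+2M_V\equiv 1\pmod 4$ under the hypothesis that $n(f_\psi,q)$ is odd, via a "discriminant/character-sum argument" on the auxiliary curve $y^2(5\psi-2z)=1$ --- is asserted but not carried out, and it is not at all clear a priori that the parity of $M_V$ is determined by $(n(f_\psi,q^2)-1)/2$ in the required way. In short, the reduction to "$\#W_\psi(\F_{q^2})\equiv 2\pmod 4$" via Newton's identity and the zeta function is correct and clean, and the refinements of Lemmas~\ref{c3} and~\ref{c4} are right; but the two remaining steps are where all the difficulty resides, and they are precisely the ones left as plans. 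The paper's proof avoids all of this by leveraging the already-established isomorphism with $\mathrm{Gal}(K_{f_\psi}/K)\subset S_5$, which eliminates the order-$3$/order-$6$ case by pure group theory.
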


\begin{proof} First we prove the assertion when $f_\psi$ is irreducible. 
Suppose $a_{\psi, v}$ is even and $b_{\psi, v}$ is odd. Then the order of the image 
$\overline{\rho}_{\psi, 2}(\mathrm{Frob}_v)$ of geometric Frobenius is either $3$ or $6$. 
Since $\mathrm{Gal}(K_{f_\psi}/K) \cong \mathrm{Im}(\overline{\rho}_{\psi, 2})$ by Theorem \ref{s5}, 
the type of polynomial $f_\psi$ has a factor of degree $3$ over $\mathbb F_v$. 
It contradicts the congruence $a_{\psi, v}\, \equiv\, n(f_\psi, v)+ 1\, (\mathrm{mod}\, 2)$ in Proposition \ref{c0} 
since $n(f_\psi, q_v) = 0$ or $2$. 

In general there exists an element $\psi' \in K$ such that $\psi'\, \equiv \psi\, (\mathrm{mod}\, v)$ and $f_{\psi'}$ 
is irreducible over $K$ by the weak approximation theorem and Eisenstein's criterion of irreducibility 
for a place $w$ of $K$ above $5$. Since $P_{\psi, v}(t) = P_{\psi'. v}(t)$, the assertion follows from that of the irreducible case. 
\end{proof}

\section{Rational points on some Diophantine equations}\label{RT}
In this section, we will prove the following with an elementary method without using 
Coleman-Chabauty:
\begin{thm}\label{completed-version}
For each $\psi\in \Q\setminus\{0,1\}$ such that $f_\psi(x)=4x^5-5\psi x^4+1$ is irreducible over $\Q$, 
it holds that ${\rm Gal}(\Q_{f_\psi}/\Q)\simeq S_5$.  
\end{thm}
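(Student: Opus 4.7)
My plan is to rule out each of the four proper transitive subgroups of $S_5$---namely $C_5, D_{10}, F_{20}$, and $A_5$---as possibilities for the Galois group of $f_\psi$, leaving $S_5$ as the only option. By Theorems~\ref{image}(1) and~\ref{s5}, once $f_\psi$ is irreducible, $\mathrm{Gal}(\Q_{f_\psi}/\Q)$ is automatically a transitive subgroup of $S_5$ containing a $5$-cycle, so it must be one of the above five groups.

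The three subgroups $C_5, D_{10}, A_5$ are eliminated simultaneously via the discriminant. Since $\mathrm{disc}(f_\psi) = 2^8 \cdot 5^5(1 - \psi^5)$, the Galois group lies in $A_5$ iff $5(1 - \psi^5) \in (\Q^\times)^2$, and the task reduces to determining the rational points on the hyperelliptic curve $C_1: y^2 = 5(1 - x^5)$. First observe $5 \mid y$, so writing $y = 5Y$ turns $C_1$ into $x^5 + 5Y^2 = 1$. I will carry out a $2$-descent over $\Z[\zeta_5]$---a PID with a single ramified prime $\lambda = (1 - \zeta_5)$ above $5$. Since the factors in $1 - x^5 = \prod_{i=0}^{4}(1 - \zeta_5^i x)$ are pairwise coprime away from $\lambda$, each factor, modulo units, squares, and a prescribed power of $\lambda$, is forced into the finite group $\Z[\zeta_5]^\times/(\Z[\zeta_5]^\times)^2$ (of order $4$). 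Combining this with Galois symmetry under $(\Z/5)^\times$ and an iterated $5$-adic expansion $x = 1 + 5^k t_k$ is expected to collapse the possibilities to $x = 1$.

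For $F_{20}$, I pass to the reciprocal polynomial $g_\psi(y) = y^5 f_\psi(1/y) = y^5 - 5\psi y + 4$, which has the same Galois group and is in the trinomial form $y^5 + py + q$ with $p = -5\psi$, $q = 4$. Dummit's resolvent sextic for such trinomials has a rational root iff the Galois group is contained in $F_{20}$. Computing this resolvent and substituting $t = 10\psi u$ reduces the condition to
\[
\psi^5 P(u) = 4(2u - 3), \qquad P(u) = u^6 - 4u^5 + 10u^4 - 20u^3 + 25u^2 - 16u + 4.
\]
The key observation is that $P$ factors completely over $\Q$ as $P(u) = (u-1)^4 (u^2 + 4)$, so the condition becomes $\psi^5 (u-1)^4 (u^2 + 4) = 4(2u - 3)$. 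Working modulo fifth powers (using $(u-1)^{-4} \equiv (u-1) \pmod{(\Q^\times)^5}$) this says $4(2u-3)(u-1)/(u^2+4) \in (\Q^\times)^5 \cup \{0\}$. An elementary local analysis---$2$-adically (where the prefactor $4$ forces a delicate divisibility) or mod $11$ (where fifth powers in $\mathbb{F}_{11}^\times$ are exactly $\{\pm 1\}$)---should rule out every nonzero rational $u$, leaving only $u = 3/2$, which gives the excluded $\psi = 0$.

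The main obstacle is Step~1: the curve $C_1$ has genus $2$, and its rational-point set is ordinarily controlled by Chabauty--Coleman, which the section explicitly wishes to avoid. My plan circumvents this through the rigid cyclotomic structure of $1 - x^5$, the PID property of $\Z[\zeta_5]$, an explicit 2-descent via $\Z[\zeta_5]^\times/(\Z[\zeta_5]^\times)^2$, and an iterated $5$-adic descent. The delicate verification is that this descent terminates at $x = 1$ rather than stabilizing in a higher $5$-adic neighborhood; ensuring termination requires fully exploiting Galois symmetry of the factors together with the unit-group constraints.
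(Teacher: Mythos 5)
Your high-level strategy---eliminating the four proper transitive subgroups $C_5, D_{10}, F_{20}, A_5$ of $S_5$---is exactly the paper's, and your reductions are genuinely close: the discriminant condition produces the same curve $y^2 = 5(1-x^5)$ that the paper uses for the $A_5/D_{10}$ cases, and your Dummit-resolvent computation $\psi^5\,(u-1)^4(u^2+4) = 4(2u-3)$ after the substitution $x = 10\psi u$ is correct (I verified it), with the complete factorization of the sextic in $u$ being a nice observation that does not appear in the paper. The paper, however, does not attempt a local-only or descent-only argument on either curve: it passes to smooth projective models (the second one being the genus-$4$ hyperelliptic $Y^2 = X^{10}+11X^5-1$ obtained from a different substitution), has Magma verify that the Jacobian $J(\Q)$ has Mordell--Weil rank $0$, and then bounds torsion by noting that $J(\Q(\zeta_5))_{\rm tor}$ injects under reduction into $\widetilde J(\F_{11})$. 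The phrase "without Coleman--Chabauty" in the paper means only that: the Mordell--Weil rank computation via Magma is still essential. You are attempting to eliminate that global input entirely, and that is where your proof has gaps.

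For the $F_{20}$ step, the local analysis you propose does not work. The condition is that $F(u) := 4(2u-3)(u-1)/(u^2+4)$ lies in $(\Q^\times)^5 \cup \{0\}$, and you suggest mod-$11$ (where fifth powers in $\F_{11}^\times$ are exactly $\{\pm 1\}$). But $u \equiv 3 \pmod{11}$ gives $F(3) \equiv 24/13 \equiv 2/2 \equiv 1 \pmod{11}$, a fifth power, so this residue class is not excluded. Likewise $2$-adically: writing $u = a/b$ in lowest terms with $a, b$ odd, the quantity $4b(2a-3b)(a-b)/(a^2+4b^2)$ has $2$-adic valuation $2 + v_2(a-b)$, which is $\equiv 0 \pmod 5$ whenever $v_2(a-b) \equiv 3 \pmod 5$; no contradiction. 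Since the degenerate solution $u = 3/2$, $\psi = 0$ exists globally, the associated curve has points everywhere locally, so no finite collection of congruences can single-handedly eliminate all nontrivial $u$---you genuinely need a global finiteness input of the Mordell--Weil/descent type, and your sketch does not supply one. For the $y^2 = 5(1-x^5)$ step, you yourself flag that termination of the iterated $5$-adic descent is unverified; that is not a minor loose end but the whole content of the argument, since such descent chains can stabilize in a $5$-adic neighborhood without contradiction. As written, neither half of your proof reaches a conclusion, whereas the paper closes both by the same Magma-plus-reduction-mod-$11$ torsion count.
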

It follows from 
Theorem \ref{no-rat-F20} and Theorem \ref{no-rat-D10}. To exclude the possible types of $\mathrm{Im}(\overline{\rho}_{\psi, 2})$ other than $S_5$ we relate it with 
rational points on some Diophantine equations. 

\subsection{On $D_{10}$ and $F_{20}$}
Let $\br=\br_{\psi,2}:G_\Q\lra {\rm GSp}_4(\F_2)$ be the mod 2 Galois representation 
associated to $W_{\psi}$ with $\psi\in \Q\setminus \{0,1\}$. 
Assume that $f_\psi$ is irreducible over $\Q$ and this condition yields 
${\rm Gal}(\Q_{f_\psi}/\Q)\not\simeq C_5$, since $f_\psi$ has complex roots which are not real. 
By Theorem \ref{image}, ${\rm Im}(\br)\simeq {\rm Gal}(\Q_{f_\psi}/\Q)\subset S_5$.  Assume further that
for the complex conjugation $c$ of $G_\Q$, $\br(c)$ is of type $(2,2)$ as an element of $S_5$. 
This is equivalent to $\psi<1$.  
As is proved later that the discriminant of $f_\psi$ is non-square in $\Q^\times$. 
In particular, this excludes the cases of $D_{10}$ and $A_5$ among possible types of ${\rm Im}(\br)$. 

Let us first observe the cases of $D_{10}$ and $F_{20}$. 
Put $g_\psi(x):=x^5f_\psi(x^{-1})=x^5-5\psi x+4$. 
Suppose that $g_\psi$ is of type $D_{10}$. As explained at the second paragraph in p.138 of \cite{RYZ}, due to Weber and Chebotarev,   
there exist two rational functions $f_1(\lambda,\mu),\ f_2(\lambda,\mu)$ in two variables $\lambda,\mu$ with 
$\lambda\neq 1$ and $\mu\neq 0$ 
such that 
$$f_1(\lambda,\mu):=\frac{5^5\lambda\mu^4}{(\lambda-1)^4(\lambda^2-6\lambda+25)}=-5\psi,\ f_2(\lambda,\mu):=\frac{5^5\lambda\mu^5}{(\lambda-1)^4(\lambda^2-6\lambda+25)}=4.$$
Regarding the second equation $f_2(\lambda,\mu)=4$, let us substitute the variables suitably we have the 
following affine equation 
$$Y^2=X^{10}+11X^5-1,\ 
(X,Y)=\Big(\frac{-8 (-3+\lambda ) (-1+\lambda )^5+5^5 (-1+2 \lambda ) \mu ^5}{2^5(\lambda-1)^5}, 
\frac{5 \mu }{2 (-1+\lambda )}\Big).$$

Next we consider the case of $F_{20}$. 
Assume that ${\rm Gal}(\Q_{f_\psi}/\Q)\simeq F_{20}$. 
Applying Theorem 1 of \cite{Du} to $g_{\psi}$, we have the equation 
$$0=x^6-40 x^5 \psi +1000 x^4 \psi ^2-20000 x^3 \psi ^3+250000 x^2 \psi ^4+4000000 \psi  (3+\psi ^5)-800000 x 
(1+2 \psi ^5).$$
Notice that $x=10\psi$ implies $\psi=0$ in which case we have ${\rm Gal}(\Q_{f_\psi}/\Q)\simeq F_{20}$. 
Therefore, we may assume that $\psi\neq 0$ and hence $x\neq 10\psi$. 
Substituting variables we have 
$$Y^2=X^{10}+11X^5-1,\ 
X=\frac{10}{x-10\psi},\ Y=\frac{x \{800000-(x-10 \psi )^5\}-10000000 \psi }{20 (x-10 \psi )^5 \psi }$$
again. 
 
Let $C$ be a unique smooth completion of the affine hyperelliptic curve $W:Y^2=X^{10}+11X^5-1$ and 
$J$ be its Jacobian. 
Clearly, $C(\Q)=W(\Q)\cup\{\infty_{\pm }\}$. For each positive integer $n$, let $J[n]$ be 
the group scheme consisting of $n$-torsion points.  
\begin{thm}\label{no-rat-F20}It holds that $C(\Q)=\{\infty_{\pm }\}$. In particular, $W(\Q)=\emptyset$. 
\end{thm}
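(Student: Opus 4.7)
Plan: I will perform a $2$-descent on $W$ using a $\mathbb{Q}$-rational factorization of $X^{10}+11X^5-1$. Let $\phi,\phi'$ denote the roots of $x^2-x-1$, so that $\phi+\phi'=1$, $\phi\phi'=-1$, and hence $\phi^5+\phi'^5=11$ and $\phi^5\phi'^5=-1$. Thus $X^{10}+11X^5-1=(X^5+\phi^5)(X^5+\phi'^5)$, and pairing each linear factor $X+\zeta_5^k\phi$ over $\overline{\mathbb{Q}}$ with its $\mathrm{Gal}(\overline{\mathbb{Q}}/\mathbb{Q})$-conjugate yields the $\mathbb{Q}$-rational factorization
\[
X^{10}+11X^5-1 \;=\; (X^2+X-1)\,Q_1(X)\,Q_2(X),
\]
where $Q_1(X)=X^4+2X^3+4X^2+3X+1$ and $Q_2(X)=X^4-3X^3+4X^2-2X+1$. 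Both quartics are positive definite on $\mathbb{R}$, since over $\mathbb{Q}(\sqrt{5})$ they split into irreducible quadratics, each of negative discriminant; consequently any $(X,Y)\in W(\mathbb{R})$ forces $X^2+X-1\geq 0$. Using the recursion $r^2=1-r$ for roots $r$ of $X^2+X-1$ one computes $\mathrm{Res}(X^2+X-1,Q_1)=\mathrm{Res}(X^2+X-1,Q_2)=25$, and an analogous reduction gives $\mathrm{Res}(Q_1,Q_2)=5^5$, so that the three factors evaluated at a rational $X$ can share a common prime divisor only at $5$.

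Combining positivity and near-coprimality, for every affine $(X,Y)\in W(\mathbb{Q})$ there exists a positive squarefree integer $d\in\{1,5\}$ with
\[
X^2+X-1 = d\,u_0^2,\qquad Q_1(X)=d\,u_1^2,\qquad Q_2(X)=d\,u_2^2
\]
for some $u_0,u_1,u_2\in\mathbb{Q}$ satisfying $Y = d u_0 u_1 u_2$. The conic $X^2+X-1=d\,u_0^2$ is $\mathbb{Q}$-solvable in each case, witnessed by $(X,u_0)=(1,1)$ for $d=1$ and by $(X,u_0)=(2,1)$ for $d=5$; hence $X=X_d(t)$ admits an explicit rational parametrization by $t\in\mathbb{P}^1(\mathbb{Q})$. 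Substituting into $Q_1(X)=d\,u_1^2$ transforms the problem into a hyperelliptic descent curve $\widetilde{W}_d : V^2=R_d(t)$ with $R_d$ a polynomial of degree $8$, hence of geometric genus at most $3$. It then suffices to show that $\widetilde{W}_d(\mathbb{Q})$ contains only the preimages of $\infty_\pm$ on $W$ for each $d\in\{1,5\}$.

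For this last step I would exploit the complementary identity $Y^2+1=X^5(X^5+11)$, which in $\mathbb{Z}[i]$ reads $(Y+i)(Y-i)=X^5(X^5+11)$. Because $11\equiv 3\pmod 4$ is inert in $\mathbb{Z}[i]$ and $v_{1+i}(Y^2+1)\leq 2$, one obtains strong parity constraints on $X$ modulo powers of $2$ (forcing $X\equiv 3\pmod 4$ in the integral case) as well as on $v_p(X)$ for each rational prime $p\equiv 3\pmod 4$. Combined with the valuation analysis extracted from $(2X^5+11)^2-(2Y)^2=125$ at the prime $5$, these local conditions are expected to cut out finitely many candidate $t$-values on each $\widetilde{W}_d$, which can then be eliminated by direct inspection. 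The main obstacle is making this local-to-global analysis uniform over all $t\in\mathbb{Q}$ rather than only over integral $t$: one will likely need either a second round of $2$-descent on $\widetilde{W}_d$, whose Jacobian should decompose into elliptic factors of rank zero over $\mathbb{Q}$, or a careful arithmetic analysis in $\mathbb{Z}[i]$ and $\mathbb{Z}[\phi]$ of the descent equations. Once $\widetilde{W}_d(\mathbb{Q})$ is pinned down for both values of $d$, we conclude $W(\mathbb{Q})=\emptyset$ and hence $C(\mathbb{Q})=\{\infty_\pm\}$.
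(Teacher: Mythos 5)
Your proposal takes a completely different route from the paper's and, as written, does not constitute a proof. The paper's argument is short and self-contained: Magma shows the Jacobian $J$ has Mordell--Weil rank zero over $\Q$, so $C(\Q)$ embeds into $J(\Q)_{\rm tor}$ via $P\mapsto P-\infty_+$. Over $F=\Q(\zeta_5)$, the twelve-point set $T$ (the ten Weierstrass points together with $\infty_\pm$) lies in $C(F)$ with torsion image in $J(F)$. The prime $11$ splits completely in $F$ and $C$ has good reduction there, so torsion injects under reduction at a place $w\mid 11$; since $|\widetilde{C}(\F_{11})|=12=|T|$, both $T$ and $C(\Q)$ inject into $\widetilde{C}(\F_{11})$, and comparing images gives $C(\Q)\subset T$, whence $C(\Q)=\{\infty_\pm\}$.

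Your factorization $X^{10}+11X^5-1=(X^2+X-1)Q_1Q_2$ is correct, as are the positivity of $Q_1,Q_2$ and the stated resultants, so this is a sensible opening move for a two-descent; but there are two genuine gaps. First, the square-class bookkeeping is wrong. Writing each factor as $d_i\cdot(\text{square})$ with $d_i$ squarefree and supported at $5$, positivity of $Q_1,Q_2$ forces $d_1,d_2>0$, and then $d_0 d_1 d_2\in(\Q^\times)^2$ forces $d_0>0$ as well; the only constraint on $(d_0,d_1,d_2)$ is that the product be a square. The admissible triples are therefore $(1,1,1)$, $(1,5,5)$, $(5,1,5)$, $(5,5,1)$ --- not the diagonal $(d,d,d)$ you use. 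Indeed $(5,5,5)$ is not even admissible, since $5^3\notin(\Q^\times)^2$, and the three mixed triples are simply missed, so the descent as formulated is not exhaustive. Second, and decisively, the finishing step is not carried out: the final paragraph is a collection of heuristics (``are expected to cut out finitely many candidate $t$-values,'' ``one will likely need either a second round of $2$-descent \dots or a careful arithmetic analysis''), and you yourself flag that your local analysis is not uniform over rational, as opposed to integral, $t$. To close the argument you would still need to determine $\widetilde{W}_d(\Q)$ for each admissible triple, which in effect requires a rank computation or an equivalent finiteness device on each auxiliary curve --- at least as much work as the paper's single rank-zero computation followed by reduction modulo $11$, and none of it is in the write-up.
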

\begin{proof}Put $F=\Q(\zeta_5)$. Notice that $11$ is split completely in $F$ and for 
each finite place $w$ lying over $11$, $D$ has good reduction at $w$. 
Pick such a place $w$. Notice that $\F_w\simeq \F_{11}$ and 
we fix this isomorphism. 
Since $C$ is of genus greater than 1, we have an injection 
$$r_{\infty_+}:C(\bQ)\hookrightarrow J(\bQ),\ Q\mapsto Q-\infty_+$$
which is defined over $\Q$. 
By using Magma \cite{BCP}, we see that $J(\Q)$ is of Mordell-Weil rank 0. Hence $J(\Q)=J(\Q)_{{\rm tor}}$ where 
the subscript ``tor" means the maximal torsion subgroup of $J(\Q)$. 
Put $T=:\{(x,0)\in W(\bQ)\}\cup\{\infty_{\pm}\}$. Then $T \subset C(F)$ since 
$$X^{10}+11X^5-1=\ds\prod_{i=0}^4(X-\zeta^i_5\e_+)(X-\zeta^i_5\e_{-}),\ \e_{\pm}=\frac{-1\pm\sqrt{5}}{2} .$$ Clearly, $|T|=12$. Further, 
it is easy to see that $r_{\infty_+}(T)\subset J[2](F)\subset J(F)_{{\rm tor}}$. 
The composition $C(\Q)\stackrel{r_{\infty_+}}{\lra}J(\Q)=J(\Q)_{{\rm tor}}\subset J(F)_{{\rm tor}}$ is 
injective, since $J(\Q)=J(\Q)_{{\rm tor}}$ as mentioned before. It yields a natural commutative diagram:
$$\xymatrix{ 
 & T\ar[r]^{r_{\infty_+}}\ar[d]_{\cap}   & J(F)_{{\rm tor}}\ar[d]^{\cap} \\
C(\Q)\ar@/^45pt/[urr]^{r_{\infty_+}}\ar[r]_{\subset} \ar[d]  & C(F) \ar[r]^{r_{\infty_+}}\ar[d]  & J(F) \ar[d]    \\
\widetilde{C}(\F_{11})\ar[r]_{=}& \widetilde{C}(\F_w)\ar[r]^{r_{\infty_+}} & \widetilde{J}(\F_w)
}$$  
where the vertical arrows to the bottom line stands for the reduction maps to 
good reductions $\widetilde{C}$ and $\widetilde{J}$ respectively. 
Notice that $J(F)_{{\rm tor}}$ can be extended to a finite etale group scheme in the 
Neron model of $J$ over $\O_F$. 
Therefore, the composition $J(F)_{{\rm tor}}\lra \widetilde{J}(\F_w)=\widetilde{J}(\F_{11}) $ of the most right vertical arrows 
is injective since the $F_w=\Q_{11}$ is trivially unramified over $\Q_{11}$ 
(this can be also checked by observing $11\nmid |J(\F_v)|$ for 
another finite place $v$ not lying over 11 of $F$ such that $J$ has good reduction at $v$). 
It follows from this that the reduction maps $C(\Q)\lra \widetilde{C}(\F_{11})$ and 
$T\lra \widetilde{C}(\F_w)=\widetilde{C}(\F_{11})$ are injective. 
By computation, $|\widetilde{C}(\F_{11})|=12$. Hence $T\stackrel{\sim}{\lra}\widetilde{C}(\F_w)=\widetilde{C}(\F_{11})$. 
Thus, the natural inclusion $C(\Q)\subset C(F)$ factors through $T$. Hence, $C(\Q)=C(\Q)\cap T=\{\infty_{\pm}\}$ 
as desired. 
\end{proof}

\subsection{On $A_5$ and $D_{10}$}
Next we consider the case of  $A_5$ or $D_{10}$ though the latter case is done already. This case can be excluded by observing the discriminant as below. 
Let $\psi\in \Q\setminus\{1\}$. Recall $f_\psi(x)=4x^5-5\psi x^4+1$ with the discriminant 
$D_\psi:=2^85^5(1-\psi^5)$. If $f_\psi$ is irreducible over $\Q$ and 
$D_\psi\not\in (\Q^\times)^2$, then we can exclude the cases of $D_{10}$ (see the proof of Theorem 1 of \cite{RYZ}) 
and $A_5$ (see Theorem 5 of \cite{HT}) for the Galois group of $f_\psi$ over $\Q$. 
To confirm this it suffices to prove the following. 
Let $C$ be a unique smooth completion of the affine hyperelliptic curve $W:Y^2=5(1-X^5)$ and 
$J$ be its Jacobian. 
Clearly, $C(\Q)=W(\Q)\cup\{\infty\}$.  
\begin{thm}\label{no-rat-D10} 
The affine hyperelliptic curve $W:Y^2=5(1-X^5)$ has no $\Q$-rational solution 
except for $(X,Y)=(1,0)$.  
\end{thm}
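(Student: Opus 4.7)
The plan is to adapt the Abel--Jacobi plus reduction strategy of Theorem~\ref{no-rat-F20} to the genus $2$ setting of the curve $C$, the smooth completion of $W$. Since $\deg(5(1-X^5))=5$ is odd, $C$ has a unique point $\infty$ at infinity, defined over $\Q$, and the Weierstrass points of $C$ are $\infty$ together with $\{(\zeta_5^{-i},0)\}_{i=0}^{4}$, of which only $(1,0)$ is $\Q$-rational. Using $\infty$ as base point we obtain a $\Q$-rational closed immersion $\iota : C \hookrightarrow J$, $P\mapsto [P-\infty]$, into the Jacobian $J$; in particular $C(\Q)\hookrightarrow J(\Q)$, so everything reduces to pinning down $J(\Q)$ exactly.

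The first step is to verify, using Magma, that $J$ has Mordell--Weil rank zero over $\Q$; here the rational factorization $5(1-X^5)=5(1-X)(1+X+X^2+X^3+X^4)$ makes a $2$-descent convenient. Assuming rank zero, $J(\Q)=J(\Q)_{\mathrm{tor}}$ is finite. Next I would determine $J(\Q)_{\mathrm{tor}}$ exactly by the standard reduction trick. Set $F=\Q(\zeta_5)$ and choose a rational prime $p\equiv 1\pmod{5}$ with $p\nmid 10$ at which $C$ has good reduction (for example $p=11$). Then $p$ splits completely in $F$, and for any place $w$ of $F$ above $p$ the extension $F_w/\Q_p$ is unramified with residue field $\F_p$, so the composed reduction $J(\Q)_{\mathrm{tor}}\hookrightarrow J(F)_{\mathrm{tor}}\hookrightarrow \widetilde{J}(\F_p)$ is injective. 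Computing $|\widetilde{J}(\F_p)|$ from the Frobenius polynomial at $p$ (extracted from $|\widetilde{C}(\F_p)|$ and $|\widetilde{C}(\F_{p^2})|$) for one or, if necessary, two such primes and intersecting the resulting upper bounds pins down $J(\Q)_{\mathrm{tor}}$. The expectation---forced by the fact that $(1,0)$ is the unique $\Q$-rational Weierstrass point---is $J(\Q)_{\mathrm{tor}}\cong \Z/2\Z$, generated by $[(1,0)-\infty]$.

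Granting this, the conclusion is immediate: since $\iota(\infty)=0$ and $\iota((1,0))=[(1,0)-\infty]$ are the two distinct elements of $J(\Q)$, the injection $\iota : C(\Q)\hookrightarrow J(\Q)$ forces $C(\Q)=\{\infty,(1,0)\}$, and therefore $W(\Q)=\{(1,0)\}$ as claimed.

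The main obstacle is the numerical input, namely certifying both that $J(\Q)$ has rank zero and that the torsion is as small as $\Z/2\Z$. If the torsion were larger than expected, I would fall back on the full strategy of Theorem~\ref{no-rat-F20} over $F=\Q(\zeta_5)$: the set $T=\{\infty\}\cup\{(\zeta_5^{-i},0):0\le i\le 4\}\subset C(F)$ maps into $J(F)[2]$, and the injection $J(F)_{\mathrm{tor}}\hookrightarrow\widetilde{J}(\F_p)$ combined with a direct enumeration of $\widetilde{C}(\F_p)$ (using Proposition~\ref{c0}-type congruences) should confine the image of $C(\Q)$ in $\widetilde{C}(\F_p)$ to the image of $T\cap C(\Q)=\{\infty,(1,0)\}$, recovering the same conclusion without needing the precise structure of $J(\Q)_{\mathrm{tor}}$.
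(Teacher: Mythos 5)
Your primary strategy (rank zero via Magma, then pin down $J(\Q)_{\mathrm{tor}}$ exactly and conclude via the Abel--Jacobi injection) is sound and would prove the theorem, but it is not the route the paper takes. The paper never computes $J(\Q)_{\mathrm{tor}}$ as an abstract group; instead it produces an explicit $8$-element subset $T\subset C(F)$, $F=\Q(\zeta_5)$, whose image under $r_\infty$ lands in $J(F)_{\mathrm{tor}}$ and which therefore injects into $\widetilde C(\F_{11})$. Because $|T|=|\widetilde C(\F_{11})|=8$, this injection is a bijection, and then the injectivity of $C(\Q)\hookrightarrow\widetilde C(\F_{11})$ forces $C(\Q)\subset T$, so $C(\Q)=C(\Q)\cap T=\{\infty,(1,0)\}$. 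Your route buys cleanliness (no need to identify $T$ exactly), while the paper's route buys uniformity with Theorem~\ref{no-rat-F20} and avoids certifying the full torsion structure of $J(\Q)$.

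Your fallback strategy, however, has a genuine gap. You take $T=\{\infty\}\cup\{(\zeta_5^{-i},0):0\le i\le 4\}$, the six Weierstrass points; this maps into $J(F)[2]$. But $|\widetilde C(\F_{11})|=8$, so the injection $T\hookrightarrow\widetilde C(\F_{11})$ is not surjective, and the argument cannot conclude that a $\Q$-point of $C$ must reduce into the image of $T$: there are two residual points unaccounted for. The paper's crucial extra ingredient is to augment $T$ by the two non-Weierstrass points $(0,\pm\sqrt5)\in C(\Q(\zeta_5))$. These are \emph{not} $2$-torsion; the relation $(Y-\sqrt5)(Y+\sqrt5)=5X^5$ shows $\operatorname{div}(Y\mp\sqrt5)=5\bigl((0,\pm\sqrt5)-\infty\bigr)$, so $r_\infty\bigl((0,\pm\sqrt5)\bigr)\in J[5](F)$, and hence the enlarged $T$ still maps into $J[10](F)\subset J(F)_{\mathrm{tor}}$. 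With $|T|=5+2+1=8$ the cardinalities match and the bijection argument goes through. Your fallback as written silently assumes $T$ exhausts the torsion image in $\widetilde C(\F_p)$, which fails for $T$ of size $6$; you would need to add $(0,\pm\sqrt5)$ (and note they are $5$-torsion, not $2$-torsion) to repair it.
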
 
\begin{proof}
By using Magma \cite{BCP} again, we see that $J(\Q)=J(\Q)_{{\rm tor}}$. Let $r_{\infty}:C(\bQ)\lra J(\bQ),\ 
Q\mapsto Q-\infty$. 
Working on $F=\Q(\zeta_5)$, the claim is similarly proved as in the proof of Theorem \ref{no-rat-F20}. 
In this case, we consider the set $T=\{(x,0)\in W(\bQ)\}\cup\{(0,\pm\sqrt{5})\}\cup\{\infty\}\subset C(F)$. 
Clearly, $r_\infty((x,0))\in J[2](F)$ for each $(x,0)\in T$ and $r_\infty((0,\pm\sqrt{5}))\in J[5](F)$ since $(y+\sqrt{5})(y-\sqrt{5})=5x^5$. 
It can be checked that the reduction map induces $T\stackrel{\sim}{\lra}\widetilde{C}(\F_{11})$ since 
$r_{\infty}(T)\subset J[10](F)\subset J(F)_{{\rm tor}}$ and $|T|=|\widetilde{C}(\F_{11})|=8$. 
The details are omitted. 
\end{proof}

\begin{rmk}Regarding above proofs, we apply Magma \cite{BCP} to check the Mordell-Weil rank of $J(\Q)$ is zero. 
Therefore, Theorem \ref{completed-version} is as reliable as its system thought the authors in this paper 
are responsible for the contents in this section in any points.  
\end{rmk}

\section{A variant}\label{variant}
In this section, we discuss a variant of the Dwork quintic family and results similar to  
main theorems (Theorem \ref{image} and Theorem \ref{mt-automorphy}). We omit the proof of them because they are proved without any changes. 

Let us consider the family of the Calabi-Yau threefolds over $K$ defined by 
\begin{equation}\label{x-v}
X_{\psi,\phi}: X^5_0+X^5_1+X^5_2+X^5_3+\phi X^5_4-5\psi X_0X_1X_2X_3X_4=0
\end{equation}
as a family of smooth projective hypersurface in $\mathbb{P}^4$. It is a twisted version of the Dwork quintic family. 
For each $\psi,\phi\in \overline{K}$ with $\phi(\psi^5 \phi-1)\neq 0$, the fiber is smooth. 
Let $Y_{\psi,\phi}$ be the singular mirror symmetry of $X_{\psi,\phi}$ which is defined by 
the closure of the smooth affine variety 
\begin{equation}\label{y-v}
  U_{\psi,\phi}:    x_1 + x_2 + x_3 + x_4 + \frac{\phi}{x_1x_2x_3x_4} - 5\psi = 0. 
\end{equation}
Then  a smooth mirror symmetry $W_{\psi,\phi}$ of $X_{\psi,\phi}$ is 
similarly constructed from $Y_{\psi,\phi}$ such that 
$W_{\psi,\phi}$ has good reduction at each finite place $v$ of $K$ 
satisfying $v\nmid 5\phi$ and $\psi^5 \phi-1$ is 
a $v$-adic unit. 
Let us introduce the quintic trinomial 
$$f_{\psi,\phi}(x):=4x^5-5\psi x^4+\phi$$
over $K$ and denote by $K_{f_{\psi,\phi}}$ the 
decomposition field of $f_{\psi,\phi}$ over $K$. 
By replacing $(x,\psi,\phi)$ with $(\frac{1}{x},a,b)$ where  
$(a,b)=(-\frac{4\psi}{5\phi},\frac{4}{\phi})$, 
we have $K_{f_{\psi,\phi}}=K_{g_{a,b}}$ for 
$$g_{a,b}(x)=x^5+ax+b.$$
Let $\br_{\psi,\phi,2}:G_K\lra \GSp_4(\F_2)$ be the mod 2 Galois representation 
associated to $H^3_{\text{\'et}}(Y_{\psi,\phi,\overline{K}},\F_2)$. Note that 
$H^3(Y_{\psi,\phi}(\C),\Z)$ (and also $H^3_{\text{\'et}}(Y_{\psi,\phi,\overline{K}},\Z_2)$) is torsion free since $Y_{\psi,\phi}(\C)$ is 
isomorphic to a smooth fiber of the Dwork quintic family.  
\begin{thm}\label{image-v}
Let $\psi,\phi\in K$ with $\phi(\psi^5 \phi-1)\neq 0$. 
Assume that $f_{\psi,\phi}$ is irreducible over $K$. Then it holds that 
\begin{enumerate}
\item 
${\rm Im}(\br_{\psi,\phi,2})\simeq {\rm Gal}(K_{f_{\psi,\phi}}/K)$ and ${\rm Im}(\br_{\psi,\phi,2})$ contains no element of type $(3,3)$  
under the fixed isomorphism ${\rm GSp}_4(\F_2)\simeq S_6$. 
In particular, the image is regarded as a subgroup of the 5-th symmetric group $S_5$ whose order is divisible by five; 
\item $\br_{\psi,\phi,2}$ is irreducible over $\F_2$ and hence $\br_{\psi,\phi,2}\simeq \br^{{\rm ss}}_{\psi,\phi,2}$. Further, it is absolutely irreducible in which case the image is isomorphic to 
$F_{20}=C_4\ltimes C_5,A_5$ or $S_5$ unless 
the image is isomorphic to $C_5$ or $D_{10}=C_2\ltimes C_5$. 
\end{enumerate}
\end{thm}

\begin{thm}\label{mt-automorphy-v}
Keep the assumption on $f_{\psi,\phi}$ as in Theorem \ref{image-v}.  
Suppose that $K=F$ is a totally real field and $f_{\psi,\phi}$ is irreducible over $F$. 
Assume that ${\rm Gal}(F_{f_{\psi,\phi}}/F)\simeq F_{20},\ A_5$ or $S_5$ and each complex conjugation in ${\rm Gal}(F_{f_{\psi,\phi}}/F)$ 
corresponds to an element of type $(2,2)$. Further assume $[F:\Q]$ is even if ${\rm Gal}(F_{f_{\psi,\phi}}/F)\simeq A_5$. Let $M/F$ be the totally real quadratic extension 
associated to the kernel of $G_F\lra {\rm Im}(\br_{\psi,\phi,2})\lra S_5\stackrel{\sgn}{\lra}\{\pm 1\}$ if ${\rm Gal}(F_{f_{\psi,\phi}}/F)\simeq S_5$ and 
$M=F$ otherwise. Put $d=[M:\Q]$.  
Then there exists a holomorphic Hilbert-Siegel Hecke eigen cusp form $h$ on $\mathcal{H}^d_2$ of parallel weight three such that  $\br_{\psi,\phi,2}|_{G_M}\simeq \br_{h,2}$.  
\end{thm}

\subsection{An example}
Let $(\psi,\phi)=(-\frac{4}{5},-\frac{1}{4})$ which satisfies the condition 
in both of Theorem \ref{image-v} and Theorem \ref{mt-automorphy-v} 
with the Galois group ${\rm Gal}(\Q_{f_{\psi,\phi}}/\Q)\simeq S_5$. 
Notice that $\Q_{f_{\psi,\phi}}=\Q_g$ 
with $g(x)=x^5-x-1$ and the reciprocity of this polynomial has been studied in 
\cite{KRW}. Let $M/\Q$ be the quadratic extension in 
Theorem \ref{mt-automorphy-v}.  
Then $M=\Q(\sqrt{19\cdot 151})$. It follows from 
Theorem \ref{mt-automorphy-v}  that  
a holomorphic Hilbert-Siegel Hecke eigen cusp form $h$ on $\mathcal{H}^2$ of 
weight $(3,3)$ 
which characterizing mod 2 reciprocity of $f_{\psi,\phi}(x)$ (and also of $g(x)$) over $M$.

\end{document}